\documentclass[12pt,a4paper,reqno]{amsart} 
\usepackage[utf8]{inputenc} 
\usepackage[top=3cm, bottom=3.5cm, left=2.5cm, right=2.5cm]{geometry}
\usepackage{amsmath,amsthm,amsfonts,amstext,amssymb}
\usepackage{mathrsfs}
\usepackage{enumerate}
\usepackage{hyperref}
\usepackage{dsfont}
\usepackage{color}
\usepackage{centernot}
\usepackage{graphicx}
\usepackage{xcolor}
\hypersetup{
    colorlinks,
    linkcolor={blue!50!black},
    citecolor={blue!50!black},
    urlcolor={blue!80!black}
}

\usepackage{etoolbox}
\patchcmd{\section}{\scshape}{\bfseries\large}{}{}
\patchcmd{\subsubsection}{\itshape}{}{}{}
\makeatletter
\def\@seccntformat#1{\csname the#1\endcsname.\space}
\makeatother

\newtheorem{theorem}{Theorem}[section]

\newtheorem{lemma}[theorem]{Lemma}
\newtheorem{proposition}[theorem]{Proposition}

\theoremstyle{definition}

\newtheorem{remark}[theorem]{Remark}

\numberwithin{equation}{section}

\usepackage{booktabs}
\usepackage{mathtools}
\newcommand{\C}{\mathbb{C}}
\newcommand{\E}{\mathbb{E}}
\newcommand{\N}{\mathbb{N}}
\renewcommand{\P}{\mathbb{P}}

\newcommand{\R}{\mathbb{R}}
\newcommand{\Z}{\mathbb{Z}}

\newcommand{\cC}{{\ensuremath{\mathcal C}} }

\begin{document}

\title{Critical curve of loop percolation on the $d$-regular tree}

\author[L. Makowiec]{Luca Makowiec}
\address{University of Leipzig\\
Department of Mathematics\\
Augustusplatz 10, 04109 Leipzig, Germany.}
\email{luca.makowiec@uni-leipzig.de}

\thanks{Email: \href{mailto:luca.makowiec@uni-leipzig.de}{\texttt{luca.makowiec@uni-leipzig.de}},
\href{mailto:artem.sapozhnikov@math.uni-leipzig.de}{\texttt{artem.sapozhnikov@math.uni-leipzig.de}}} 
\author[A. Sapozhnikov]{Artem Sapozhnikov}
\address{University of Leipzig\\
Department of Mathematics\\
Augustusplatz 10, 04109 Leipzig, Germany.}
\email{artem.sapozhnikov@math.uni-leipzig.de}

\subjclass[2020]{Primary: 60K35;  Secondary: 60K15, 60J10}

\begin{abstract}
We consider clusters formed by a Poisson ensemble of random walk loops on the $d$-regular tree with an intensity parameter $\alpha>0$ and a killing parameter $\kappa>-1$; the latter penalizes ($\kappa > 0$) or favors ($\kappa <0$) the appearance of large loops. 
We obtain an implicit formula for the critical curve $\kappa\mapsto \alpha_c(\kappa)$ for the percolation phase transition; the curve is positive if and only if $\kappa>\kappa_c = \frac{2\sqrt{d-1}}{d}-1$, differentiable away from $\kappa_c$, and has order $\sqrt{\kappa-\kappa_c}$ as $\kappa\downarrow\kappa_c$ and order $(1+\kappa)^2$ as $\kappa\to\infty$. We show that for each $\kappa>-1$, an infinite cluster exists exactly when $\alpha>\alpha_c(\kappa)$.
Finally, we identify the near-critical behavior of the susceptibility and the percolation probability: for $\kappa>\kappa_c$, the critical exponents take the mean-field values, while for $\kappa=\kappa_c$, the phase transition is of a higher order with the percolation probability decaying quadratically in $\alpha-\alpha_c$.

    \end{abstract}

\maketitle

\section{Introduction}
Let $\mathbb{T}_d$ be the $d$-regular infinite tree.
A (non-trivial discrete) \emph{based loop} on $\mathbb{T}_d$ is a nearest neighbor path $\Dot{\ell} = (v_1, v_2, \ldots, v_n)$ on $\mathbb{T}_d$ with $v_n$ being a neighbor of $v_1$. 
The equivalence classes of based loops modulo cyclic shifts of their vertices are called \emph{loops}. 
For $\kappa > -1$, consider the measure $\Dot{\mu}_\kappa(\cdot)$ on the space of based loops that gives weight 
\[
\Dot{\mu}_\kappa(\Dot{\ell}) = \frac{1}{n}\Big(\frac{1}{1+\kappa}\Big)^n \Big(\frac{1}{d}\Big)^n
\]
to each based loop $\Dot{\ell} = (v_1, v_2, \ldots, v_n)$, and denote by $\mu_\kappa$ the pushforward of $\Dot{\mu}_\kappa$ on the space of loops. 
We are interested in the Poisson ensembles of loops $\mathcal{L}_{\alpha, \kappa}$ with intensity $\alpha \mu_\kappa$, where 
$\alpha\geq 0$ governs the amount of loops entering the picture and $\kappa>-1$ plays the role of killing on vertices penalizing ($\kappa>0$) or favoring ($\kappa<0$) appearance of large loops. 
(The special case of $\kappa=0$ is known as the random walk loop soup.)
We write $\P_{\alpha,\kappa}$ for the law of $\mathcal{L}_{\alpha,\kappa}$.

\smallskip

In this paper, we study percolation of loops in $\mathcal{L}_{\alpha,\kappa}$. 
We say that a loop $\ell$ is \emph{open} if it is present at least once in $\mathcal{L}_{\alpha,\kappa}$.
Two vertices $u$ and $v$ are loop-connected if there exists a sequence of open loops $(\ell_1,\ldots,\ell_k)$ such that $u$ is a vertex of $\ell_1$, $v$ is a vertex of $\ell_k$, and each pair of loops $\ell_{i-1}, \ell_i$ of the sequence shares a vertex. 
We write $\mathcal C(u)$ for the open cluster of $u$, that is, the set of all vertices loop-connected to $u$, and consider the percolation probability 
\[
\theta(\alpha, \kappa) = \P_{\alpha,\kappa}\big(|\mathcal C(0)| = \infty\big),
\]
where $0$ is a fixed vertex of $\mathbb{T}_d$ (called the root or the origin). 
We are interested in the \emph{critical curve} for the percolation phase transition
\[
\alpha_c(\kappa) = \inf\{ \alpha \geq 0 : \theta(\alpha,\kappa) > 0\},\quad \kappa>-1.
\]
From the monotonicity of the intensity measure of $\mathcal{L}_{\alpha,\kappa}$ in $\alpha$ and $\kappa$, it follows that $\theta(\alpha,\kappa)$ is a non-decreasing function of $\alpha$ and a non-increasing function of $\kappa$. 
In particular, $\alpha_c(\kappa)$ is non-decreasing in $\kappa$, and for each $\kappa$, $\theta(\alpha,\kappa)>0$ if $\alpha>\alpha_c(\kappa)$ and $\theta(\alpha,\kappa)=0$ if $\alpha<\alpha_c(\kappa)$. 

\smallskip

The loop percolation induced by $\mathcal{L}_{\alpha,\kappa}$ dominates stochastically the Bernoulli bond percolation on $\mathbb{T}_d$ induced by the restriction of $\mathcal{L}_{\alpha,\kappa}$ to the loops of length $2$, from which it follows that $\alpha_c(\kappa)<\infty$ for all $\kappa$, see \cite[Proposition~4.3]{LJL13}. 
We emphasize however that for $\kappa\leq 0$, the loop percolation behaves differently from the Bernoulli percolation due to its long-range correlations, see e.g.\ Remark~\ref{rem:longrange}. In particular, verifying if $\alpha_c(\kappa)$ is strictly positive becomes a non-trivial task; see \cite{CS16} for the proof that $\alpha_c(\kappa)>0$ iff $\kappa\geq 0$ for the loop percolation on $\mathbb Z^d$. 

\smallskip

\subsection{Results}
Our main result gives an implicit characterization of $\alpha_c(\kappa)$ as a unique solution to some equation. Let 
\begin{equation}\label{def:kappa-c}
\kappa_c = \frac{2\sqrt{d-1}}{d} - 1
\end{equation}
and for $\kappa\geq \kappa_c$, define
\begin{equation}\label{def:tau}
\tau_\kappa=  \frac{d (1+\kappa) - \sqrt{d^2 (1+\kappa)^2 - 4(d-1)}}{2(d-1)}.
\end{equation}
Note that $\tau_\kappa \in \big(0, \frac{1}{\sqrt{d-1}}\big]$, $\tau_{\kappa_c} = \frac{1}{\sqrt{d-1}}$ and $\tau_\kappa\downarrow 0$ as $\kappa\uparrow\infty$. 

\begin{theorem} \label{thm:alpha-c-kappa}
The critical value $\alpha_c(\kappa)$ is positive if and only if $\kappa>\kappa_c$. Furthermore, for each $\kappa>\kappa_c$, 
$\alpha_c(\kappa)$ is the unique solution of $\alpha > 0$ to the equation \begin{equation}\label{eq:alpha-c-kappa}
\sum_{n \geq 0} \Big( \big(1-\tau_\kappa^{2(n+1)}\big)^{-\alpha} -1 \Big) (d-1)^n  = \frac{1}{d-2}.
\end{equation}
\end{theorem}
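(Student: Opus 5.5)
The plan is to reduce loop percolation on $\mathbb{T}_d$ to a branching process, using that on a tree the range of every loop is a finite subtree. The first step is to compute loop masses in terms of $\tau_\kappa$. The quantity $\tau_\kappa$ is the smaller root of $(d-1)\tau^2 - d(1+\kappa)\tau + 1 = 0$. It is therefore the generating function of first-passage to a neighbour for the walk killed at rate $\kappa$: the probability that the killed walk ever hits a fixed neighbour is $\tau_\kappa$ (up to a routine normalisation). From this I would show, using the standard loop-measure identity $\mu_\kappa(\text{loops visiting } u \text{ and } w) = -\log(1 - \text{(return-type quantity)})$, that for $u,w$ at distance $k$
\[
\mu_\kappa\big(\ell \text{ visits both } u \text{ and } w\big) = -\log\big(1-\tau_\kappa^{2k}\big).
\]
The geometric meaning is that a loop must cross the geodesic in both directions. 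Consequently
\[
\P_{\alpha,\kappa}\big(\text{no open loop covers the geodesic } [u,w]\big) = \big(1-\tau_\kappa^{2k}\big)^{\alpha}.
\]
This explains the terms $(1-\tau_\kappa^{2(n+1)})^{-\alpha}-1$ in \eqref{eq:alpha-c-kappa}.

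Next I would explore $\mathcal C(0)$ as a (multi-type) Galton–Watson process. A vertex $v$ reached via its parent edge produces as offspring the vertices of the forward subtrees that are reached through open loops whose range contains $v$ but not the parent of $v$. Such loops are independent of everything already revealed, by the Poisson property and by the disjointness of the relevant loop families on a tree. Conditioning on which loops cover the parent edge is handled by the ratio $(1-\tau^{2k})^{-\alpha}$. This ratio comes from comparing the loops that cover $[v,w]$ with those that are additionally forced through the parent. The mean number of new "branch vertices" at distance $n+1$ in a fixed direction is then $\big((1-\tau_\kappa^{2(n+1)})^{-\alpha}-1\big)(d-1)^n$. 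Here $(d-1)^n$ counts the forward paths and the $d-2$ accounts for the sibling directions available at a branching point. The criticality condition, mean offspring equal to $1$, becomes \eqref{eq:alpha-c-kappa}. The main obstacle is this step: choosing the types and the stopping rule so that the offspring are genuinely independent and the mean comes out exactly as stated. I would first try typing each vertex by the edge through which it was entered, and revealing only loops that avoid already explored vertices. Once the process is correctly set up, $\theta>0$ iff the mean exceeds $1$ follows from standard Galton–Watson theory, provided I check irreducibility and a finite variance (or a truncation argument) at the boundary.

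Uniqueness and positivity come next. For $\kappa>\kappa_c$ we have $\tau_\kappa<1/\sqrt{d-1}$, so $\tau_\kappa^{2(n+1)}(d-1)^n$ is summable. The left-hand side of \eqref{eq:alpha-c-kappa} is then finite, continuous and strictly increasing in $\alpha$, vanishes at $\alpha=0$, and tends to $\infty$ as $\alpha\to\infty$. Hence there is a unique root, and it is positive. For $\kappa=\kappa_c$, each term of the sum is $\sim \alpha\,\tau^{2(n+1)}(d-1)^n = \alpha/(d-1)$, so the sum diverges for every $\alpha>0$ and $\alpha_c(\kappa_c)=0$. For $\kappa<\kappa_c$, the quantity $1/(1+\kappa)$ exceeds the inverse spectral radius $d/(2\sqrt{d-1})$. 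The total mass of loops through $0$ is then infinite, so infinitely many loops through $0$ occur and almost surely some of them are arbitrarily long. This gives $|\mathcal C(0)|=\infty$ for every $\alpha>0$. Monotonicity in $\kappa$ also yields $\alpha_c(\kappa)=0$ for $\kappa<\kappa_c$ directly from the case $\kappa=\kappa_c$.
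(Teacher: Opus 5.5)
\textbf{There is a genuine gap.} It is exactly the step you flag as ``the main obstacle'': you never construct the Galton--Watson process, yet every conclusion is read off from it. That includes the claim that \eqref{eq:alpha-c-kappa} gives the threshold, that $\theta>0$ above it and $\theta=0$ below it, and that $\alpha_c(\kappa_c)=0$.

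Such a process is not available in the form you describe, for two reasons.
\begin{enumerate}
\item The loops crossing the edge into a vertex $v$ continue below $v$ to random depths and into several of the $d-1$ forward directions. So the conditional law of the part of $\cC(0)$ below $v$ depends on that whole configuration, not only on the edge through which $v$ was entered.
\item If you reveal only loops avoiding the explored set, the law of the loops attached to $v$ depends on the shape of the explored set around $v$.
\end{enumerate}
The types are therefore infinite-dimensional and offspring of different vertices are not independent, so standard Galton--Watson theory cannot be invoked. The offspring mean is also only asserted. The quantity $(1-\tau_\kappa^{2(n+1)})^{-\alpha}-1$ is not a probability (it exceeds $1$ for large $\alpha$), and the factor $d-2$ is not derived. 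What does hold is an algebraic first-moment renewal identity, $Q(0\leftrightarrow x_r)=\sum_{n=1}^r(q_{n-1}-q_n)Q(0\leftrightarrow x_{r-n})$, under the law conditioned on $(x_{-1},x_0)$ being closed. It says nothing about survival.

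\textbf{How the paper proceeds.} It separates a first-moment statement from a second-moment one.
\begin{itemize}
\item First moment. On a tree, loop-connectivity coincides with connectivity through open edges. Hence the closed edges along a ray form a renewal process with explicit renewal probabilities $q_n=(1-\tau_\kappa^2)^\alpha(1-\tau_\kappa^{2(n+1)})^{-\alpha}$. Singularity analysis of $\sum_n q_nz^n$ then gives $\P(0\leftrightarrow x_r)\sim\psi\lambda^r$. This needs the partial-fraction representation to exclude complex zeros in $B_{1/\tau_\kappa^2}$. It follows that $\E|\cC(0)|<\infty$ iff $\lambda(d-1)<1$, and setting $\lambda=1/(d-1)$ in the equation for $\lambda$ is exactly \eqref{eq:alpha-c-kappa}.
\item Second moment. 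Theorem~\ref{thm:sharpness} supplies the converse. If $\lambda(d-1)>1$, Lemma~\ref{L:3_cluster} gives the three-point bound $\P(u,v\in\cC(0))\le D\lambda^{\frac12(d(u,v)+d(0,u)+d(0,v))}$, using $\lambda>\tau_\kappa^2$. Combined with Paley--Zygmund applied to $|N_r|$, this yields $\theta>0$.
\end{itemize}

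Even if your branching heuristic were made into an exact computation of $\E|\cC(0)|$, without such a second-moment input it would give only $\alpha_c\ge\alpha_\#$. In particular, at $\kappa=\kappa_c$ the divergence of the sum shows only $\E|\cC(0)|=\infty$, not $\theta>0$.

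\textbf{What does work.} Your uniqueness and positivity argument for the root of \eqref{eq:alpha-c-kappa} is fine. For $\kappa<\kappa_c$, your monotonicity argument suffices. Your alternative argument there (``arbitrarily long loops through $0$'') does not, since a loop can be long in time while having small range.
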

\begin{remark}
Note that $2\sqrt{d-1}/d$, appearing in \eqref{def:kappa-c}, is the spectral radius of the random walk on $\mathbb{T}_d$, $\rho = \lim_{n\to \infty} p_{2n}(0,0)^{\frac{1}{2n}}$, where $p_n(\cdot,\cdot)$ is the $n$-step transition probability of the random walk, see the proof of Lemma~\ref{l:greens_f}. 
\end{remark}
Although the solution to \eqref{eq:alpha-c-kappa} is not explicit, the formula is sufficient to establish basic regularity of $\alpha_c(\kappa)$ and derive its asymptotic expansions as $\kappa \downarrow \kappa_c$ and $\kappa \to \infty$.
\begin{figure}[t]
    \centering
    \includegraphics[width=0.7\textwidth]{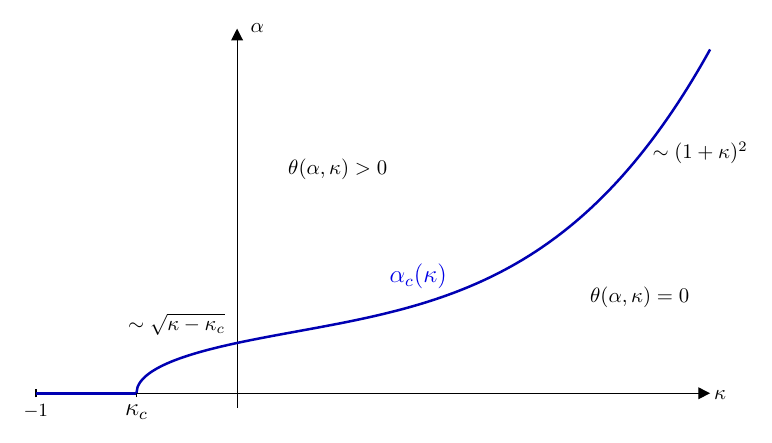}
    \caption{The critical curve $\alpha_c(\kappa)$. Above the curve there is an infinite cluster, while on and below the curve there is no infinite cluster.}
    \label{fig:crit_curve}
\end{figure}

\begin{theorem}\label{thm:regularity}
The critical curve $\kappa\mapsto \alpha_c(\kappa)$ is continuous, differentiable at $\kappa \neq \kappa_c$, and monotone increasing for $\kappa \geq \kappa_c$. Furthermore, 
 \begin{equation}\label{eq:asymptotic-kappac}
\alpha_c(\kappa) = \frac{2\sqrt{d}(d-1)^{3/4}}{d-2} \sqrt{\kappa - \kappa_c} + O_d(\kappa - \kappa_c),\quad\text{as }\kappa \downarrow \kappa_c
\end{equation}
and 
\begin{equation}\label{eq:asymptotic-infty}
\alpha_c(\kappa) = d^2 \log \bigg( \frac{d-1}{d-2} \bigg)(1+\kappa)^2 + O_d(1),\quad\text{as }\kappa\to\infty.
\end{equation}
\end{theorem}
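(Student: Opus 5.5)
We work throughout with the characterization of Theorem~\ref{thm:alpha-c-kappa}. Write $x=\tau_\kappa^2\in(0,\frac{1}{d-1}]$ and
\[
F(\alpha,x)=\sum_{n\ge 0}\big((1-x^{n+1})^{-\alpha}-1\big)(d-1)^n .
\]
For $\kappa>\kappa_c$ the value $\alpha_c(\kappa)$ is the unique positive root of $F(\alpha,\tau_\kappa^2)=\frac{1}{d-2}$.

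\textbf{Analytic properties of $F$.} We first check three things.
\begin{itemize}
\item[(i)] For $x<\frac1{d-1}$ the series converges locally uniformly in $\alpha$ together with its derivatives, since $(1-x^{n+1})^{-\alpha}-1\sim \alpha x^{n+1}$. So $F$ is $C^1$ on $(0,\infty)\times(0,\frac1{d-1})$.
\item[(ii)] $F$ is strictly increasing in both variables, and $\partial_\alpha F>0$.
\item[(iii)] $F(0,x)=0$ and $F(\alpha,x)\to\infty$ as $\alpha\to\infty$.
\end{itemize}

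\textbf{Continuity, monotonicity and differentiability for $\kappa>\kappa_c$.} The map $\kappa\mapsto\tau_\kappa$ is smooth and strictly decreasing on $(\kappa_c,\infty)$, with derivative blowing up only at $\kappa_c$. The implicit function theorem then shows that $\alpha_c$ is $C^1$ on $(\kappa_c,\infty)$. Strict monotonicity follows because decreasing $x$ lowers $F$, so a larger $\alpha$ is needed to reach the level $\frac{1}{d-2}$.

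\textbf{Continuity at $\kappa_c$ and the region $\kappa<\kappa_c$.} For $\kappa\le\kappa_c$ we have $\alpha_c=0$, so $\alpha_c$ is differentiable there. Continuity at $\kappa_c$ will come out of the asymptotics below, which show $\alpha_c(\kappa)\to0$ as $\kappa\downarrow\kappa_c$.

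\textbf{Asymptotics as $\kappa\downarrow\kappa_c$.} This is the main step. Set $\kappa=\kappa_c+\varepsilon$. A Taylor expansion of the square root in \eqref{def:tau} gives
\[
\tau_\kappa=\frac{1}{\sqrt{d-1}}-c\sqrt\varepsilon+O(\varepsilon),\qquad c=\frac{\sqrt{2d\sqrt{d-1}}}{d-1}\cdot\frac{1}{\sqrt{d-1}}\ \text{(to be computed)}.
\]
Hence $(d-1)\tau_\kappa^2=1-\delta$ with $\delta\asymp\sqrt\varepsilon$. Because $\alpha$ will turn out small, we linearize:
\[
F\approx\alpha\sum_n -\log(1-x^{n+1})\,(d-1)^n\approx \alpha\sum_n x^{n+1}(d-1)^n=\frac{\alpha x}{1-(d-1)x}=\frac{\alpha}{(d-1)\delta}.
\]
Setting this equal to $\frac1{d-2}$ gives $\alpha\approx\frac{(d-1)\delta}{d-2}$, of order $\sqrt{\varepsilon}$. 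Tracking constants should reproduce $\frac{2\sqrt d(d-1)^{3/4}}{d-2}$. The error terms must be shown to be $O(\varepsilon)$. There are two of them.
\begin{itemize}
\item The nonlinear term in $\alpha$ is $O(\alpha^2\sum_n x^{2(n+1)}(d-1)^n)=O(\alpha^2)$, since $x^2(d-1)\le\frac1{d-1}<1$.
\item The $\log$ correction, $-\log(1-y)-y=O(y^2)$, contributes $O(\alpha)$ times a convergent sum. This only perturbs the equation to $\alpha/((d-1)\delta)+O(\alpha)=\frac1{d-2}$, giving $\alpha=\frac{(d-1)\delta}{d-2}(1+O(\delta))$.
\end{itemize}
Together with $\delta=2c\sqrt{d-1}\sqrt\varepsilon+O(\varepsilon)$, this yields \eqref{eq:asymptotic-kappac}. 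The delicate point is obtaining these bounds uniformly as $x\uparrow\frac1{d-1}$. For that we will use the elementary inequality $0\le(1-y)^{-\alpha}-1-\alpha y\le C\alpha(\alpha+1)y^2$ for $y\le\frac12$, bounded $\alpha$.

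\textbf{Asymptotics as $\kappa\to\infty$.} Here $\tau_\kappa=\frac{1}{d(1+\kappa)}+O((1+\kappa)^{-3})$, so $x\to0$ while $\alpha\to\infty$. The $n=0$ term $(1-x)^{-\alpha}-1$ dominates. The term $n=1$ is $(1-x^2)^{-\alpha}-1\approx\alpha x^2$, which is small provided $\alpha x$ stays bounded, and the terms $n\ge2$ are smaller still. So the equation reduces to $(1-x)^{-\alpha}=\frac{d-1}{d-2}+o(1)$, that is,
\[
\alpha=\frac{\log\frac{d-1}{d-2}+O(x)}{-\log(1-x)}=\frac{\log\frac{d-1}{d-2}}{x}+O(1).
\]
Since $1/x=d^2(1+\kappa)^2+O(1)$, this gives \eqref{eq:asymptotic-infty}. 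To justify it rigorously we first need an a priori bound $\alpha x=O(1)$. This follows from the $n=0$ term alone, since $F\ge(1-x)^{-\alpha}-1\ge e^{\alpha x}-1$. With that bound in hand, all the neglected terms are $O(x)$.
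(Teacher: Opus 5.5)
Your proposal is correct and follows the paper's proof closely. Regularity away from $\kappa_c$ comes from the implicit function theorem with $\partial_\alpha F>0$ and monotonicity in $\tau_\kappa^2$. Near $\kappa_c$ you use the two-sided bounds $\alpha y\le(1-y)^{-\alpha}-1\le\alpha y+C\alpha y^2$, where the lower bound also gives the a priori estimate $\alpha_c=O(\sqrt{\varepsilon})$. As $\kappa\to\infty$ you use dominance of the $n=0$ term together with $\alpha\tau_\kappa^2\le\log\frac{d-1}{d-2}$.

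The only correction concerns your provisional constant: it should be $c=\sqrt{d}\,(d-1)^{-3/4}$, giving $\delta=1-(d-1)\tau_\kappa^2=2\sqrt{d}\,(d-1)^{-1/4}\sqrt{\varepsilon}+O(\varepsilon)$ and hence the stated leading coefficient $\frac{2\sqrt{d}(d-1)^{3/4}}{d-2}$. The expression you wrote is off by a factor $\sqrt{2/(d-1)}$.
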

Figure~\ref{fig:crit_curve} illustrates Theorem~\ref{thm:regularity} and indicates the percolative and non-percolative regimes. 

The formula \eqref{eq:alpha-c-kappa} is also amenable to numerical simulation of $\alpha_c(\kappa)$. Table~\ref{tab:d_alpha} provides some numerical approximations to $\alpha_c(0)$ for different $d$'s; in particular, it clearly suggests that $\alpha_c(0)$ grows as $d-\frac32$, for $d\to\infty$. 
\begin{table}[t]
    \centering
    \begin{tabular}{c|c}

$d$ & $\alpha_c(0)$ \\
\midrule
$3$ & $1.593856$  \\
$4$ & $2.576890$  \\
$5$ & $3.563572$  \\
$6$ & $4.553838$ \\
$7$	& $5.546569$ \\
$10$ & $8.532992$  \\
$50$ & $48.506672$ \\
$100$ & $98.503332$  \\
$200$ & $198.501693$	
\end{tabular}
    \caption{Numerical approximations to the solution of \eqref{eq:alpha-c-kappa} for $\kappa = 0$.}
    \label{tab:d_alpha}
\end{table}
In the next proposition, we give the asymptotic expansion of $\alpha_c(\kappa)$ as $d\to\infty$ for any $\kappa>-1$. It is a simple consequence of the asymptotic analysis of \eqref{eq:alpha-c-kappa}, and we leave its details to the interested reader. 
\begin{proposition}\label{prop:asymptotic-d}
For any $\kappa > -1$,
\[
\alpha_c(\kappa) = (1+\kappa)^2(d+\frac{3}{2}) -3 + O \Big( \frac{(1+\kappa)^2}{d} \Big), \quad \text{as }d\to\infty.
\]
\end{proposition}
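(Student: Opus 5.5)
Since the left-hand side of \eqref{eq:alpha-c-kappa} is strictly increasing in $\alpha$ and, by Theorem~\ref{thm:alpha-c-kappa}, $\alpha_c(\kappa)$ is its unique positive root, I will argue by sandwiching. Fix $\kappa>-1$ and note that $\kappa_c=\frac{2\sqrt{d-1}}{d}-1\to-1$, so $\kappa>\kappa_c$ for all large $d$. Set
\[
\alpha_\pm=(1+\kappa)^2\big(d+\tfrac32\big)-3\pm C\,\frac{(1+\kappa)^2}{d}.
\]
Write $F_d(\alpha)$ for the left-hand side of \eqref{eq:alpha-c-kappa}. It then suffices to show that, for a suitable $C$ and all large $d$,
\[
F_d(\alpha_-)<\frac{1}{d-2}<F_d(\alpha_+).
\]
Both inequalities follow from an asymptotic expansion of $F_d(\alpha)$, uniform over $\alpha=(1+\kappa)^2 d\,(1+O(1/d))$, with error $O(d^{-3})$. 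Indeed, $\partial_\alpha F_d\asymp 1/\big(d^2(1+\kappa)^2\big)$ in that range, so changing $\alpha$ by $C(1+\kappa)^2/d$ changes $F_d$ by about $C/d^3$. This beats the $O(d^{-3})$ error once $C$ is large.

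First I expand $\tau_\kappa$. With $a=d(1+\kappa)$ we have $\tau_\kappa=2/\big(a+\sqrt{a^2-4(d-1)}\big)$, hence
\[
\tau_\kappa=\frac1a\Big(1+\frac{d-1}{a^2}+O(d^{-2})\Big),
\qquad
x:=\tau_\kappa^2=\frac{1}{d^2(1+\kappa)^2}\Big(1+\frac{2}{d(1+\kappa)^2}+O(d^{-2})\Big).
\]
Next I expand $F_d(\alpha)=\sum_{n\ge0}\big((1-x^{n+1})^{-\alpha}-1\big)(d-1)^n$ in the regime $\alpha x\asymp 1/d$ and $xd\asymp 1/d$.

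\begin{itemize}
\item The $n=0$ term is $\alpha x+\tfrac12\alpha(\alpha+1)x^2+O\big((\alpha x)^3\big)$, and $(\alpha x)^3=O(d^{-3})$.
\item The $n=1$ term is $(d-1)\alpha x^2+O(d^{-4})$.
\item For $n\ge2$, use $(1-y)^{-\alpha}-1\le \alpha y(1-y)^{-\alpha-1}$. The tail is then bounded by $O(1)\cdot\alpha x\sum_{n\ge2}(xd)^n=O(d^{-3})$.
\end{itemize}

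Therefore
\[
F_d(\alpha)=\alpha x+\tfrac12\alpha^2x^2+(d-1)\alpha x^2+O(d^{-3}).
\]
The term $\tfrac12\alpha x^2$ is $O(d^{-3})$ and is absorbed into the error.

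Finally I plug in $\alpha=(1+\kappa)^2(d+c_1)+c_0$ together with the expansion of $x$. I match this against $\frac1{d-2}=\frac1d+\frac2{d^2}+\frac4{d^3}+\dots$ at orders $d^{-1}$, $d^{-2}$ and $d^{-3}$. The order $d^{-1}$ holds automatically, since $\alpha x\approx 1/d$ to leading order. The $d^{-2}$ coefficient fixes $c_1$, which should come out as $3/2$. The $d^{-3}$ coefficient fixes $c_0$, which should come out as $-3$, and this is the $-3$ in the statement.

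The main obstacle is bookkeeping rather than conceptual difficulty. To get $c_0$ right, the expansion of $x$ must be carried to relative order $d^{-2}$, i.e.\ one order beyond what is displayed above. Every contribution at absolute order $d^{-3}$ has to be tracked consistently: the cubic part of the $n=0$ term, the $x^2$-correction inside $\alpha x^2(d-1)$, the $n=2$ term $\alpha x^3(d-1)^2$, and the higher-order terms of $\tau_\kappa$. Only what lies strictly beyond $d^{-3}$ may be discarded.

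I would carry out this computation symbolically with $\epsilon=1/d$ and $s=(1+\kappa)^{-2}$ as bookkeeping variables. One can check a posteriori that the resulting $c_0$ does not depend on $\kappa$, and that the remainder is of size $(1+\kappa)^2/d$ for $\kappa$ fixed, which matches the form of the error term in Proposition~\ref{prop:asymptotic-d}.
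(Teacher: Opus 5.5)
Your architecture is the right one, and it is what the paper means by leaving the ``asymptotic analysis of \eqref{eq:alpha-c-kappa}'' to the reader. It consists of monotonicity in $\alpha$, an expansion of $F_d$ with error $O(d^{-3})$, and $\partial_\alpha F_d\ge x$ to turn that error into $O((1+\kappa)^2/d)$ in $\alpha$. Your bounds on the $n=0$, $n=1$ and $n\ge2$ terms are also fine.

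The step that fails is the matching, because you misplace the order at which $c_0$ enters. Put $t=(1+\kappa)^{-2}$, so that $x=\frac{t}{d^2}\big(1+\frac{2t}{d}+O(d^{-2})\big)$. A constant shift $c_0$ in $\alpha$ changes $\alpha x$ by about $c_0t/d^2$, which is the \emph{same} order as the effect of $c_1$. Write $\alpha=d/t+\beta$ with $\beta=c_1/t+c_0$. Your own formula then gives
\[
F_d(\alpha)=\frac1d+\frac{(3+\beta)t+\frac12}{d^2}+O(d^{-3}).
\]
So the $d^{-2}$ equation $(3+\beta)t+\frac12=2$ already yields $\beta=\frac32(1+\kappa)^2-3$, which is exactly the constant term in the statement.

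Your plan goes wrong in two places. First, if you let the $d^{-2}$ coefficient fix $c_1$ with $c_0$ deferred, you get $c_1=\frac32-3t$, not $\frac32$. Second, the $d^{-3}$ coefficient cannot fix $c_0$. At $\alpha=(1+\kappa)^2(d+\frac32)-3$ this coefficient equals $t^2+t+\frac53$ rather than $4$; for example it is $\frac{11}{3}$ at $\kappa=0$. The mismatch reflects a genuine $1/d$ correction to $\alpha_c$, which is visible in Table~\ref{tab:d_alpha}: there $\alpha_c(0)-(d-\frac32)\approx\frac{1}{3d}$. Matching at order $d^{-3}$ with your two-parameter ansatz is therefore overdetermined and would fail.

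The repair simplifies your plan: stop at order $d^{-2}$. This is exactly what your own error budget requires, since an $O(d^{-3})$ error in $F_d$ corresponds to $O((1+\kappa)^2/d)$ in $\alpha$. The expansion of $x$ to relative order $d^{-1}$ that you already displayed suffices, so you do not need relative order $d^{-2}$. All the $d^{-3}$ contributions you list go into the remainder and need not be tracked. These are the cubic part of the $n=0$ term, $\frac12\alpha x^2$, the $x^2$-correction in $(d-1)\alpha x^2$, the $n=2$ term, and higher corrections to $\tau_\kappa$.
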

\begin{remark}
For the loop percolation on $\Z^d$, it was proven in \cite[Theorem~1.7]{CS16} that $2d-6 + O(d^{-1})\leq \alpha_c(0)\leq 2d+\frac32 + O(d^{-1})$, as $d\to\infty$. In view of Proposition~\ref{prop:asymptotic-d}, it is natural to conjecture that on $\Z^d$, $\alpha_c(0) = 2d-\frac32 + O(d^{-1})$, as $d\to\infty$. 
\end{remark}
Using some standard results on insertion-tolerant percolation on non-amenable graphs, we also prove that the percolation probability is zero on the critical curve. 

\begin{theorem} \label{T:no_percolation}
For all $\kappa>-1$, 
\[
\theta(\alpha_c(\kappa), \kappa) = 0.
\]
\end{theorem}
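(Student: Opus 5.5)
Two cases. If $\kappa\le\kappa_c$, then $\alpha_c(\kappa)=0$ by Theorem~\ref{thm:alpha-c-kappa}, and at $\alpha=0$ the ensemble is empty. Every cluster is then a single vertex, so $\theta(0,\kappa)=0$ trivially. The real content is the case $\kappa>\kappa_c$, where $\alpha_c(\kappa)>0$. There I plan to run the standard argument for insertion-tolerant, automorphism-invariant percolation on nonamenable graphs (Benjamini--Lyons--Peres--Schramm): at criticality there are almost surely no infinite clusters.

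First I set up the loop clusters as an edge percolation $\omega$ on $\mathbb{T}_d$: declare an edge open if it is traversed by some open loop. On a tree, the vertex set of a loop is exactly the set of endpoints of its traversed edges, and that set is connected. Hence the components of $\omega$ coincide with the loop clusters, apart from isolated vertices, which are irrelevant for infinite clusters.

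Next I check the hypotheses for $\omega$. It is invariant under the full automorphism group of $\mathbb{T}_d$, which is unimodular and transitive, because $\mu_\kappa$ is invariant. It is insertion tolerant, since for a fixed edge $e$ we may add the length-2 loop on $e$: this loop has positive $\mu_\kappa$-mass, and the Poisson property gives absolute continuity of the modified law. It also satisfies the positive-association properties of a Poisson process, namely Harris--FKG. Moreover, the graph is nonamenable. For a unimodular transitive nonamenable graph and an invariant percolation, BLPS proved that the expected degree in a finite cluster is bounded above by $d-\Phi_E$, where $\Phi_E>0$ is the edge-isoperimetric constant. They also proved that invariant insertion-tolerant percolations have either no infinite cluster or infinitely many.

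The main step is ruling out percolation at $\alpha_c$. Suppose $\theta(\alpha_c,\kappa)>0$. I would use the natural monotone coupling in $\alpha$: Poisson loop processes $\mathcal{L}_{\alpha,\kappa}$, nondecreasing in $\alpha$, built from a single Poisson process on loops$\times[0,\infty)$. I would then imitate the BLPS proof of "no percolation at $p_c$" for Bernoulli percolation on nonamenable unimodular graphs. One route is through invasion-type arguments. The other, which I prefer, is the mass-transport argument in BLPS Theorem~1.1 / Corollary~1.2 of \emph{Critical percolation on any nonamenable group has no infinite clusters}. That argument is not specific to Bernoulli percolation; it applies to monotone-coupled invariant families under suitable conditions. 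At $\alpha_c$ the configuration would have infinitely many infinite clusters, and hence each infinite cluster has infinitely many ends and branching forests. Removing an independent, slightly smaller Poisson sprinkling, i.e.\ viewing $\mathcal{L}_{\alpha_c}$ as $\mathcal{L}_{\alpha_c-\varepsilon}$ together with an independent $\varepsilon$-ensemble, should leave an infinite cluster with positive probability. That would contradict $\theta(\alpha,\kappa)=0$ for $\alpha<\alpha_c$.

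The genuine difficulty is the finite-energy step of this deletion. A single loop can be long, so loops of $\mathcal{L}_{\alpha_c}\setminus\mathcal{L}_{\alpha_c-\varepsilon}$ can be unbounded. I would handle it by truncation. Condition on the $\varepsilon$-loops meeting a large ball. Since $\mu_\kappa$ has finite mass per vertex for $\kappa>\kappa_c$, loops through a vertex have exponential tails. The infinite clusters of the truncated process must then still contain the infinite forest structure found in the BLPS mass-transport (the invariant spanning forest with $>2$ ends per tree). Alternatively, for a tree one can argue directly. The cluster of $0$ is dominated by a multitype branching process whose offspring generating function is analytic in $\alpha$; this structure is presumably what underlies \eqref{eq:alpha-c-kappa}. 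At $\alpha_c$ the mean is exactly $1$ and the variance is finite, because loop lengths have exponential tails when $\kappa>\kappa_c$ (as $\tau_\kappa<1/\sqrt{d-1}$). A critical branching process with finite variance dies out, which gives $\theta(\alpha_c(\kappa),\kappa)=0$. I would try this branching-process route first, since it only requires checking that the mean offspring at $\alpha_c$ equals one, which is precisely \eqref{eq:alpha-c-kappa}.
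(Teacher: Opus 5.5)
Your case $\kappa\le\kappa_c$ is fine, but for $\kappa>\kappa_c$ there is a genuine gap: none of your three routes is carried through, and one input is misquoted.

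\emph{Uniqueness is not excluded.} Insertion tolerance together with ergodicity only gives $N\in\{0,1,\infty\}$ for the number $N$ of infinite clusters. It does not rule out $N=1$; this very model has $N=1$ for $\kappa<\kappa_c$, cf.\ Remark~\ref{rem:loops-cover-T}. So your assertion that at $\alpha_c$ there would be infinitely many infinite clusters needs its own argument. With the Harris--FKG inequality you listed, $N=1$ would give $\P_{\alpha,\kappa}(0\leftrightarrow v)\ge\theta(\alpha,\kappa)^2$ for all $v$. This contradicts $\P_{\alpha,\kappa}(0\leftrightarrow x_r)=\psi\lambda^r(1+o(1))\to0$ from Theorem~\ref{T:two_point_order}.

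\emph{The sprinkling step is open.} You flag this step yourself as the crux, and it is left unresolved. In BLPS the deletion works because independent Bernoulli thinning of Bernoulli$(p_c)$ is again Bernoulli, and an invariant forest $F$ with $p_c(F)<1$ survives it. Removing the $\varepsilon$-loops instead deletes edges in a correlated way with unbounded range. Your truncation remark does not show that an infinite component remains.

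\emph{The branching process is never constructed.} Loops through an ancestor can enter several subtrees, so the ``offspring'' of distinct vertices overlap and are dependent. The natural dominating Galton--Watson process lets each vertex beget the vertices sharing an open loop with it; it is used with a diameter cutoff in the proof of the upper bound in \eqref{eq:beta-crit}. Its mean is $\sum_{r\ge1}d(d-1)^{r-1}\big(1-(1-\tau_\kappa^{2r})^{\alpha}\big)$, which at $\alpha=\alpha_c(\kappa)$ tends to $d/(d-2)>1$ as $\kappa\downarrow\kappa_c$. So domination gives nothing at criticality. Also, \eqref{eq:alpha-c-kappa} is not a mean-offspring identity. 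It is the condition $(d-1)\lambda(\alpha,\kappa)=1$ on the decay rate of the two-point function, obtained from the renewal process of closed edges along a single ray.

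What is missing is that your heuristic ``a critical branching process dies out'' can be implemented with first moments alone, and this is what the paper does. Let $N_r$ be the set of vertices of $\partial B(r)$ connected to $0$.
\begin{itemize}
\item Since $\alpha_c(\kappa)=\alpha_\#(\kappa)$ (Theorem~\ref{thm:sharpness}) and $(d-1)\lambda(\alpha_\#(\kappa),\kappa)=1$, Theorem~\ref{T:two_point_order} gives $\sup_r\E_{\alpha_c(\kappa),\kappa}\big[|N_r|\big]<\infty$.
\item Once $N=1$ is excluded as above, $N\in\{0,\infty\}$. If $N=\infty$, every infinite cluster has infinitely many ends \cite[Proposition~8.32]{LP16}. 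On the tree this forces $|N_r|\to\infty$ on $\{|\cC(0)|=\infty\}$, the analogue of ``a surviving Galton--Watson process explodes''.
\item Fatou's lemma then contradicts $\theta(\alpha_c(\kappa),\kappa)>0$.
\end{itemize}
No sprinkling, thinning, or branching structure is needed.
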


\medskip

Our study of $\alpha_c(\kappa)$ is based on the analysis of the auxiliary threshold
\begin{equation}\label{def:alpha-sharp}
\alpha_\#(\kappa) = \sup \big\{ \alpha > 0 : \E_{\alpha, \kappa} \big[ |\cC(0)| \big] < \infty \big\},
\end{equation}
which is more directly accessible through the two-point function and the symmetries of the tree, and the following sharpness result.

\begin{theorem} \label{thm:sharpness}
For all $\kappa\geq \kappa_c$, 
\[
\alpha_c(\kappa) = \alpha_\#(\kappa).
\]
\end{theorem}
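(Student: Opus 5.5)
The inequality $\alpha_\#(\kappa)\le\alpha_c(\kappa)$ is immediate: if $\theta(\alpha,\kappa)>0$ then $\E_{\alpha,\kappa}|\cC(0)|=\infty$. So the task is the reverse inequality $\alpha_c(\kappa)\le \alpha_\#(\kappa)$, i.e.\ for every $\alpha>\alpha_\#(\kappa)$ we need $\theta(\alpha,\kappa)>0$. For $\kappa=\kappa_c$ we also need $\alpha_\#(\kappa_c)=0$; this should follow from the same computation, because the expected cluster size is already infinite for every $\alpha>0$ at criticality of the loop measure.

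The plan is to exploit the tree structure and describe $\cC(0)$ through a multitype branching process. On $\mathbb{T}_d$, removing an edge $e=\{u,v\}$ separates the tree. Whether the cluster "crosses" $e$ is determined by the loops through $e$, and the loops of length at least $2$ on $\mathbb{T}_d$ are supported on finite subtrees. I would condition on the loops covering the edge from the root to a child $v$. I would then describe the exploration of $\cC(0)$ in the subtree below $v$ as follows: each vertex reached has up to $d-1$ children, and a child is reached if some open loop containing the parent also covers the edge to the child. Loops visiting both sides of an edge correlate the children, and the relevant "type" is the depth to which already-present loops reach. To make this tractable, I would instead compute the two-point function $\tau(0,x)=\P(0\leftrightarrow x)$. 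By symmetry it depends only on $|x|$. Connectivity along the geodesic $0=x_0,\dots,x_m=x$ happens iff the geodesic edges are covered by open loops forming a chain. Using the Poisson structure, the loops covering a geodesic segment $[x_i,x_j]$ (exactly the loops whose range contains it) are independent across distinct segments. This yields a renewal-type equation: $\tau(m)$ is a sum over compositions of $m$ into pieces $n_1+\dots+n_k=m$, each with weight $q(n)=\P(\text{some open loop covers a path of length }n\text{ but the chain is decomposed minimally})$. The quantity $1-(1-\tau_\kappa^{2(n+1)})^{-\alpha}$-type expressions should emerge from computing $\mu_\kappa(\text{loops whose range contains a fixed path of length }n)=-\log(1-\tau_\kappa^{2n})$ via the Green's function of the killed walk. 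Then $\E|\cC(0)|=1+\sum_m d(d-1)^{m-1}\tau(m)$ is finite iff the generating function $F(s)=\sum_n q(n)s^n$ satisfies $F(d-1)<1$, roughly giving \eqref{eq:alpha-c-kappa} as the equation $F(d-1)=1$.

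For the reverse direction I would show that when $\sum_n q(n)(d-1)^n>1$ (strictly), there is a supercritical Galton--Watson process embedded in $\cC(0)$. Take "renewal blocks": from a vertex $u$ reached via a block ending at $u$, consider descendant vertices $w$ at distance $n$ in a fixed subtree direction connected to $u$ via a block of length $n$. Different descendants are not independent because loops covering $[u,w]$ also cover initial portions of $[u,w']$. I would restrict to a thinned, independent version: for each $u$, use only loops in a subtree disjoint from previously explored ones, and choose $N$ large so that the truncated mean $\sum_{n\le N}q(n)(d-1)^n$ still exceeds $1$. Offspring of distinct explored vertices then use disjoint sets of loops (loops intersecting the explored geodesic up to $u$ only through $u$), and Harris-type FKG positivity (all events increasing in the Poisson process) handles residual dependence between siblings. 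A second-moment or FKG argument gives survival with positive probability of a process whose mean exceeds $1$. Hence $\theta(\alpha,\kappa)>0$ whenever $\alpha>\alpha_\#$, using continuity and strict monotonicity of $F$ in $\alpha$.

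The main obstacle is this independence/decoupling step: making the embedded branching process genuinely have independent offspring while keeping its mean above $1$. Correlations between siblings come from loops covering a common initial segment, and the renewal decomposition must be set up so that the "minimal chain" events for different branches depend on disjoint loop sets conditional on the past. If direct decoupling fails, the fallback is a general sharpness argument: the Duminil-Copin--Tassion-type OSSS/$\varphi_S$ method adapted to Poisson loop soups with finite-range truncation, combined with the monotone limit in the truncation.
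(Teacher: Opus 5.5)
You have the easy inequality $\alpha_\#(\kappa)\le\alpha_c(\kappa)$. The reverse inequality, which is the whole content of the theorem, is not established, and the devices you propose for the decoupling step cannot close it.

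\textbf{Thinning.} Discarding loops that touch previously explored regions, so that distinct individuals use disjoint loop sets, genuinely thins $\mathcal L_{\alpha,\kappa}$ and lowers the block probabilities. Truncating block lengths at $N$ only recovers the truncation error, not this thinning loss. For $\alpha$ just above $\alpha_\#(\kappa)$ the margin $(d-1)\lambda(\alpha,\kappa)-1$ is arbitrarily small. So you would need a decomposition with no loss at the exponential scale, e.g.\ an exact regeneration structure at cluster vertices not straddled by any open loop, together with a proof that its growth rate is still $\lambda$. Nothing of this kind is given.

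\textbf{Block weights.} Your $q(n)$ is never pinned down. The only exact composition identity available is $Q(0\leftrightarrow x_r)=\sum_{n=1}^r(q_{n-1}-q_n)\,Q(0\leftrightarrow x_{r-n})$. It is a generating-function consequence of the renewal of \emph{closed} edges, not a decomposition of $\cC(0)$ into independent blocks, so $\sum_n q(n)(d-1)^n$ is not the offspring mean of any process you have constructed.

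\textbf{FKG.} Harris--FKG does not handle the residual dependence. It gives \emph{lower} bounds on joint connection probabilities, whereas survival of a dependent branching structure needs an \emph{upper} bound on correlations between branches. Positively correlated branches (e.g.\ whole generations dying together) can die out even when the mean exceeds $1$.

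\textbf{OSSS fallback.} It yields exponential decay of $\P(0\leftrightarrow\partial B(r))$ below criticality, at some unspecified rate. On $\mathbb T_d$, with exponential volume growth, that does not give $\E_{\alpha,\kappa}[|\cC(0)|]<\infty$.

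\textbf{The case $\kappa=\kappa_c$.} There $\alpha_\#(\kappa_c)=0$ is the easy half. What must be shown is $\theta(\alpha,\kappa_c)>0$ for every $\alpha>0$, which rests on the same missing step.

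The missing ingredient is exactly such a correlation upper bound, and the paper uses it without building any branching process. It applies Paley--Zygmund to $|N_r|=|\cC(0)\cap\partial B(r)|$.
\begin{enumerate}
\item[(1)] The first moment is of order $((d-1)\lambda)^r$, by the sharp asymptotics $\P(0\leftrightarrow x_r)=\psi\lambda^r(1+o(1))$ of Theorem~\ref{T:two_point_order}.
\item[(2)] The second moment is controlled by the three-point bound $\P(u,v\in\cC(0))\le D\lambda^{\frac12(d(u,v)+d(0,u)+d(0,v))}$ of Lemma~\ref{L:3_cluster}. That bound comes from splitting at the branch point $a$ of $u$ and $v$:
\begin{enumerate}
\item[(a)] the maximal single loops from $a$ towards $0$, $u$, $v$ cost at most $C\tau_\kappa^{2(d(a,a')+d(a,u')+d(a,v'))}$ by Lemma~\ref{L:loop_3_point};
\item[(b)] the three remaining connections use loops in different components of $\mathbb T_d\setminus\{a\}$, so they are independent, each at most a constant times $\lambda$ raised to the corresponding distance;
\item[(c)] the sum over $a',u',v'$ converges because $\tau_\kappa^2<\lambda$.
\end{enumerate}
\end{enumerate}
Summing over $u,v\in\partial B(r)$ gives $\E_{\alpha,\kappa}[|N_r|^2]\le C((d-1)\lambda)^{2r}$ precisely when $(d-1)\lambda>1$, i.e.\ $\alpha>\alpha_\#(\kappa)$. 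Hence $\theta(\alpha,\kappa)>0$ with no loss at the exponential scale, and this covers $\kappa=\kappa_c$ at the same time.
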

\begin{remark}
For the loop percolation on $\Z^d$, it is known that $\kappa_c = 0$, see \cite[Theorem~1.1 and Proposition~3.4]{CS16}, and it follows from standard arguments (for percolation models with rapidly decaying correlations) that $\alpha_c(\kappa) = \alpha_\#(\kappa)$ for all $\kappa > 0$. However, the equality does not hold for $\kappa=0$ in low dimensions $d=3,4$: $\alpha_\#(0) = 0 < \alpha_c(0)$, see \cite[Theorems~1.1 and 1.4]{CS16}. It is conjectured in \cite{CS16} that in high dimensions $d\geq 5$, the equality $\alpha_\#(0) = \alpha_c(0)$ does hold. 
\end{remark}

Finally, we identify the critical behavior of the susceptibility and the percolation probability. For $\kappa>\kappa_c$, the exponents take their mean-field values: the susceptibility diverges like $(\alpha_c-\alpha)^{-1}$ and the percolation probability grows linearly in $\alpha-\alpha_c$. These are the same exponents as for the Bernoulli percolation on $\mathbb T_d$; see, for example, Chapters~9 and~10 of \cite{Gri99}. At $\kappa=\kappa_c$, where $\alpha_c=0$, the phase transition is of a higher order with the percolation probability decaying quadratically in $\alpha-\alpha_c$.

\begin{theorem} \label{thm:crit_expo}
    If $\kappa > \kappa_c$, then
    \begin{equation} \label{eq:suscep_expo}
        \E_{\alpha,\kappa} \big[ |\cC(0) | \big] \asymp \frac{1}{\alpha_c - \alpha}, \quad \text{ as } \alpha \uparrow \alpha_c,
    \end{equation}
    and
    \begin{equation}
         \theta(\alpha, \kappa) \asymp \alpha-\alpha_c, \qquad \text{as } \alpha \downarrow \alpha_c. \label{eq:beta_non-crit}
    \end{equation}
    In the case $\kappa=\kappa_c$,
    \begin{equation}
        \theta(\alpha, \kappa_c) \asymp \alpha^2, \qquad \text{as } \alpha \downarrow \alpha_c =0. \label{eq:beta-crit}
    \end{equation}
\end{theorem}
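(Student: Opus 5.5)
The plan is to reduce everything to an explicit recursive (branching-type) description of $\cC(0)$ on $\mathbb T_d$. I expect the same description to underlie the identification of $\alpha_\#(\kappa)$ with the root of \eqref{eq:alpha-c-kappa}, so I will use it freely here.

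\textbf{Setup: reducing to a branching structure.} On a tree every loop covers a finite subtree. Hence $\cC(0)$ is exactly the connected component of $0$ in the set of edges covered by at least one open loop. I will cut the tree at the root into its $d$ branches. In each branch, I condition on the covered part of the path leading into it and decompose the loops by the depth $n+1$ of the furthest edge they reach along a fixed geodesic.

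By Poisson thinning, the total $\mu_\kappa$-mass of loops through a fixed vertex that reach a fixed descendant at distance $n+1$ is $-\log(1-\tau_\kappa^{2(n+1)})$. Here $\tau_\kappa$ is the generating function of first passage to a neighbour, i.e.\ the killed Green's function ratio from Lemma~\ref{l:greens_f}. So $(1-\tau_\kappa^{2(n+1)})^{-\alpha}-1$ is the expected number of distinct such loop clusters.

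This should yield a multi-type Galton--Watson process. Its mean offspring matrix has Perron eigenvalue $\lambda(\alpha,\kappa)=(d-2)F(\alpha,\kappa)$, where $F$ is the left side of \eqref{eq:alpha-c-kappa}. By Theorem~\ref{thm:sharpness}, $\alpha_c$ is characterised by $\lambda(\alpha_c,\kappa)=1$.

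\textbf{Susceptibility for $\kappa>\kappa_c$.} From the recursion I will write
\[
\E_{\alpha,\kappa}\big[|\cC(0)|\big]=\frac{A(\alpha,\kappa)}{1-(d-2)F(\alpha,\kappa)}
\]
for $\alpha<\alpha_c$, with $A$ positive, continuous and bounded near $\alpha_c$. For $\kappa>\kappa_c$ we have $\tau_\kappa<(d-1)^{-1/2}$, so $F$ is a power series in $\alpha$ converging uniformly near $\alpha_c$. It is strictly increasing, with
\[
\partial_\alpha F(\alpha_c)=\sum_{n\ge 0}(d-1)^n\,\big(1-\tau_\kappa^{2(n+1)}\big)^{-\alpha_c}\,\big(-\log(1-\tau_\kappa^{2(n+1)})\big)\in(0,\infty).
\]
A first-order Taylor expansion then gives $1-(d-2)F(\alpha)\asymp\alpha_c-\alpha$, which proves \eqref{eq:suscep_expo}.

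\textbf{Percolation probability for $\kappa>\kappa_c$.} I will write the survival probabilities $q_i$ of the branch types as the maximal solution of the fixed-point system $1-q=G_\alpha(1-q)$, where $G_\alpha$ is the offspring generating function. I will then expand $G_\alpha$ to second order around $1$. The second moment is finite because the offspring counts have exponential tails when $\tau_\kappa^2(d-1)<1$. Projecting onto the Perron eigenvector gives
\[
(\lambda(\alpha)-1)\,\bar q\asymp \bar q^{\,2},
\]
so $\bar q\asymp\lambda(\alpha)-1\asymp\alpha-\alpha_c$. Finally, $\theta=1-\prod(1-q_i)\asymp\bar q$, which gives \eqref{eq:beta_non-crit}.

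\textbf{The case $\kappa=\kappa_c$: the main obstacle.} I expect this to be the hardest step. Here $\tau^2(d-1)=1$, so each term of $F$ is of order $\alpha(d-1)^n(d-1)^{-(n+1)}\asymp\alpha$, and $F=\infty$ for every $\alpha>0$. The offspring law is therefore heavy-tailed: the number of cluster-loops reaching depth exactly $n+1$ has mean $\asymp\alpha(d-1)^{-(n+1)}$, while $(d-1)^n$ vertices sit at that depth. Since the mean is infinite, the quadratic expansion of the previous step fails.

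I will instead bound survival directly.

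\emph{Upper bound.} The root is enclosed in an infinite cluster only if some loop through $0$ exists, which has probability $\asymp\alpha$, and, independently in law, this loop's subtree continues to an infinite cluster. I will show the continuation probability is $O(\alpha)$ uniformly. The tool is a truncated first-moment computation: the expected number of depth-$N$ vertices reached is $\asymp(C\alpha)^k$ times a polynomial factor in $N$, and I will optimise the truncation.

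\emph{Lower bound.} I will exhibit an explicit strategy of probability $\asymp\alpha^2$: one loop through $0$ of depth $n$, together with a second long loop attached to its tip. From then on, the $\sum_n\alpha\asymp\infty$ divergence means the process is supercritical at every scale, so it survives with probability bounded below.

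Turning these heuristics into matching bounds is where I expect most of the work. The plan is to use the explicit weights $(1-(d-1)^{-(n+1)})^{-\alpha}-1\sim\alpha(d-1)^{-(n+1)}$ and a renormalisation over dyadic depth scales, which would give \eqref{eq:beta-crit}.
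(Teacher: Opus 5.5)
Your susceptibility formula is right, but not for the reason you give, and both bounds on $\theta$ have genuine gaps.

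\textbf{The branching structure, and $\kappa>\kappa_c$.} The multi-type Galton--Watson process is never constructed, and it does not exist in the form you use. A loop through $v$ typically enters several child subtrees of $v$ at once. So what is passed into the subtree below $v$ is the whole trace there of the loops through ancestors of $v$: a random finite subtree, not a finite type. Hence ``the Perron eigenvalue $(d-2)F$'', the fixed-point system $1-q=G_\alpha(1-q)$ and the projection giving $(\lambda(\alpha)-1)\bar q\asymp\bar q^{\,2}$ refer to no defined object. Also, $(1-\tau_\kappa^{2(n+1)})^{-\alpha}-1=e^{\alpha m}-1$, with $m=-\log(1-\tau_\kappa^{2(n+1)})$, is the odds $\P(\text{such a loop is open})/\P(\text{none is})$, not an expected number of clusters.

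The exact structure that is available is one-dimensional: closed edges along a ray form a renewal process. That suffices for your first step. By \eqref{eq:P_Q_sum} and \eqref{eq:compare_coeff},
\[
\sum_{r\ge0}\P_{\alpha,\kappa}(0\leftrightarrow x_r)z^r=\frac{q^*+H(z)}{q^*+(1-z)H(z)}.
\]
Evaluate this at $z=d-1<1/\lambda$, where $H(d-1)=q^*F$ with your $F$ (the left side of \eqref{eq:alpha-c-kappa}). This gives exactly $\E_{\alpha,\kappa}[|\cC(0)|]=\frac{1+2F}{1-(d-2)F}$ for $\alpha<\alpha_c$. So $A=1+2F$, and your Taylor argument proves \eqref{eq:suscep_expo}, more explicitly than the paper, which uses $\psi\asymp 1$ and an expansion of $\lambda$.

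No such identity is available for $\theta$. The paper gets the lower bound in \eqref{eq:beta_non-crit} from Paley--Zygmund with the three-point bound of Lemma~\ref{L:3_cluster}. It keeps all constants uniform near $\alpha_c$ (Remark~\ref{R:expansion_C}), so that $\theta\gtrsim 1-\frac{1}{(d-1)\lambda}\asymp\alpha-\alpha_c$. The upper bound comes from Lyons' conductance criterion (Lemma~\ref{l:Lyons}). Its hypothesis is checked by opening the one-edge loop at $x\wedge y$ and using the renewal law $Q$ below it.

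\textbf{The case $\kappa=\kappa_c$, upper bound.} Here the sketch fails as stated. First moments of boundary counts cannot beat $O(\alpha)$. Single loops alone give, for every $r$ and $\alpha\le 1$,
\[
\E|N_r|\ge d(d-1)^{r-1}\big(1-(1-(d-1)^{-r})^{\alpha}\big)\ge\tfrac{d}{d-1}\alpha.
\]
The reason is that a loop through $0$ reaching $\partial B(r)$ has probability only $\asymp\alpha/r$, but meets $\asymp r$ vertices of $\partial B(r)$ on average. Nor can the continuation probability be $O(\alpha)$ uniformly over the first loop: if that loop covers $B(R)$, the continuation probability tends to $\P(\text{an infinite cluster exists})=1$ as $R\to\infty$.

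The missing idea is to truncate in loop \emph{diameter} at $L\asymp 1/\alpha$. Loops of diameter at most $L$ give a cluster dominated by a Galton--Watson process with mean offspring at most $\frac12$, so $\E|\cC_{\le L}(0)|\le 2$. An infinite cluster then forces some vertex of this finite cluster onto a loop of diameter $>L$. By Lemma~\ref{L:single_loop_to_boundary} (where $A_r\asymp r$ at $\kappa_c$), this has probability $\lesssim\alpha\cdot L^{-1}\asymp\alpha^2$.

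\textbf{The case $\kappa=\kappa_c$, lower bound.} Your two-loop event cannot work. An open loop through a given vertex reaching distance $r$ has probability $\asymp\alpha/r$, so your event has probability $\asymp\alpha^2$ only if both loops are short. But by the Mecke formula, adding loops that cover a bounded set $S$ leaves the cluster $S\cup\bigcup_{v\in S}\cC(v)$ computed in the original soup. That cluster is infinite with probability at most $|S|\,\theta(\alpha,\kappa_c)=O(|S|\alpha^2)$, which is not bounded below.

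A single loop through $0$ of diameter $\gtrsim 1/\alpha$ does have probability $\asymp\alpha^2$. But then showing survival with probability bounded below is precisely the hard step. ``Supercritical at every scale'' does not give it, since $(d-1)\lambda-1\asymp\alpha$ degenerates.

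The paper never identifies this mechanism. It runs Paley--Zygmund with $\P(0\leftrightarrow x_r)\asymp\alpha\lambda^r$ (Lemma~\ref{L:two_point_critical}) and $1-\frac{1}{(d-1)\lambda}\asymp\alpha$ (Lemma~\ref{l:lambda-kappa-critical}). The degenerating constants then cancel and leave $\theta\gtrsim\alpha^2$.
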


\smallskip

\subsection{Proof ideas}
By Theorem~\ref{thm:sharpness}, it suffices to prove Theorem~\ref{thm:alpha-c-kappa} for $\alpha_\#(\kappa)$. 

Let $(x_n)_{n \geq 0}$ be an infinite nearest neighbor simple path on $\mathbb{T}_d$ with $x_0=0$. Due to the tree structure, the analysis of $\mathbb E_{\alpha,\kappa}[|\cC(0)|]$ reduces to a one-dimensional problem on this infinite path.
We write $\{ u \leftrightarrow v\}$ for the event that vertices $u$ and $v$ are loop-connected. Then, by the symmetries of $\mathbb T_d$, 
\[
\E_{\alpha,\kappa} \big[ |\cC(0)| \big] = 
\sum\limits_{v\in\mathbb T_d} \P_{\alpha, \kappa}( 0 \leftrightarrow v) = 
1+ \sum_{n=1}^\infty d(d-1)^{n-1} \P_{\alpha, \kappa}( 0 \leftrightarrow x_n).
\]
By supermultiplicativity of the sequence $a_n=\P_{\alpha, \kappa}( 0 \leftrightarrow x_n)$ and the Fekete lemma, there exists $\lambda = \lambda(\alpha,\kappa)$ such that 
\[
\P_{\alpha,\kappa}(0 \leftrightarrow x_n) = \lambda^{n+o(n)}, \quad\text{as }n\to\infty, 
\]
which yields the following equivalent definition of $\alpha_\#(\kappa)$,
\begin{equation}\label{def:alpha-sharp-new}
\alpha_\#(\kappa) = \sup \Big\{ \alpha > 0 : \lambda(\alpha, \kappa) < \frac{1}{d-1} \Big\}.
\end{equation}
The value of $\alpha_\#(\kappa)$ is therefore characterized by the relation $\lambda\big(\alpha_\#(\kappa), \kappa\big) (d-1) = 1$.

\smallskip

In Theorem~\ref{T:two_point_order}, we prove that for $\alpha>0$ and $\kappa\geq\kappa_c$, $\lambda(\alpha,\kappa)$ is the unique solution to 
\[
(\lambda^{-1} -1) \sum_{n \geq 0} \Big( \big(1-\tau_\kappa^{2(n+1)}\big)^{-\alpha} -1 \Big) \lambda^{-n}  = 1,
\]
from which it follows that $\alpha_\#(\kappa)$ satisfies \eqref{eq:alpha-c-kappa}. 
(In fact, we show in Theorem~\ref{T:two_point_order} that $\P_{\alpha,\kappa}(0 \leftrightarrow x_n) = \psi\lambda^n(1+o(1))$, for some $\psi=\psi(\alpha,\kappa)$.)

\smallskip

To get the above formula for $\lambda(\alpha,\kappa)$, we observe that the sequence of right endpoints $x_s$ of the successive edges $(x_{s-1},x_s)$ on the ray $(x_n)_{n\geq 0}$ not traversed by any open loop is a renewal process and the event $\{0 \leftrightarrow x_n\}$ is precisely the event that the first renewal has not occurred before time $n$. The probability $q_n$ that the edge $(x_{n-1},x_n)$ is not traversed by any open loop (resp.\ that a renewal occurs at time $n$) can be explicitly computed
\[
q_n = \big(1 - \tau_\kappa^{2}\big)^\alpha \big(1 -\tau_\kappa^{2(n+1)}\big)^{- \alpha},
\]
see Lemma~\ref{l:qn}. The generating functions $Q(z) = \sum_{n\geq 0}q_n z^n$ of the renewal measure and $F(z)$ of the time of the first renewal obey the standard relation 
\[
F(z) = 1 - \frac{1}{Q(z)}, \quad |z|<1,
\]
see \eqref{eq:powerseries_FQ}. The exponential decay rate $\lambda$ of the coefficients of $F$ can then be extracted by analysing the singularities of $Q$, which is the most technical part of this paper, see Section~\ref{S:complex_ana} and particularly Lemma~\ref{L:num_zeros}.

\smallskip

\subsection{Literature review}
Poisson ensembles of Markovian loops appeared informally in the work of Symanzik \cite{Sym69} on representations of the $\phi^4$ Euclidean field. Lawler and Werner \cite{LW04} described the loop ensembles precisely, whose properties---notably, their connections with the Gaussian free field and the Schramm-Loewner Evolution---have since been extensively studied, see e.g.\ 
\cite{LT07,LJ11,LJ24,Lup16a,Lup19,SW12,Szn12,JLQ23}.

First results on the percolation of loop clusters are due to Sheffield and Werner \cite{SW12} in the setting of planar Brownian loops. Le Jan and Lemaire \cite{LJL13} considered loop percolation on general graphs and proved the existence of a non-trivial supercritical regime. 
An extensive study of the subcritical loop percolation without killing ($\kappa=0$) on $\mathbb Z^d$ in dimension $d\geq 3$ was done by Chang and Sapozhnikov \cite{CS16}, who, notably, (a) identified the critical exponents for the one-arm probability, two-point function and the tail of the cluster size in dimensions $d\geq 5$, indicating that large clusters in the subcritical regime typically contain a single big loop, and (b) obtained non-trivial bounds on the one-arm probability in dimensions $d=3,4$, indicating that the single big loop scenario fails in low dimensions. 
The precise asymptotic for the one-arm probability was recently computed by Vogel \cite{Vog26}. Using a coupling between the loop percolation and the Gaussian free field, Lupu \cite{Lup16a} proved that $\alpha_c(0)\geq \frac12$ for the loop percolation on $\mathbb Z^d$ in any dimension $d\geq 3$. 
Chang \cite{Cha17} proved an analogue of the Grimmett-Marstrand theorem (see \cite{Gri99}) for the loop percolation, which implies, in particular, local uniqueness of macroscopic clusters for all $\alpha>\alpha_c(0)$. The loop percolation in the halfplane of $\mathbb Z^2$ was studied in \cite{Lup16b}, and the vacant set of the loop percolation on $\mathbb Z^d$ in \cite{AS19}. 

While the critical regime of the loop percolation on $\mathbb Z^d$ is still poorly understood, a comprehensive understanding of the phase transition in the related loop percolation on the metric graph of $\mathbb Z^d$ has been achieved, notably, precise values of the critical intensity and critical exponents were obtained, see \cite{Lup16a,Wer21,CDL24,DPR23,DPR25,CD24,CD25} and the references therein. 
The structure of three dimensional Brownian loop clusters was recently studied in \cite{LJ26}.

\smallskip

\subsection{Outline of the paper}
In Section~\ref{sec:notation}, we introduce commonly used notation and collect some preliminary results about the intensity measure $\mu_\kappa$ and loop occupation probabilities. In Section~\ref{sec:decay_rate}, 
we prove the existence of the exponential decay rate $\lambda(\alpha,\kappa)$ of the two-point function by relating the latter to the distribution of the holding times of a certain renewal process. The key Proposition~\ref{P:xi_order} about the decay rate of the holding times is proven in Section~\ref{S:complex_ana} with tools from complex analysis. Theorem~\ref{thm:sharpness} is proven in Section~\ref{sec:sharpness-proof}, and Theorems~\ref{thm:alpha-c-kappa}, \ref{thm:regularity} and \ref{T:no_percolation} are proven in Section~\ref{sec:proofs}. Finally, in Section~\ref{S:crit_exp}, we prove Theorem~\ref{thm:crit_expo}.

\smallskip

\section{Notation and preliminary results}\label{sec:notation}

\subsection{Notation}
Let $\mathbb T_d = \big(V(\mathbb T_d), E(\mathbb T_d)\big)$ be the $d$-regular tree rooted at the origin $0$. We write $d(u,v)$ for the graph distance between vertices $u$ and $v$ in the tree. We denote by $B(u,r)$ the closed ball of radius $r$ centered at $u$ with respect to the metric $d(\cdot,\cdot)$ and by $\partial B(u,r)$ the interior boundary of $B(u,r)$, 
$\partial B(u,r) = \{ v \in V(\mathbb{T}_d) : d(u,v) = r\}$. We write $B(r)$ for $B(0,r)$ and $\partial B(r)$ for $\partial B(0,r)$. Note that $|\partial B(r)| = d(d-1)^{r-1}$. 
Let $(x_i)_{i \in \Z}$ be a bi-infinite nearest neighbor simple path on $\mathbb T_d$ with $x_0 = 0$.

\smallskip

For a loop $\ell$, we write $v\in\ell$ if $\ell$ visits the vertex $v$, $v\notin\ell$ if $\ell$ does not visit $v$, $\ell\cap A\neq \emptyset$ if $\ell$ visits a vertex in the vertex subset $A$, and $\ell\subset A$ if all the vertices visited by $\ell$ are contained in $A$. For two sets of vertices $A_1$ and $A_2$, we write $A_1\xleftrightarrow{\ell} A_2$ if the loop $\ell$ intersects both $A_1$ and $A_2$. If one of the two sets is a singleton $\{v\}$, then we omit the brackets from the notation, in particular, we write $u\xleftrightarrow{\ell} v$ if $\ell$ visits $u$ and $v$. 

\smallskip

We denote by $\mathcal L_{\alpha,\kappa}$ the Poisson point process of loops with intensity $\alpha\mu_\kappa$. Loops that are present at least once in $\mathcal{L}_{\alpha,\kappa}$ are called \emph{open}. 
We write $\{u\leftrightarrow v\}$ for the event that 
there exists a sequence of open loops $(\ell_1,\ldots,\ell_k)$ such that (a) $u\in\ell_1$, (b) $v\in\ell_k$, and (c) each pair of loops $\ell_{i-1}, \ell_i$ of the sequence shares a vertex, and we say that $u$ and $v$ are \emph{loop-connected}. If $u$ and $v$ are loop-connected by a sequence of open loops with all their vertices contained in a vertex set $A$, then we say that $u$ and $v$ are \emph{loop-connected in $A$} and denote this event by $\{u\xleftrightarrow{A} v\}$. 
We assume that $u$ is always loop-connected to itself and that $u$ is loop-connected in $A$ to itself iff $u\in A$. 
Finally, we call an edge $e$ \emph{open} if it is traversed by at least one open loop, and call it \emph{closed} otherwise. 

\smallskip

For two functions $f,g$, we write $f = O(g)$ if there exists a constant $C$ such that $f(n) \leq C g(n)$, 
and $f = o(g)$ if $f(n)/g(n) \to 0$ as $n \to \infty$. If the asymptotic notation depends on $\alpha$ and/or $\kappa$, we indicate this by a subscript, e.g.\ $o_{\alpha,\kappa}(1)$.

\subsection{Green's function}

We define the Green generating function for the simple random walk on $\mathbb{T}_d$ as
\[
G(u,v;z) = \sum_{n=0}^\infty p_n(u,v) z^n,
\]
where $p_n(u,v)$ is the probability that a simple random walk started at $u$ visits $v$ after $n$ steps, and write
\[
G^\kappa(u,v) = G\Big(u,v; \frac{1}{1+\kappa}\Big),
\]
for $\kappa>-1$. For $i,j \in \Z$, we denote $G(x_i, x_j;z)$ (resp.\ $G^\kappa(x_i,x_j)$) by $G(i,j;z)$ (resp.\ $G^\kappa(i,j)$). 
By the symmetries of $\mathbb T_d$, if $d(u,v)=k$ then $G(u,v;z) = G(0,k;z)$ and $G^\kappa(u,v) = G^\kappa(0,k)$.
Recall the definition of $\tau_\kappa$ in \eqref{def:tau}.

\begin{lemma}\label{l:greens_f}
For any vertices $u$ and $v$, the Green function $G^\kappa(u,v)$ is finite if and only if $\kappa \geq \kappa_c$. Furthermore, for $\kappa\geq \kappa_c$, 
\begin{equation} \label{eq:greens_f}
G^\kappa(u,v) = \frac{\tau_\kappa^{d(u,v)}}{1-\tau_\kappa (1+ \kappa)^{-1}}.
\end{equation}
\end{lemma}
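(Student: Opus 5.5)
We compute the generating functions $G(0,k;z)$ for $z\in(0,1/\rho]$ using first-passage decompositions on the tree. We then evaluate at $z=\frac{1}{1+\kappa}$ and read off finiteness from the radius of convergence. Here $\rho=2\sqrt{d-1}/d$; note that $\frac{1}{1+\kappa}\le 1/\rho$ is equivalent to $\kappa\ge\kappa_c$.

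First, let $F(z)=\sum_n \P_{x_1}(\text{first hit of }0\text{ at time }n)z^n$ be the first-passage generating function from a neighbour to the root. We compute it as follows.
\begin{itemize}
\item From $x_1$, the walk either steps to $0$ (probability $1/d$), or steps to one of the $d-1$ other neighbours. In the second case it must first return to $x_1$ and then hit $0$.
\item This gives the quadratic
\[
F(z)=\frac{z}{d}+\frac{(d-1)z}{d}F(z)^2 .
\]
\item Taking the root that is analytic at $0$ with $F(0)=0$,
\[
F(z)=\frac{d-\sqrt{d^2-4(d-1)z^2}}{2(d-1)z}.
\]
\item With $z=1/(1+\kappa)$ this is exactly $\tau_\kappa$ from \eqref{def:tau}.
\end{itemize}
By the tree structure, hitting a vertex at distance $k$ requires passing successively through the intermediate vertices on the geodesic. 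Hence $G(0,k;z)=F(z)^kG(0,0;z)$, which gives the factor $\tau_\kappa^{d(u,v)}$.

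Next, we compute the return generating function. Decomposing at the first step, the first-return generating function to $0$ is $U(z)=zF(z)$, since the walk steps to a neighbour and then must hit $0$. Therefore
\[
G(0,0;z)=\frac{1}{1-zF(z)},
\]
and at $z=\frac{1}{1+\kappa}$ this equals $\big(1-\tau_\kappa(1+\kappa)^{-1}\big)^{-1}$, which proves \eqref{eq:greens_f}.

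The main point of care is rigor about where these identities hold. The equations first hold as identities of power series with nonnegative coefficients, valid for small $|z|$. For real positive $z$ we then argue by monotone convergence, in three cases.
\begin{itemize}
\item The closed-form $F$ has nonnegative coefficients and radius of convergence $R=d/(2\sqrt{d-1})=1/\rho$, because its only singularities are the branch points $z=\pm d/(2\sqrt{d-1})$. Pringsheim's theorem places the singularity on the positive axis.
\item At $z=R$ the series $F(R)$ converges: the square-root singularity gives coefficients of order $n^{-3/2}R^{-n}$. Hence $F(R)=\tau_{\kappa_c}=1/\sqrt{d-1}$. Moreover $RF(R)=\frac{d}{2(d-1)}<1$ for $d\ge3$, so $G(0,0;R)<\infty$ as well. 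This handles the boundary case $\kappa=\kappa_c$.
\item For $z>R$, $F(z)=\infty$ since the coefficients of $F$ are nonnegative and its radius of convergence is $R$. Because $G(0,k;z)\ge F(z)^k\cdot 1$, and for $k=0$ one has $G(0,0;z)\ge 1+zF(z)\cdot 1$, the Green function is infinite. Equivalently, one can note that $p_{2n}(0,0)^{1/2n}\to\rho$, so $G(0,0;z)$ diverges for $z>1/\rho$.
\end{itemize}

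Combining the three cases gives finiteness if and only if $\kappa\ge\kappa_c$, together with the explicit formula \eqref{eq:greens_f}.
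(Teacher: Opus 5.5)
Your proof is correct, but it takes a different route from the paper.

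\textbf{The paper's route.} The paper takes the finiteness dichotomy from the known spectral radius $\rho(\mathbb T_d)=2\sqrt{d-1}/d$, citing Woess. It then writes the one-step recursion $G(0,r;z)=z\big(\frac{d-1}{d}G(0,r+1;z)+\frac1d G(0,r-1;z)\big)$ together with $G(0,0;z)=1+zG(0,1;z)$. It checks that \eqref{eq:greens_f} solves this system for $\kappa>\kappa_c$, and passes to $\kappa=\kappa_c$ by monotone convergence and continuity of $\tau_\kappa$.

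\textbf{Your route.} You use the first-passage function $F$. Its quadratic equation yields, in one computation, $\tau_\kappa$, the factorization $G(0,k;z)=F(z)^kG(0,0;z)$, and $G(0,0;z)=(1-zF(z))^{-1}$. You also read off the radius $1/\rho$ from the branch points of $F$, so the argument is self-contained and needs no external spectral-radius input.

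\textbf{What your route buys.} It settles a point the paper's verification leaves implicit. The recursion plus the condition at $r=0$ determines $G(0,\cdot\,;z)$ only up to a one-parameter family $a\tau_\kappa^r+b\big((d-1)\tau_\kappa\big)^{-r}$. For $\kappa\in(\kappa_c,0)$ both characteristic roots lie in $(0,1)$, so decay alone does not force $b=0$. Your factorization $G(0,k;z)=F(z)^kG(0,0;z)$ selects the right solution automatically.

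\textbf{What it costs.} You must identify the formal series with the closed-form root, which works because the quadratic has a unique power-series solution with $F(0)=0$. You must also treat $z=1/\rho$ separately. There your check $RF(R)=\frac{d}{2(d-1)}<1$ is exactly what is needed.

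\textbf{A simplification.} At $z=R$ you do not need coefficient asymptotics. If you used them, you would also have to account for the second dominant singularity at $-R$, since $F$ is odd. Monotone convergence gives $\sum_n f_nR^n=\lim_{z\uparrow R}F(z)=1/\sqrt{d-1}$ directly, which is how the paper handles $\kappa=\kappa_c$.
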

\begin{proof}
The radius of convergence of the Green generating function $G(u,v;z)$ is the inverse of the spectral radius of the random walk on $\mathbb T_d$, 
\[
\rho(\mathbb T_d) = \lim_{n\to \infty} p_{2n}(0,0)^{\frac{1}{2n}} = 2\sqrt{d-1}/d,
\]
see e.g.\ \cite{Woe00}.
By \eqref{def:kappa-c}, $\rho(\mathbb T_d) = 1+\kappa_c$; thus, $G^\kappa(u,v)$ is finite if $\kappa > \kappa_c$ and infinite if $\kappa<\kappa_c$. 

It suffices to prove \eqref{eq:greens_f} for $u=0$ and $v=x_r$ with $r\geq 0$. By the symmetries of $\mathbb T_d$, for $|z|<1+\kappa_c$ and $r\geq 1$,  
\begin{align*}
G(0,r;z) &= \sum_{n\geq 1} \sum_v p_{1}(0,v) p_{n-1}(v,x_r) z^n \\
&= \frac{d-1}{d} \sum_{n\geq 1} p_{n-1}(0,x_{r+1}) z^n + \frac{1}{d} \sum_{n \geq 1} p_{n-1}(0,x_{r-1}) z^n \\
&=z \Big( \frac{d-1}{d} G(0,r+1;z) + \frac{1}{d} G(0,r-1;z) \Big),
\end{align*}
and for $r=0$, $G(0,0;z) = 1 + zG (0,1;z)$.

If $\kappa>\kappa_c$, the recursive equation for $z = 1/(1+\kappa)$ is precisely solved by \eqref{eq:greens_f}. If $\kappa=\kappa_c$, \eqref{eq:greens_f} follows by the monotone convergence theorem and the continuity of $\tau_\kappa$ in $\kappa$. 
 \end{proof}

\subsection{Loop percolation tools}

The following formula is a key to explicit computations of loop occupation probabilities, see \cite[Lemma~2.5]{CS16} or \cite[Proposition 18]{LJ11}. 
\begin{lemma}\label{L:loop_det}
Let $\kappa\geq \kappa_c$. For a finite subset of vertices $F$,
\begin{equation}\label{eq:loop_det}
    \mu_\kappa(\ell : \ell \cap F \neq \emptyset)
    =
    \log \det\big( G^\kappa|_{F} \big),
\end{equation}
where $G^\kappa$ is the Green function viewed as a matrix $(G^\kappa(x,y))_{x,y}$, and $G^\kappa|_{F}$ is its submatrix indexed by $F \times F$.
In particular, for $n$ distinct vertices $v_1, \ldots, v_n$,
\begin{equation}\label{eq:loop_det_2}
    \mu_\kappa\big( \ell : v_i \in \ell \text{ for } i \in \{1,\ldots,n\} \big)
    =
    \sum_{\substack{A \subset \{v_1,\ldots,v_n\} \\ A \neq \emptyset}}
    (-1)^{\# A + 1}
    \log \det\big( G^\kappa|_{A} \big).
\end{equation}
\end{lemma}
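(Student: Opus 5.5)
The statement is the standard determinantal formula for the loop measure (Le Jan), so I will reduce it to that identity and use inclusion–exclusion for the second part.

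\textbf{Step 1: the complement.} Write $P$ for the transition matrix of simple random walk and set $z=\frac{1}{1+\kappa}$. The measure $\mu_\kappa$ is the loop measure of the walk $zP$, which is killed with probability $1-z$ at each step.

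For $\kappa>\kappa_c$, the total mass of loops meeting $F$ is finite. To see this, split the loops into those that stay inside $F$ and those that do not.
\begin{itemize}
\item Loops inside $F$ have total mass $-\log\det(I-zP|_F)$.
\item Loops that leave $F$ are handled by the standard decomposition $\dot\mu = \sum_n \frac1n z^n\,\mathrm{tr}(P^n)$ restricted to loops hitting $F$.
\end{itemize}
Together these give the classical identity
\[
\mu_\kappa(\ell:\ell\cap F\neq\emptyset)=\sum_{n\ge1}\frac{1}{n}\mathrm{tr}\big((zP)^n\big)\Big|_{\text{hitting }F}=\log\det(G^\kappa|_F).
\]

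\textbf{Step 2: proving the identity.} I would prove it by induction on $|F|$ using the Schur complement. Adding a vertex $x$ to $F$ adds the mass of loops that hit $x$ but avoid $F$. This mass equals
\[
\log G_{V\setminus F}(x,x),
\]
where $G_{V\setminus F}$ is the Green function of the walk killed on $F$. The reason is that a based loop rooted at its first visit to $x$ decomposes into excursions from $x$. The factor $\frac1n$ together with cyclic rerooting gives $\sum_k \frac1k\, r^k=-\log(1-r)$, where $r$ is the return probability, and $G_{V\setminus F}(x,x)=(1-r)^{-1}$.

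By the Schur complement formula,
\[
\det G|_{F\cup\{x\}}=\det G|_F\cdot\big(G(x,x)-G(x,F)G|_F^{-1}G(F,x)\big),
\]
and the bracket equals $G_{V\setminus F}(x,x)$ by the strong Markov property at the hitting time of $F$. This completes the induction for $\kappa>\kappa_c$.

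\textbf{Step 3: the boundary case.} For $\kappa=\kappa_c$, I would pass to the limit $\kappa\downarrow\kappa_c$. The left-hand side converges by monotone convergence. The right-hand side converges by the continuity of $\tau_\kappa$ and the explicit formula of Lemma~\ref{l:greens_f}; the matrix entries stay finite, and positive definiteness persists in the limit.

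\textbf{Step 4: the second formula.} Apply inclusion–exclusion to the events $E_i=\{v_i\in\ell\}$:
\[
\mu(\cap E_i)=\sum_{\emptyset\ne A}(-1)^{|A|+1}\mu\big(\cup_{v\in A}E_v\big).
\]
Each union equals $\mu(\ell\cap A\neq\emptyset)=\log\det G^\kappa|_A$ by Step 1. This is legitimate because all the masses involved are finite.

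\textbf{Main obstacle.} The delicate point is the excursion/rerooting computation in Step 2. It must correctly account for the $\frac1n$ weight and for loops with nontrivial period. I would handle it by working with based loops rooted at visits to $x$ and using $\frac{1}{n}\cdot\#\{\text{visits to }x\}$ counting, which yields $\sum_k r^k/k$.
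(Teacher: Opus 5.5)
Your proof is correct. It takes a different route from the one behind the paper's statement: the paper gives no argument and only cites Le Jan and Chang--Sapozhnikov. The standard argument there is global. On a finite state space, the total mass of nontrivial loops is $-\log\det(I-zP)$, so the loops meeting $F$ have mass $\log\det G-\log\det G_{V\setminus F}$. This equals $\log\det G|_F$ by Jacobi's complementary-minor identity. On an infinite graph one then passes through finite exhaustions, or works with the trace of the walk on $F$, whose Green function is $G|_F$.

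You instead peel off one vertex at a time. The excursion decomposition with the $1/k$ rerooting gives $\mu_\kappa(\ell: x\in\ell,\ \ell\cap F=\emptyset)=\log G_{V\setminus F}(x,x)$. The first-entrance decomposition of $G(x,\cdot)|_F$ through the hitting distribution of $F$ then identifies the Schur complement with $G_{V\setminus F}(x,x)$. So you are really proving the classical product formula $\det G|_F=\prod_j G_{V\setminus\{x_1,\dots,x_{j-1}\}}(x_j,x_j)$. This buys you three things:
\begin{itemize}
\item everything happens directly on the infinite tree, with no finite-volume approximation;
\item you only need $G^\kappa(x,x)<\infty$;
\item finiteness of the masses and $\det G|_F=e^{\mu_\kappa(\ell\cap F\neq\emptyset)}>0$ come out as by-products of the induction. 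The latter is exactly what you need to form the Schur complement at the next step, so say so explicitly.
\end{itemize}

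Two small corrections. First, Step 1 is not an argument. On the infinite tree, $\sum_n\frac1n z^n\mathrm{tr}(P^n)$ diverges, and ``restricted to loops hitting $F$'' just restates the claim. Nothing depends on it, though, since Step 2 proves finiteness and the identity at once.

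Second, Step 3 is unnecessary. By Lemma~\ref{l:greens_f}, $G^{\kappa_c}(x,x)=\frac{2(d-1)}{d-2}<\infty$, so Step 2 runs verbatim at $\kappa=\kappa_c$. If you keep the limit, ``positive definiteness persists'' is the wrong justification, because limits of positive definite matrices are only semidefinite. Use instead that $\det G^\kappa|_F=e^{\mu_\kappa(\ell\cap F\neq\emptyset)}\geq 1$ for $\kappa>\kappa_c$, so the limiting determinant is at least $1$ and the logarithm passes to the limit.
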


In what follows, we collect several applications of Lemma~\ref{L:loop_det}.

\begin{lemma} \label{L:single_loop}
Let $\alpha>0$ and $\kappa\geq\kappa_c$. For all distinct vertices $u$ and $v$,  
\begin{equation}\label{eq:single_loop_mu_kappa}
 \mu_\kappa(\ell\,:\, u \xleftrightarrow{\ell} v) = - \log( 1 - \tau_\kappa^{2d(u,v)}).
\end{equation}
In particular, 
\begin{equation}\label{eq:single_loop_LS}
        \P\big(\exists \ell\in\mathcal L_{\alpha,\kappa}\,:\,u \xleftrightarrow{\ell} v\big) = 1 - (1 - \tau_\kappa^{2d(u,v)})^\alpha \leq 2\alpha\, \tau_\kappa^{2d(u,v)}. 
\end{equation}
\end{lemma}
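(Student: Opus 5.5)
We apply \eqref{eq:loop_det_2} from Lemma~\ref{L:loop_det} with $n=2$ and the two vertices $u,v$. Then
\[
\mu_\kappa(\ell : u\xleftrightarrow{\ell} v) = \log G^\kappa(u,u) + \log G^\kappa(v,v) - \log\det\big(G^\kappa|_{\{u,v\}}\big).
\]
Write $k=d(u,v)\geq 1$, $\tau=\tau_\kappa$ and $c = (1-\tau(1+\kappa)^{-1})^{-1}$. By Lemma~\ref{l:greens_f}, the $2\times 2$ matrix $G^\kappa|_{\{u,v\}}$ has diagonal entries $c$ and off-diagonal entries $c\tau^{k}$. Its determinant is therefore $c^2(1-\tau^{2k})$, and the expression above becomes
\[
2\log c - 2\log c - \log(1-\tau^{2k}) = -\log(1-\tau^{2k}),
\]
which is \eqref{eq:single_loop_mu_kappa}. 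Since $\tau\in(0,1)$ when $\kappa\geq\kappa_c$ and $k\ge1$, the quantity $1-\tau^{2k}$ is positive and all logarithms are finite. The only point needing a remark is that Lemma~\ref{L:loop_det} is applied at $\kappa=\kappa_c$. This is allowed because that lemma is stated for $\kappa\geq\kappa_c$ and $G^{\kappa_c}$ is finite by Lemma~\ref{l:greens_f}.

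For \eqref{eq:single_loop_LS}, note that the number of loops in $\mathcal L_{\alpha,\kappa}$ visiting both $u$ and $v$ is Poisson with mean $\alpha\mu_\kappa(\ell: u\xleftrightarrow{\ell}v)$. Hence the probability that there is at least one such loop is
\[
1-\exp\big(\alpha\log(1-\tau^{2k})\big) = 1-(1-\tau^{2k})^\alpha.
\]

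It remains to prove the upper bound. Set $x=\tau^{2k}$; then $x\le \tau^2\le 1/(d-1)\le 1/2$ because $d\geq 3$. Using $1-e^{-y}\le y$ with $y=-\alpha\log(1-x)$, we get
\[
1-(1-x)^\alpha\le -\alpha\log(1-x).
\]
For $x\in[0,1/2]$ the inequality $-\log(1-x)\le x/(1-x)\le 2x$ holds. Combining the two gives $1-(1-\tau^{2k})^\alpha\le 2\alpha\tau^{2k}$, which completes the proof of \eqref{eq:single_loop_LS}.
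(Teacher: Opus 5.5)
Your proof is correct and follows the paper's argument: the $2\times 2$ determinant computation via \eqref{eq:loop_det_2} and Lemma~\ref{l:greens_f}, followed by the Poisson void probability. The only difference is in the final bound. Your chain $1-(1-x)^\alpha\le -\alpha\log(1-x)\le 2\alpha x$ for $x\le 1/2$ covers all $\alpha>0$ in one step, whereas the paper treats $\alpha\ge 1$ and $\alpha\in[0,1]$ separately.
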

\begin{proof}
The equality in \eqref{eq:single_loop_LS} follows from \eqref{eq:single_loop_mu_kappa} and the definition of the Poisson process of loops, 
\[
\P\big(\exists \ell\in\mathcal L_{\alpha,\kappa}\,:\,u \xleftrightarrow{\ell} v\big) = 1 - \exp\big(-\alpha\mu_\kappa(\ell\,:\, u \xleftrightarrow{\ell} v)\big),
\]
and the upper bound follows from the inequalities (a) $1-(1-x)^\beta \leq \beta x$ for $x \in [0,1]$ and $\beta \geq 1$ and (b) $1-(1-x)^\beta \leq 2(1-\frac{1}{2^\beta})x \leq 2\beta x$ for $x \in [0,\frac12]$ and $\beta \in[0,1]$, and the fact that $\tau_\kappa^2\leq \frac{1}{d-1}\leq \frac12$. 

The equality \eqref{eq:single_loop_mu_kappa} is immediate from \eqref{eq:loop_det_2} and Lemma~\ref{l:greens_f}. Indeed, 
\begin{align*}
\mu_\kappa(\ell\,:\, u \xleftrightarrow{\ell} v) &= \mu_\kappa( \ell : u,v \in \ell)\\
&= \log G^\kappa(u,u) + \log G^\kappa(v,v) - \log\big(G^\kappa(u,u)G^\kappa(v,v) - G^\kappa(u,v)^2 \big) \\
&= - \log \Big( 1 - \frac{G^\kappa(u,v)^2}{G^\kappa(0,0)^2} \Big) 
\stackrel{\eqref{eq:greens_f}}= - \log( 1 - \tau_\kappa^{2d(u,v)}).
\qedhere
\end{align*}
\end{proof}
\begin{remark}\label{rem:loops-cover-T}
If $\kappa < \kappa_c$, one can prove in a similar way to  \cite[Proposition~3.4]{CS16} that 
$\mu_\kappa(\ell : u\xleftrightarrow{\ell}v) = \infty$ for all $u$ and $v$. In particular, for any $\alpha>0$, the set of loops from $\mathcal L_{\alpha,\kappa}$ that visit $0$ covers the tree $\mathbb T_d$ almost surely. 
 \end{remark}

\begin{lemma}\label{l:single_loop_2-1}
Let $\alpha>0$ and $\kappa\geq\kappa_c$. Let $u,v,w$ be vertices such that $v$ lies on the geodesic between $u$ and $w$. Then 
\begin{equation}\label{eq:single_loop_2-1}
\P(\exists \ell\in\mathcal L_{\alpha,\kappa}\,:\, u,v \in \ell\text{ and }w\notin\ell) = 1 - \big(1 - \tau_\kappa^{2d(u,v)}\big)^\alpha\big(1 - \tau_\kappa^{2d(u,w)}\big)^{-\alpha}.
\end{equation}
\end{lemma}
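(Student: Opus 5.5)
We compute the Poisson probability through the $\mu_\kappa$-mass of the relevant set of loops. Since $\mathcal L_{\alpha,\kappa}$ is a Poisson process with intensity $\alpha\mu_\kappa$,
\[
\P(\exists \ell\in\mathcal L_{\alpha,\kappa}\,:\, u,v \in \ell,\ w\notin\ell) = 1-\exp\big(-\alpha\,\mu_\kappa(\ell : u,v\in\ell,\ w\notin\ell)\big).
\]
It therefore suffices to show
\[
\mu_\kappa(\ell : u,v\in\ell,\ w\notin\ell) = -\log\big(1-\tau_\kappa^{2d(u,v)}\big) + \log\big(1-\tau_\kappa^{2d(u,w)}\big).
\]

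The key observation is geometric. Since $v$ lies on the geodesic between $u$ and $w$, and $\mathbb T_d$ is a tree, every nearest neighbor path from $u$ to $w$ passes through $v$. Hence every loop visiting both $u$ and $w$ also visits $v$, so
\[
\{\ell : u,w\in\ell\}\subset\{\ell : u,v\in\ell\}.
\]
Consequently
\[
\mu_\kappa(\ell : u,v\in\ell,\ w\notin\ell) = \mu_\kappa(\ell: u,v\in\ell) - \mu_\kappa(\ell : u,w\in\ell).
\]
Both terms are finite for $\kappa\ge\kappa_c$ and $u\ne v$, by \eqref{eq:single_loop_mu_kappa} of Lemma~\ref{L:single_loop}.

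We then apply \eqref{eq:single_loop_mu_kappa} to the pairs $(u,v)$ and $(u,w)$. This gives exactly the displayed identity above, and exponentiating with the factor $-\alpha$ yields \eqref{eq:single_loop_2-1}.

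Two degenerate cases need a separate check.
\begin{itemize}
\item If $v=u$, the event is simply that some loop visits $u$ and avoids $w$; the same subtraction works, provided one interprets $\tau_\kappa^{0}=1$. In that case the left factor is $0$, so the formula should be understood for $u\neq v$, which is presumably intended.
\item If $v=w$, both sides vanish.
\end{itemize}

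There is no real obstacle. The only point requiring care is the inclusion of loop sets, which relies on the tree structure: a loop is a closed path, so any loop containing $u$ and $w$ traverses the unique geodesic between them, and in particular visits $v$.
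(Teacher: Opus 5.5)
Your proof is correct and follows essentially the same route as the paper's. You reduce to the $\mu_\kappa$-mass and use the tree structure to get $\{\ell: u,w\in\ell\}=\{\ell: u,v,w\in\ell\}$, so the mass equals $\mu_\kappa(\ell:u,v\in\ell)-\mu_\kappa(\ell:u,w\in\ell)$; you then apply \eqref{eq:single_loop_mu_kappa}.

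Your remark on the degenerate case $v=u$ is slightly muddled, though you still reach the right conclusion. There \eqref{eq:single_loop_mu_kappa} does not apply. In fact $\mu_\kappa(\ell:u\in\ell)=\log G^\kappa(u,u)<\infty$, so the formula genuinely fails rather than ``working'' with $\tau_\kappa^0=1$. You are nonetheless right that $u\neq v$ is the intended setting.
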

\begin{proof}
It suffices to show that 
\[
\mu_\kappa (\ell\,:\, u,v \in \ell\text{ and }w\notin\ell) = 
\log\Big(\big(1 - \tau_\kappa^{2d(u,v)}\big)^{-1}\big(1 - \tau_\kappa^{2d(u,w)}\big)\Big).
\]
By Lemma~\ref{L:single_loop},
\begin{align*}
    \mu_\kappa(\ell : u,v \in \ell, w \not\in \ell) 
    &=  \mu_\kappa(\ell : u,v \in \ell) - \mu_\kappa(\ell : u,v,w \in \ell) \\
    &=  \mu_\kappa(\ell : u,v \in \ell) -  \mu_\kappa(\ell : u,w \in \ell) \\
    &= \log(1 - \tau_\kappa^{2d(u,w)}) - \log(1 - \tau_\kappa^{2d(u,v)}),
\end{align*}
and the result follows. 
\end{proof}

\begin{lemma} \label{L:3_single_loop}
Let $\alpha>0$ and $\kappa\geq\kappa_c$. For any distinct vertices $u,v,w$, 
\[
\P(\exists \ell\in\mathcal L_{\alpha,\kappa}\,:\, u,v,w \in \ell) \leq 16\alpha\, \tau_\kappa^{d(u,v) + d(u,w) + d(v,w)}.
\]
\end{lemma}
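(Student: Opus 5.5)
Since $1-e^{-\alpha x}\leq \alpha x$, the first step is the reduction
\[
\P(\exists \ell\in\mathcal L_{\alpha,\kappa}: u,v,w\in\ell)=1-\exp\big(-\alpha\,\mu_\kappa(\ell: u,v,w\in\ell)\big)\leq \alpha\,\mu_\kappa(\ell:u,v,w\in\ell).
\]
It then suffices to show $\mu_\kappa(\ell:u,v,w\in\ell)\leq 16\,\tau_\kappa^{d(u,v)+d(u,w)+d(v,w)}$. On the tree, the three geodesics between $u,v,w$ meet at a unique median vertex $m$, possibly equal to one of them. Write $a=d(u,m)$, $b=d(v,m)$, $c=d(w,m)$, so that $d(u,v)+d(u,w)+d(v,w)=2(a+b+c)$.

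Two cases arise. If $m$ is one of the three vertices, say $m=v$ (so $v$ lies on the geodesic from $u$ to $w$ and $b=0$), then every loop visiting $u$ and $w$ must pass through $v$, since a loop is a closed path on a tree. Hence $\mu_\kappa(u,v,w\in\ell)=\mu_\kappa(u,w\in\ell)=-\log(1-\tau_\kappa^{2(a+c)})$ by Lemma~\ref{L:single_loop}. Using $\tau_\kappa^2\leq\frac12$ and $-\log(1-x)\leq 2x$ on $[0,\frac12]$, this is at most $2\tau_\kappa^{2(a+c)}$, which is the claim with exponent $2(a+c)$.

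In the genuine tripod case $m\notin\{u,v,w\}$, I will apply the inclusion–exclusion formula \eqref{eq:loop_det_2} with $n=3$, together with the determinant formula and Lemma~\ref{l:greens_f}. Normalizing by $G^\kappa(0,0)$, write $t=\tau_\kappa$ and $g_{xy}=t^{d(x,y)}$. Then
\[
\mu_\kappa(u,v,w\in\ell)=-\log(1-t^{2(a+b)})-\log(1-t^{2(a+c)})-\log(1-t^{2(b+c)})+\log D,
\]
where $D=\det\begin{pmatrix}1&t^{a+b}&t^{a+c}\\ t^{a+b}&1&t^{b+c}\\ t^{a+c}&t^{b+c}&1\end{pmatrix}=1-x-y-z+2t^{2(a+b+c)}$, with $x=t^{2(a+b)}$, $y=t^{2(a+c)}$, $z=t^{2(b+c)}$. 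Note that $xyz=t^{4(a+b+c)}$, so $2t^{2(a+b+c)}=2\sqrt{xyz}$.

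The expression then becomes $\log\frac{1-x-y-z+2\sqrt{xyz}}{(1-x)(1-y)(1-z)}$. Expanding the denominator gives $(1-x)(1-y)(1-z)=1-x-y-z+xy+xz+yz-xyz$, so the ratio equals $1+\frac{2\sqrt{xyz}-xy-xz-yz+xyz}{(1-x)(1-y)(1-z)}$. Using $\log(1+s)\leq s$ and $x,y,z\leq\frac12$ (so the denominator is at least $\frac18$), the measure is at most $8\,(2\sqrt{xyz}+xyz)\leq 16\cdot\frac{3}{2}\sqrt{xyz}$ after dropping the negative terms. This is slightly too big, so the bookkeeping has to be sharpened. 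One refinement is to bound $xyz\leq\frac18\sqrt{xyz}$, using $\sqrt{xyz}\leq 2^{-3/2}$. Another is to keep the negative terms, since $xy\geq$ a power of $t$ comparable to $\sqrt{xyz}$ times something. If needed, I would instead use the sharper lower bound $(1-x)(1-y)(1-z)\geq 1-x-y-z\geq\frac14$, valid because $x,y,z\leq t^4\leq\frac14$ when $a+b\geq2$ etc. Since $\sqrt{xyz}=t^{2(a+b+c)}$, this gives exactly the required $16\,\tau_\kappa^{d(u,v)+d(u,w)+d(v,w)}$.

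The main obstacle is only this constant chasing, in particular making sure the determinant is positive and the denominators stay bounded below uniformly in $\kappa\geq\kappa_c$. I would handle it using $t^2\leq\frac1{d-1}\leq\frac12$ and the fact that in the tripod case each pairwise distance is at least $2$. Then $x,y,z\leq\frac14$, the denominator is at least $\frac{27}{64}$, and $8/\tfrac{27}{64}\cdot$ constants stay below $16$.
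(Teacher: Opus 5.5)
Your proof is correct and is essentially the paper's argument. You use the same split into the case where one vertex lies on the geodesic between the other two (reducing to Lemma~\ref{L:single_loop}) and the tripod case, and in the latter you apply \eqref{eq:loop_det_2} with the explicit Green function to obtain $\mu_\kappa(\ell: u,v,w\in\ell)=\log\frac{1-x-y-z+2\sqrt{xyz}}{(1-x)(1-y)(1-z)}$.

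Your constant-chasing is more roundabout than it needs to be. The bound $xyz\le\frac18\sqrt{xyz}$ needs $x,y,z\le\frac14$, not $\frac12$, and in any case $xyz\le xy$ already absorbs that term. The paper simply uses $(1-x)(1-y)(1-z)\ge 1-x-y-z\ge\frac14$ to get $\mu_\kappa\le 8\sqrt{xyz}$. From there, your direct step $1-e^{-\alpha\mu}\le\alpha\mu$ finishes a bit more cleanly than the paper's detour through $-8\log\bigl(1-\tau_\kappa^{2(a+b+c)}\bigr)$.
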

\begin{proof}
If one of the vertices, say $v$, lies on the geodesic between $u$ and $w$, then by  \eqref{eq:single_loop_LS},
\begin{align*}
\P(\exists \ell\in\mathcal L_{\alpha,\kappa}\,:\, u,v,w \in \ell) 
&= \P(\exists \ell\in\mathcal L_{\alpha,\kappa}\,:\, u,w \in \ell)
\leq 2\alpha\, \tau_\kappa^{2 d(u,w)}\\
&=  2\alpha\, \tau_\kappa^{d(u,w) + d(u,v) + d(v,w)}.
\end{align*}
We may therefore assume that $u,v,w$ are distinct, $w=0$,
$d(0,u) = i \geq j = d(0,v)$, and the common ancestor $a$ of $u$ and $v$ satisfies $d(0,a)=k$ with $1\le k<j$.
It suffices to show that 
\[
\mu_\kappa( \ell : 0,u,v \in \ell) \leq - 8 \log(1 - \tau_\kappa^{2(i+j-k)}).
\]
Indeed, if such a bound holds, in the same way as \eqref{eq:single_loop_LS} follows from \eqref{eq:single_loop_mu_kappa}, we get
    \begin{align*}
    \P(\exists \ell\in\mathcal L_{\alpha,\kappa}\,:\, 0,u,v \in \ell) 
        &= 1 - \exp\big(-\alpha \mu_\kappa( \ell : 0,u,v \in \ell)\big)
        \leq 1 - (1 - \tau_\kappa^{2(i+j-k)})^{8 \alpha } \\
        &\leq  16\alpha\, \tau_\kappa^{2(i+j-k)} =16\alpha\, \tau_\kappa^{d(u,v) + d(u,w) + d(v,w)},
    \end{align*}
    as required.
    Using that $d(u,v) = i+j-2k$, we obtain by \eqref{eq:loop_det_2} that
    \begin{align}
        \mu_\kappa( \ell : 0,u,v \in \ell) &= 3 \log G^\kappa(0,0) \label{eq:sum_dets} \\
        &\quad - \log \det \begin{pmatrix} G^\kappa(0,0) & G^\kappa(0,i) \\ G^\kappa(0,i) & G^\kappa(0,0)  \end{pmatrix} \nonumber \\
        &\quad - \log \det \begin{pmatrix} G^\kappa(0,0) & G^\kappa(0,j) \\ G^\kappa(0,j) & G^\kappa(0,0)  \end{pmatrix} \nonumber \\
        &\quad - \log \det \begin{pmatrix} G^\kappa(0,0) & G^\kappa(0,i+j-2k) \\ G^\kappa(0,i+j-2k) & G^\kappa(0,0)  \end{pmatrix} \nonumber \\
        &\quad + \log \det \begin{pmatrix} G^\kappa(0,0) & G^\kappa(0,i) & G^\kappa(0,j) \\ G^\kappa(0,i) & G^\kappa(0,0) & G^\kappa(0,i+j-2k) \\ G^\kappa(0,j) & G^\kappa(0,i+j-2k) & G^\kappa(0,0) \end{pmatrix}. \nonumber
    \end{align}
   By Lemma~\ref{l:greens_f}, 
    \begin{align*}
        \mu_\kappa( \ell : 0,u,v \in \ell) &= -\log( 1 - \tau_\kappa^{2i})  - \log( 1 - \tau_\kappa^{2j}) -  \log( 1 - \tau_\kappa^{2(i+j-2k)}) \\
        &\qquad + \log \Big( 1 + 2 \tau_\kappa^{2(i+j-k)} - \tau_\kappa^{2i} - \tau_\kappa^{2j} - \tau_\kappa^{2(i+j-2k)}  \Big).
    \end{align*}
    Let $A = 1 - \tau_\kappa^{2i} - \tau_\kappa^{2j} - \tau_\kappa^{2(i+j-2k)}$. Since $d \geq 3$ and $i \geq j > k \geq 1$, we have 
\[
(1-\tau_\kappa^{2i})(1-\tau_\kappa^{2j})(1-\tau_\kappa^{2(i+j-2k)}) \geq A \geq \frac{1}{4}.
\]
Therefore, using that $\log(1+x) \leq x$ gives
\begin{align*}
        \mu_\kappa( \ell : 0,u,v \in \ell) &= \log\Big( \frac{A + 2 \tau_\kappa^{2(i+j-k)}}{(1-\tau_\kappa^{2i})(1-\tau_\kappa^{2j})(1-\tau_\kappa^{2(i+j-2k)})} \Big) \\
        &\leq \log \Big(1 + \frac{2 \tau_\kappa^{2(i+j-k)}}{A}\Big) \leq \frac{2 \tau_\kappa^{2(i+j-k)}}{A}.
\end{align*}
Finally, the inequality $x \leq - \log(1-x)$ yields
\[
\mu_\kappa( \ell : 0,u,v \in \ell) \leq -8\log(1 - \tau_\kappa^{2(i+j-k)}). \qedhere
\]
\end{proof}

\begin{lemma} \label{L:loop_3_point}
Let $\alpha>0$ and $\kappa\geq\kappa_c$. 
Let $u,v,w$ be three distinct vertices whose common ancestor is $0$. 
Then 
\begin{equation}\label{eq:loop_3_point}
\P\big(\exists\, \ell_1,\ell_2,\ell_3\in \mathcal L_{\alpha,\kappa}\,:\,0 \xleftrightarrow{\ell_1} u, 0 \xleftrightarrow{\ell_2} v, 0 \xleftrightarrow{\ell_3} w\big) \leq \big(16\alpha + 12\alpha^2 + 8\alpha^3\big) \tau_\kappa^{d(u,v) + d(u,w) + d(v,w)}.
\end{equation}
\end{lemma}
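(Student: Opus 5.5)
The plan is a union bound over how the three connecting loops coincide. Each resulting term is then controlled either by Lemma~\ref{L:3_single_loop} or by the Poisson structure together with \eqref{eq:single_loop_mu_kappa}. The first step is to record the geometry. Since $0$ is the common ancestor of $u,v,w$, the three geodesics from $0$ to $u$, $v$, $w$ pairwise meet only at $0$. Hence $d(u,v)=|u|+|v|$, and similarly for the other pairs, where $|x|=d(0,x)$. Consequently
\[
d(u,v)+d(u,w)+d(v,w) = 2(|u|+|v|+|w|).
\]

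Next, I split the event according to which of $\ell_1,\ell_2,\ell_3$ are equal, giving three cases.

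\emph{All three loops coincide.} Then a single loop of $\mathcal L_{\alpha,\kappa}$ visits $u,v,w$. Lemma~\ref{L:3_single_loop} bounds this probability by $16\alpha\,\tau_\kappa^{d(u,v)+d(u,w)+d(v,w)}$.

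\emph{Exactly two distinct loops.} By symmetry there are three sub-cases; take the one where a loop $\ell$ visits $0,u,v$ and a distinct loop $\ell'$ visits $0,w$. Let $N_A$ and $N_B$ be the numbers of loops of $\mathcal L_{\alpha,\kappa}$ in the two classes, counted with multiplicity. The probability that there exist two distinct points of the Poisson process, one in each class, is at most the expected number of ordered pairs of distinct points. By the multivariate Mecke formula (the factorial moment formula for Poisson processes), this expectation equals $\alpha^2\mu_\kappa(A)\mu_\kappa(B)$. I then bound the two masses as follows:
\begin{itemize}
\item $\mu_\kappa(\ell: 0,u,v\in\ell)\le \mu_\kappa(\ell:u,v\in\ell) = -\log(1-\tau_\kappa^{2d(u,v)})$;
\item $\mu_\kappa(\ell:0,w\in\ell)=-\log(1-\tau_\kappa^{2|w|})$.
\end{itemize}
Since $\tau_\kappa^2\le \frac1{d-1}\le\frac12$ and $-\log(1-x)\le 2x$ for $x\in[0,\frac12]$, each mass is at most twice the corresponding power of $\tau_\kappa$. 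So this sub-case contributes at most
\[
4\alpha^2\tau_\kappa^{2d(u,v)+2|w|} = 4\alpha^2\tau_\kappa^{2(|u|+|v|+|w|)}.
\]
The three sub-cases together give $12\alpha^2$ times the target power.

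\emph{Three distinct loops.} The same factorial-moment bound applied to ordered triples gives at most
\[
\alpha^3\prod_{x\in\{u,v,w\}}\bigl(-\log(1-\tau_\kappa^{2|x|})\bigr)\le 8\alpha^3\tau_\kappa^{2(|u|+|v|+|w|)}.
\]

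Summing the three cases and using the distance identity from the first step gives \eqref{eq:loop_3_point}. The only point needing care is the justification of the Poisson bound for \emph{distinct} loops. A loop may appear with multiplicity in $\mathcal L_{\alpha,\kappa}$, and the classes $A$ and $B$ overlap. This is why I count ordered pairs and triples of distinct points of the point process and apply the Mecke formula, rather than arguing via independence of disjoint classes. All other steps are direct applications of Lemma~\ref{L:3_single_loop} and Lemma~\ref{L:single_loop}.
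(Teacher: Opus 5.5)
Your proposal is correct and follows essentially the same route as the paper. Both split by which of $\ell_1,\ell_2,\ell_3$ coincide, use Lemma~\ref{L:3_single_loop} for the single-loop case, and bound the two- and three-loop cases by the Slivnyak--Mecke (factorial moment) formula with \eqref{eq:single_loop_mu_kappa}, giving the same constants $16\alpha$, $12\alpha^2$, $8\alpha^3$; the only cosmetic difference is that you use $-\log(1-x)\le 2x$ on $[0,\frac12]$ where the paper uses $-\log(1-x)\le x/(1-x)$ with $1-\tau_\kappa^2\ge \frac12$.
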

\begin{proof}
We estimate the probability in \eqref{eq:loop_3_point} by considering different cases according to which of the loops $\ell_1,\ell_2,\ell_3$ coincide. 

If $\ell_1=\ell_2=\ell_3$, by Lemma~\ref{L:3_single_loop}, 
\begin{align*}
\P\big(\exists\, \ell_1\in \mathcal L_{\alpha,\kappa}\,:\,0 \xleftrightarrow{\ell_1} u, 0 \xleftrightarrow{\ell_1} v, 0 \xleftrightarrow{\ell_1} w\big) 
&= \P\big(\exists\,\ell_1\in\mathcal L_{\alpha,\kappa}\,:\, u,v,w\in\ell_1\big)\\
&\leq 16\alpha\, \tau_\kappa^{d(u,v) + d(u,w) + d(v,w)}.
\end{align*}

If $\ell_1\neq\ell_2=\ell_3$, let $A = \{ \ell : 0,u \in \ell \}$, $B = \{\ell: v,w \in \ell\}$, and consider 
\[
L_{A,B} = \sum_{\ell_1 \in \mathcal L_{\alpha,\kappa}} \sum_{\ell_2 \in \mathcal L_{\alpha,\kappa} \setminus \{\ell_1\}} \mathds 1_{\ell_1 \in A} \mathds 1_{\ell_2 \in B}.
\]
By the Markov inequality, 
\begin{align*}
\P\big(\exists\, \ell_1\neq\ell_2\in \mathcal L_{\alpha,\kappa}\,:\,0 \xleftrightarrow{\ell_1} u, 0 \xleftrightarrow{\ell_2} v, 0 \xleftrightarrow{\ell_2} w\big) 
&=\P\big(\exists\, \ell_1\neq\ell_2\in \mathcal L_{\alpha,\kappa}\,:\,0 \xleftrightarrow{\ell_1} u, v \xleftrightarrow{\ell_2} w\big)\\
&= \P(L_{A,B}\geq 1) \leq \E[L_{A,B}].
\end{align*}
The Slivnyak-Mecke theorem from the Palm theory for general
Poisson point processes (see e.g.\ \cite[Theorem 3.3]{MW04}, where it is proved for Poisson point processes in $\R^d$, and \cite[Chapter 13.1]{DVJ08} for the theory of Palm distributions in general spaces) implies that 
\[
\E[L_{A,B}] = 
\E\big[\sum_{\ell \in \mathcal L_{\alpha,\kappa}} \mathds 1_{\ell \in A}\big]\,\E\big[\sum_{\ell \in \mathcal L_{\alpha,\kappa}}\mathds 1_{\ell \in B}\big]= 
\big(\alpha\mu_\kappa(A)\big)\big(\alpha\mu_\kappa(B)\big).
\]
Thus, by \eqref{eq:single_loop_mu_kappa}, 
\begin{align*}
\E[L_{A,B}] &= \alpha^2 \big(-\log(1-\tau_\kappa^{2d(u,v)})\big)\big(- \log(1-\tau_\kappa^{2d(0,w)})\big)\\
&\leq \alpha^2 \frac{\tau_\kappa^{2(d(0,u) + d(v,w))}}{(1-\tau_\kappa^2)^2}
\leq 4 \alpha^2 \tau_\kappa^{d(u,v) + d(u,w) + d(v, w)},
\end{align*}
where we used the bound $-\log(1-x) \leq x/(1-x)$ and $1-\tau_\kappa^{2} \geq (d-2)/(d-1) \geq 1/2$. Thus, 
\[
\P\big(\exists\, \ell_1\neq\ell_2=\ell_3\in \mathcal L_{\alpha,\kappa}\,:\,0 \xleftrightarrow{\ell_1} u, 0 \xleftrightarrow{\ell_2} v, 0 \xleftrightarrow{\ell_3} w\big) \leq 4\alpha^2\, \tau_\kappa^{d(u,v) + d(u,w) + d(v,w)}.
\]
The cases $\ell_2\neq\ell_3=\ell_1$ and $\ell_3\neq\ell_1=\ell_2$ follow from the previous one by renaming the vertices $u,v,w$. 

Finally, when all three loops $\ell_1,\ell_2,\ell_3$ are distinct, we proceed as in the previous case. By the Slivnyak-Mecke theorem, the expected number of tuples $(\ell_1,\ell_2,\ell_3)$ of pairwise distinct loops satisfying $0 \xleftrightarrow{\ell_1} u, 0 \xleftrightarrow{\ell_2} v, 0 \xleftrightarrow{\ell_3} w$ equals 
\[
\big(\alpha\mu_\kappa(\ell\,:\,0 \xleftrightarrow{\ell} u)\big)
\big(\alpha\mu_\kappa(\ell\,:\,0 \xleftrightarrow{\ell} v)\big)
\big(\alpha\mu_\kappa(\ell\,:\,0 \xleftrightarrow{\ell} w)\big),
\]
which, arguing as before, is at most 
$8 \alpha^3 \tau_\kappa^{2(d(0,u) + d(0,v) + d(0,w))}= 8 \alpha^3 \tau_\kappa^{d(u,v) + d(u,w) + d(v, w)}$. Hence
\[
\P\big(\exists\, \ell_1\neq\ell_2\neq\ell_3\in \mathcal L_{\alpha,\kappa}\,:\,0 \xleftrightarrow{\ell_1} u, 0 \xleftrightarrow{\ell_2} v, 0 \xleftrightarrow{\ell_3} w\big) \leq 8\alpha^3\, \tau_\kappa^{d(u,v) + d(u,w) + d(v,w)}.
\]

Putting the three cases together gives \eqref{eq:loop_3_point}.
\end{proof}

\begin{lemma} \label{L:single_loop_to_boundary}
    For all $\kappa \geq \kappa_c$, we have
\[
\mu_\kappa(\ell : 0 \in \ell, \ell \cap \partial B(r) \neq \emptyset) = - \log\Big(1- \frac{\tau^{2r}_\kappa d(d-1)^{r-1}}{1 + \sum_{k=1}^{r-1} (d-2)(d-1)^{k-1} \tau^{2k}_\kappa + (d-1)^r\tau^{2r}_\kappa} \Big).
\]
\end{lemma}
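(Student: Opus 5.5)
Write $S=\partial B(r)$ and $\tau=\tau_\kappa$. The plan is inclusion--exclusion combined with Lemma~\ref{L:loop_det}:
\[
\mu_\kappa(\ell: 0\in\ell,\ \ell\cap S\neq\emptyset) = \mu_\kappa(\ell:0\in\ell) + \mu_\kappa(\ell:\ell\cap S\neq\emptyset) - \mu_\kappa(\ell:\ell\cap(S\cup\{0\})\neq\emptyset).
\]
Each term is a $\log\det$ of a restricted Green matrix, so the quantity equals
\[
\log G^\kappa(0,0) + \log\det G^\kappa|_S - \log\det G^\kappa|_{S\cup\{0\}}.
\]
By the Schur complement,
\[
\det G^\kappa|_{S\cup\{0\}} = \det G^\kappa|_S\,\big(G^\kappa(0,0) - g^T (G^\kappa|_S)^{-1} g\big),
\]
where $g=(G^\kappa(0,s))_{s\in S}$. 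Substituting gives
\[
\mu_\kappa(\ell: 0\in\ell,\ \ell\cap S\neq\emptyset) = -\log\Big(1 - \frac{g^T (G^\kappa|_S)^{-1} g}{G^\kappa(0,0)}\Big).
\]
Everything thus reduces to computing the quadratic form $g^T (G^\kappa|_S)^{-1} g$.

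By Lemma~\ref{l:greens_f}, $g = c\,\tau^r\mathbf 1$ and $G^\kappa(s,s')=c\,\tau^{d(s,s')}$ with $c=G^\kappa(0,0)$. So the remaining task is $\mathbf 1^T M^{-1}\mathbf 1$, where $M=(\tau^{d(s,s')})_{s,s'\in S}$. By symmetry of the tree, every row sum of $M$ is the same number $\sigma$, hence $M\mathbf 1=\sigma\mathbf 1$ and $\mathbf 1^T M^{-1}\mathbf 1 = |S|/\sigma$. Here $M$ is positive definite as a principal submatrix of the Green matrix, so $\sigma>0$ and the inversion is legitimate.

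To compute $\sigma$, fix $s\in S$ and count the $s'\in S$ at distance $2k$, according to the level $r-k$ at which the geodesic from $s$ to $s'$ branches:
\begin{itemize}
\item $k=0$: one vertex (namely $s'=s$).
\item $1\le k\le r-1$: $(d-2)(d-1)^{k-1}$ vertices.
\item $k=r$: $(d-1)^{r}$ vertices, since the branch point is $0$, which has $d-1$ other subtrees, each contributing $(d-1)^{r-1}$ vertices.
\end{itemize}
Hence
\[
\sigma = 1+\sum_{k=1}^{r-1}(d-2)(d-1)^{k-1}\tau^{2k}+(d-1)^r\tau^{2r}.
\]
With $|S|=d(d-1)^{r-1}$ this gives
\[
\frac{g^T(G^\kappa|_S)^{-1}g}{G^\kappa(0,0)} = \frac{c^2\tau^{2r}\,|S|/\sigma}{c\cdot c} = \frac{\tau^{2r} d(d-1)^{r-1}}{\sigma},
\]
which is the claimed formula.

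The main care points are as follows. The first is justifying the $\log\det$ identity for the union and for $S$; $S$ is finite, so Lemma~\ref{L:loop_det} applies directly. The second is checking the neighbour count in the boundary-sphere row sum. The count is the only step where an off-by-one error could occur, so I would verify it against $r=1$: there $\sigma=1+(d-1)\tau^2$, and the formula should reduce to $-\log\big(1-d\tau^2/(1+(d-1)\tau^2)\big)$.
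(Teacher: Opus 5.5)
Your proposal is correct and follows essentially the same route as the paper. Both use inclusion--exclusion with the $\log\det$ formula, a Schur-complement (block) determinant for the matrix bordered by the origin, and the fact that $\mathbf{1}$ is an eigenvector of $(\tau_\kappa^{d(s,s')})_{s,s'\in\partial B(r)}$, whose eigenvalue is the sphere row sum. The only difference is cosmetic: the paper factors out $G^\kappa(0,0)$ at the start, while you carry the constant $c$ through to the end.
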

\begin{proof}
    Write $K_r$ for the matrix with $K_r(x,y) = \tau^{d(x,y)}_\kappa$ for $x,y \in \partial B(r)$, and let $\mathbf{1}_r$ be the vector with $| \partial B(r)|$ many ones. Lemmas~\ref{l:greens_f} and \ref{L:loop_det} give that
    \begin{align*}
         \mu_\kappa(\ell : 0 \in \ell, \ell \cap \partial B(r) \neq \emptyset) &=  \mu_\kappa(\ell : 0 \in \ell) +  \mu_\kappa(\ell :  \ell \cap \partial B(r) \neq \emptyset) \\
         &\hspace{2cm}-  \mu_\kappa(\ell :  \ell \cap (\partial B(r) \cup \{0\}) \neq \emptyset) \\
         &= \log G^\kappa(0,0) + \log \det \big( G^{\kappa}(0,0) \cdot K_r\big) \\
         &\hspace{2cm}- \log \det \Big( G^{\kappa}(0,0) \begin{pmatrix}
             K_r & \tau^{r}_\kappa \mathbf{1}_r \\
             \tau^r_\kappa \mathbf{1}^T_r & 1
         \end{pmatrix} \Big)  \\
         &= \log \det  K_r - \log \det \begin{pmatrix}
             K_r & \tau^{r}_\kappa \mathbf{1}_r \\
             \tau^r_\kappa \mathbf{1}^T_r & 1
         \end{pmatrix}.
    \end{align*}
    The block matrix has determinant equal to $\det(K_r)( 1 - \tau^{2r}_\kappa \mathbf{1}^T_r K_r^{-1} \mathbf{1}_r)$, and hence
\[
\mu_\kappa(\ell : 0 \in \ell, \ell \cap \partial B(r) \neq \emptyset) = - \log( 1 - \tau^{2r}_\kappa \mathbf{1}^T_r K_r^{-1} \mathbf{1}_r).
\]
The vector $\mathbf 1_r$ is an eigenvector of $K_r$, since for every $u \in \partial B(r)$ one has
\begin{equation}
(K_r \mathbf{1}_r)_u  = \sum_{v \in \partial B(r)} \tau^{d(u,v)}_\kappa = 1 + \sum_{k=1}^{r-1} (d-2)(d-1)^{k-1} \tau^{2k}_\kappa + (d-1)^r\tau^{2r}_\kappa  =: A_r. \label{eq:def_A_r}
\end{equation}
Therefore,
\[
\tau^{2r}_\kappa \mathbf{1}^T_r K_r^{-1} \mathbf{1}_r = \tau^{2r}_\kappa \frac{|\partial B(r)|}{A_r} = \tau^{2r}_\kappa \frac{d(d-1)^{r-1}}{A_r}, 
\]
and hence
\[
\mu_\kappa(\ell : 0 \in \ell, \ell \cap \partial B(r) \neq \emptyset) = - \log( 1 - \frac{\tau^{2r}_\kappa d(d-1)^{r-1}}{A_r}). \qedhere
\]
\end{proof}

\medskip

\section{Two-point function decay rate} \label{sec:decay_rate}
In this section, we identify the exponential decay rate $\lambda(\alpha,\kappa)$ of the two-point function $\P_{\alpha,\kappa}(0 \leftrightarrow x_r)$. While the existence of the logarithmic asymptotics of $\P_{\alpha,\kappa}(0 \leftrightarrow x_r)$ follows easily from the supermultiplicativity of the two-point function and the Fekete lemma, Theorem~\ref{T:two_point_order} gives a precise asymptotics of the two-point function and provides an equation for its exponential decay rate. 
\begin{theorem} \label{T:two_point_order}
    Let $\alpha > 0$ and $\kappa \geq \kappa_c$. There exist $\lambda = \lambda(\alpha, \kappa)$ and $\psi = \psi(\alpha,\kappa) > 0$ such that for all $r \geq 1$,
    \begin{equation}\label{eq:psi_two_point}
        \P_{\alpha,\kappa}( 0 \leftrightarrow x_r) = \psi\lambda^r\, (1 + o_{\alpha,\kappa}(1)). 
    \end{equation}
    Furthermore, $\lambda$ is monotone increasing in $\alpha$, and it is the unique solution in $(\tau^2_\kappa, 1)$ to
    \begin{equation} \label{eq:two_point_order}
         (\lambda^{-1} -1) \sum_{n \geq 0} \big( (1-\tau_\kappa^{2(n+1)})^{-\alpha} -1 \big) \lambda^{-n}  = 1.
    \end{equation}
\end{theorem}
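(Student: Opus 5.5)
The plan is to reduce the two-point function to a renewal problem on the ray $(x_n)_{n\ge0}$, following the outline in the introduction.

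\textbf{Step 1: reduce connection to open edges.} Call an edge $(x_{s-1},x_s)$ of the ray closed if no open loop traverses it. Every loop on a tree that visits two vertices also visits the whole geodesic between them. Hence $0\leftrightarrow x_n$ holds if and only if all edges $(x_{s-1},x_s)$ with $1\le s\le n$ are open.

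\textbf{Step 2: compute $q_n$.} Let $q_n$ be the probability that the edge $(x_{n-1},x_n)$ is closed, given that $(x_{-1},x_0)$ is closed (with $q_0=1$). Equivalently, by translation invariance, $q_n$ is the probability that $x_n$ is a renewal point given one at $0$.
\begin{itemize}
\item Loops are independent across disjoint sets of loops.
\item The event ``edge $(x_{s-1},x_s)$ closed'' is the absence of loops visiting both $x_{s-1}$ and $x_s$.
\item Given that $(x_{-1},x_0)$ is closed, one needs the probability that no open loop contains $x_{n-1}$ and $x_n$ but avoids $x_{-1}$.
\end{itemize}
By Lemma~\ref{l:single_loop_2-1} and inclusion–exclusion on which loops remain, this should give
\[
q_n=\frac{\P(e_0,e_n\text{ closed})}{\P(e_0\text{ closed})}=\big(1-\tau_\kappa^2\big)^{\alpha}\big(1-\tau_\kappa^{2(n+1)}\big)^{-\alpha}.
\]
Here the joint measure $\mu_\kappa(\ell\ni x_{-1},x_0\text{ or }\ell\ni x_{n-1},x_n)$ is computed from Lemma~\ref{L:single_loop}: loops through both edges contain $x_{-1}$ and $x_n$, with weight $-\log(1-\tau_\kappa^{2(n+1)})$.

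\textbf{Step 3: the renewal structure.} Show that the positions of closed edges form a renewal (regenerative) sequence. The key fact is that, conditionally on $e_s$ being closed, the loops crossing to the left and to the right of $e_s$ are independent, since no loop crosses $e_s$. Then $\P(0\leftrightarrow x_n)$ is the probability that the first renewal after time $0$ (started from a stationary/delayed configuration) occurs after time $n$.

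Precisely, set $f_n=\P(\text{first renewal at }n\mid\text{renewal at }0)$. Then the undelayed two-point function equals $\sum_{m>n}f_m$ up to a delay factor, and
\[
F(z)=1-1/Q(z)\quad\text{for }|z|<1.
\]

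\textbf{Step 4: singularity analysis.} Since $q_n\to(1-\tau_\kappa^2)^\alpha<1$, $Q$ has a simple pole at $z=1$. Writing $Q(z)=(1-\tau_\kappa^2)^\alpha\big[(1-z)^{-1}+R(z)\big]$ with
\[
R(z)=\sum_{n\ge0}\big((1-\tau_\kappa^{2(n+1)})^{-\alpha}-1\big)z^n,
\]
$R$ is analytic for $|z|<\tau_\kappa^{-2}$. The tail sum $\sum_{m>n}f_m$ has generating function $(1-F(z))/(1-z)=1/((1-z)Q(z))$. Its singularities are the zeros of
\[
(1-\tau_\kappa^2)^\alpha\big(1+(1-z)R(z)\big).
\]
On the positive axis the function $z\mapsto(z-1)R(z)$ is increasing from $0$ at $z=1$ to $+\infty$ as $z\uparrow\tau_\kappa^{-2}$, because $R$ has nonnegative coefficients and diverges there. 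So there is a unique real zero $z_0=\lambda^{-1}\in(1,\tau_\kappa^{-2})$, which is exactly equation \eqref{eq:two_point_order}. Monotonicity of $\lambda$ in $\alpha$ follows because the coefficients of $R$ increase with $\alpha$.

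\textbf{Main obstacle.} The hard part is to show that $z_0$ is a simple zero and that there are no other zeros in $|z|\le z_0+\epsilon$ besides $z_0$. Given this, standard transfer gives coefficient asymptotics $\psi\lambda^n(1+o(1))$ with $\psi>0$ equal to a residue. Simplicity follows from the strict monotonicity of $(z-1)R(z)$ on the real axis. To rule out other zeros on $|z|=z_0$, first try a positivity/aperiodicity argument: the coefficients of $(1-z)R(z)$ need not be positive, so instead write $1+(1-z)R(z)=0$ as $\sum_n c_n z^{n}=1$ with $c_n\ge0$ using the expansion of $F$ (the $f_n\ge0$, aperiodic since $f_1>0$). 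The equation $F(z)=1$ in the disc $|z|\le z_0$ then has only $z=z_0$ by the triangle inequality with strict inequality off the real axis.

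The remaining technical point is that $F$, a priori defined for $|z|<1$, extends meromorphically to $|z|<\tau_\kappa^{-2}$ via $F=1-1/Q$. The positivity $f_n\ge0$ then ensures its power series converges up to its first singularity $z_0$; a Rouché-type count, in the spirit of Lemma~\ref{L:num_zeros}, controls the zeros just beyond.
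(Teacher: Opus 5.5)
\textbf{Gap: excluding other zeros on $|z|=z_0$.} Your Steps 1--3 and the real-axis part of Step 4 match the paper: the renewal process of closed edges, $q_n=(1-\tau_\kappa^2)^\alpha(1-\tau_\kappa^{2(n+1)})^{-\alpha}$, $F=1-1/Q$, the unique real zero $z_0=\lambda^{-1}\in(1,\tau_\kappa^{-2})$, and monotonicity in $\alpha$. The gap is in what you call the main obstacle, namely ruling out zeros of $\widetilde Q$ other than $z_0$ on the circle $|z|=z_0$. You propose to recast $1+(1-z)R(z)=0$ as $F(z)=1$ and then use $f_n\ge 0$, $f_1>0$ and the triangle inequality. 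But zeros of $\widetilde Q$ are \emph{poles} of $F=1-1/\widetilde Q$. The equation $F(z)=1$ means $1/\widetilde Q(z)=0$, and its only solution is the pole $z=1$ of $Q$; that is the aperiodicity step of the renewal theorem, not the one needed here. On $|z|=z_0$ the power series of $F$ diverges, so nonnegativity of the $f_n$ gives no control there. Pringsheim's theorem only shows that the radius of convergence of $F$ is $z_0$, i.e.\ that $\widetilde Q$ has no zeros in $|z|<z_0$. A nonnegative aperiodic series can still have further poles on its circle of convergence, e.g.\ $\frac{1}{1-z}+\frac{1}{2(1+z)}$. The closing appeal to a ``Rouch\'e-type count'' does not handle the circle either.

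\textbf{How to repair it.} The positivity idea works if you use the right coefficients. Since $q_0=1$ and $q_n$ is strictly decreasing,
\[
(1-z)\widetilde Q(z)=q^*\big(1+(1-z)R(z)\big)=1-\sum_{n\ge1}c_nz^n,\qquad c_n=q_{n-1}-q_n>0 .
\]
So the non-constant coefficients of $(1-z)R(z)$ are not positive, but they all have the same sign, which is all the triangle inequality needs. The zeros of $\widetilde Q$ in $D$ are then exactly the solutions of $\sum_{n\ge1}c_nz^n=1$ in $B_{1/\tau_\kappa^2}$; $z=1$ is not one, since $\sum_n c_n=1-q^*<1$. For $|w|\le z_0$ you get $|\sum c_nw^n|\le\sum c_n|w|^n\le 1$. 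Equality forces $|w|=z_0$ and every $c_nw^n$ to be a positive real, hence $w=z_0$ because $c_1>0$.

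Zeros are finite in number on compact subsets of $B_{1/\tau_\kappa^2}$, so this yields the radius $R>z_0$ needed for the transfer theorem. Also $\sum_n nc_nz_0^{n-1}>0$ gives simplicity of $z_0$ and positivity of the residue, hence $\psi>0$, at once. This route would be shorter than the paper's, which writes $\widetilde Q(z)=q^*\sum_{k\ge0}\binom{\alpha+k-1}{k}\frac{\tau_\kappa^{2k}}{1-z\tau_\kappa^{2k}}$ and shows $\Im\widetilde Q(z)\neq 0$ off the real axis, thereby excluding zeros in all of $D$.

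\textbf{Smaller point: the delay.} ``Up to a delay factor'' should be replaced by the exact identity $\P_{\alpha,\kappa}(0\leftrightarrow x_r)=\P_{\alpha,\kappa}(x_{-1}\centernot\leftrightarrow 0)\sum_{k\ge r}Q(0\leftrightarrow x_k)$. You get it by decomposing according to the last closed edge to the left of $x_0$. The delayed tail is thus a sum of undelayed tails rather than a constant multiple, although the resulting asymptotics are the same.
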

\begin{remark}\label{R:lambda>tau^2}
Notice that $\lambda$ from Theorem~\ref{T:two_point_order} always satisfies $\lambda > \tau_\kappa^2$. In particular, by Lemma~\ref{L:single_loop} this implies that
\[
 \P\big(\exists \ell\in\mathcal L_{\alpha,\kappa}\,:\,0 \xleftrightarrow{\ell} v\,\big|\,0 \leftrightarrow v\big) \longrightarrow 0,\, \text{ as }\,d(0,v)\to\infty,
\]
so that for any $\alpha > 0$, the main contribution to the two-point function does not come from a single loop. In contrast, the two-point function of the subcritical loop percolation on $\mathbb Z^d$ in dimensions $d\geq 5$ is comparable with the probability of connection by a single loop, see \cite[Theorem~1.4]{CS16} (but not comparable in low dimensions $d=3,4$, see \cite[Theorems~1.5 and 1.6]{CS16}). 
\end{remark}

\smallskip

\begin{proof}[Proof of Theorem~\ref{T:two_point_order}]
Fix $\alpha>0$ and $\kappa\geq \kappa_c$. We begin by defining a renewal process of successive closed edges along the ray $(x_n)_{n\in\Z}$. 
A similar renewal process was considered in Section~3 of \cite{LJL13}. 
Let $Z_0=0$ and let $Z_n$ be the right endpoint of the $n$-th successive closed edge among the edges $(x_{i-1},x_i)$, $i\geq 1$. 

Notice that conditioned on an edge $(x_{s-1}, x_s)$ being closed, the sets of loops in $\mathcal L_{\alpha,\kappa}$ that intersect edges $(x_{i-1},x_i)$ with $i<s$ and of those that intersect edges $(x_{j-1},x_j)$ with $j>s$ are disjoint. Therefore, by the independence property of the Poisson point process and translation invariance, the increments $\xi_i=Z_i-Z_{i-1}$ are independent and satisfy
\[
\P(\xi_i = r) = 
\P_{\alpha,\kappa}( 0\leftrightarrow x_{r-1}, 0\centernot\leftrightarrow x_r \mid x_{-1} \centernot\leftrightarrow x_0), \quad\text{for } i\geq 2.
\]
Furthermore, conditionally on the edge $(x_{-1}, x_0)$ being closed, $\xi_1$ also has the same distribution. More precisely, if we define the measure 
\begin{equation}\label{def:Q}
Q(\cdot) = \P_{\alpha,\kappa}( \cdot \mid x_{-1} \centernot\leftrightarrow x_0), 
\end{equation}
then $(\xi_i)_{i \geq 1}$ are i.i.d.\ under $Q$ with the law governed by
\begin{equation}
     Q(\xi_1 \geq r+1) = Q(0 \leftrightarrow x_r). \label{eq:renew_xr_xi}
\end{equation}
In particular, if we can estimate the distribution of $\xi_1$, we can obtain bounds on connection probabilities under $Q$.

\smallskip

Let $q_n$ be the probability that there is a renewal at time $n$, 
\[
q_n:= Q((x_{n-1},x_n)\text{ is closed}) = Q(x_{n-1} \centernot\leftrightarrow x_n).
\]
\begin{lemma}\label{l:qn}
    For $n\geq 1$, 
    \begin{align}
    q_n = (1 - \tau_\kappa^{2})^\alpha (1 -\tau_\kappa^{2(n+1)})^{- \alpha}. \label{eq:def_q_n}
    \end{align}
\end{lemma}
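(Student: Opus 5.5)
We need $q_n = Q(x_{n-1}\centernot\leftrightarrow x_n) = \P(\text{edges }(x_{-1},x_0),(x_{n-1},x_n)\text{ both closed})/\P((x_{-1},x_0)\text{ closed})$.

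A loop on a tree traverses an edge $(a,b)$ if and only if it visits both $a$ and $b$. So the edge $(x_{-1},x_0)$ is closed exactly when no loop of $\mathcal L_{\alpha,\kappa}$ visits both $x_{-1}$ and $x_0$. By Lemma~\ref{L:single_loop} with $d(x_{-1},x_0)=1$, this has probability $(1-\tau_\kappa^2)^\alpha$.

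For the joint event, the plan is to use the Poisson property: the probability that both edges are closed is $\exp(-\alpha\mu_\kappa(A\cup B))$, where $A=\{\ell: x_{-1},x_0\in\ell\}$ and $B=\{\ell: x_{n-1},x_n\in\ell\}$. Inclusion–exclusion gives
\[
\mu_\kappa(A\cup B)=\mu_\kappa(A)+\mu_\kappa(B)-\mu_\kappa(A\cap B).
\]
The key observation is the tree structure. A loop visiting $x_{-1}$ and $x_n$ must pass along the whole geodesic between them, so it visits all four vertices. Conversely, a loop visiting all four vertices certainly visits $x_{-1}$ and $x_n$. Hence
\[
A\cap B=\{\ell: x_{-1},x_n\in\ell\},
\]
and by \eqref{eq:single_loop_mu_kappa} its measure is $-\log(1-\tau_\kappa^{2(n+1)})$. (This is the same reasoning as in Lemma~\ref{l:single_loop_2-1}.)

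Therefore
\[
\P(\text{both closed})=(1-\tau_\kappa^2)^{2\alpha}(1-\tau_\kappa^{2(n+1)})^{-\alpha}.
\]
Dividing by $(1-\tau_\kappa^2)^\alpha$ yields \eqref{eq:def_q_n}.

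The only subtle step is the identity for $A\cap B$, which depends entirely on the tree geometry. Every other step is a direct application of Lemma~\ref{L:single_loop} and the Poisson void probability $\exp(-\alpha\mu_\kappa(\cdot))$.
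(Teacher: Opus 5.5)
Your proof is correct and is essentially the paper's argument. The paper splits the loops into the disjoint classes $A$ and $B\setminus A$, uses Poisson independence, and invokes Lemma~\ref{l:single_loop_2-1}. You instead compute the void probability of $A\cup B$ by inclusion--exclusion, which is the same computation $\mu_\kappa(B\setminus A)=\mu_\kappa(B)-\mu_\kappa(\ell: x_{-1},x_n\in\ell)$, resting on the same tree-geometry identity.
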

\begin{proof} 
We have 
\begin{align*}
    q_n &= 1 - Q(x_{n-1} \leftrightarrow x_n)
    = 1 - \frac{\P_{\alpha,\kappa}(x_{n-1} \leftrightarrow x_n, x_{-1} \centernot\leftrightarrow x_0)}{\P(x_{-1} \centernot\leftrightarrow x_0)}\\ 
    &= 1- \frac{\P(\{\exists \ell\in\mathcal L_{\alpha,\kappa}\,:\,x_{n-1},x_n\in\ell\text{ and }x_{-1}\notin \ell\}\cap\{\nexists \ell \in\mathcal L_{\alpha,\kappa}\,:\, x_{-1},x_0\in \ell\})}{\P(\nexists \ell \in\mathcal L_{\alpha,\kappa}\,:\, x_{-1},x_0\in \ell)} \\
    &\stackrel{(*)}= 1- \P(\exists \ell\in\mathcal L_{\alpha,\kappa}\,:\,x_{n-1},x_n\in\ell\text{ and }x_{-1}\notin \ell)\\
    &\stackrel{\eqref{eq:single_loop_2-1}}=(1 - \tau_\kappa^{2})^\alpha (1 -\tau_\kappa^{2(n+1)})^{- \alpha},
\end{align*}
where ($*$) holds by the independence of the events $\{\exists \ell\in\mathcal L_{\alpha,\kappa}\,:\,x_{n-1},x_n\in\ell\text{ and }x_{-1}\notin \ell\}$ and $\{\nexists \ell \in\mathcal L_{\alpha,\kappa}\,:\, x_{-1},x_0\in \ell\}$ as they are defined by disjoint sets of loops in $\mathcal L_{\alpha,\kappa}$. 
\end{proof}

We use the method of generating functions to identify the law of $\xi_1$ from the renewal measure $(q_n)_{n\geq 0}$. For $z\in\C$, let 
\[
Q(z) = 
\sum_{n \geq 0} q_n z^n \quad \text{and} \quad 
F(z) = \sum_{n \geq 0} Q(\xi_1 = n) z^n = \E_Q[z^{\xi_1}].
\]
For $|z|<1$, $Q(z)$ and $F(z)$ converge and 
\begin{align*}
    Q(z) - 1 &=\sum_{n \geq 1 } Q( \exists k\geq 1\,:\,Z_k=n) z^n = \sum_{n \geq 1 } \sum_{k \geq 1} Q( \xi_1 + \cdots + \xi_k = n) z^n \\
    &=\sum_{k \geq 1 } \sum_{n \geq 1} Q( \xi_1 + \cdots + \xi_k = n) z^n 
    =\sum_{k \geq 1 } \E_Q[z^{\xi_1 + \cdots + \xi_k}] \\
    &=\sum_{k \geq 1 } \E_Q[z^{\xi_1}]^k = \sum_{k\geq 1} F(z)^k =     
    \frac{1}{1 - F(z)} -1.
 \end{align*}
Thus, for $|z|<1$,
\begin{equation}
    Q(z) = \frac{1}{1 - F(z)} \quad \text{and} \quad
    F(z) = 1 - \frac{1}{Q(z)}. \label{eq:powerseries_FQ}
\end{equation}

Using this power series representation, we will prove in Section~\ref{S:complex_ana} (with the help of some complex analysis) the following proposition about the decay rate of the holding times $\xi_i$.
\begin{proposition} \label{P:xi_order}
    Let $\alpha >0$ and $\kappa \geq \kappa_c$. There exist $\lambda = \lambda(\alpha, \kappa)$ and $ \psi_1 = \psi_1(\alpha,\kappa)$  such that 
\[
Q(\xi_1 = n) = \psi_1 \lambda^n\,(1 + o_{\alpha,\kappa}(1)).
\]
    Furthermore, $\lambda$ is the unique solution in $(\tau_\kappa^2, 1)$ to
    \begin{equation}
         (\lambda^{-1} -1) \sum_{n \geq 0} \Big( (1-\tau_\kappa^{2(n+1)})^{-\alpha} -1 \Big) \lambda^{-n}  = 1. \label{eq:sol_to_sum}
    \end{equation}
\end{proposition}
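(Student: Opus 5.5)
We will extract the asymptotics of $Q(\xi_1=n)$ from the singularity structure of $F(z)=1-1/Q(z)$, where $Q(z)=\sum_{n\ge0}q_nz^n$ with $q_n$ given by Lemma~\ref{l:qn} (and $q_0=1$). The first step is to rewrite $Q$ in a form that continues analytically beyond the unit disc. Write $c=(1-\tau_\kappa^2)^\alpha$ and $q_n = c + c\,a_n$ with $a_n=(1-\tau_\kappa^{2(n+1)})^{-\alpha}-1$, which is of order $\alpha\tau_\kappa^{2(n+1)}$. Then, for $|z|<1$,
\[
Q(z) = 1 - c + \frac{c}{1-z} + c\sum_{n\ge1} a_n z^n ,
\]
and hence
\[
\frac{1}{Q(z)}=\frac{1-z}{c\,H(z)},\qquad H(z)=1+(1-z)\Big(c^{-1}-1+\sum_{n\ge1}a_nz^n\Big).
\]
Since $c^{-1}-1=a_0$, this simplifies to $H(z)=1+(1-z)\sum_{n\ge0}a_nz^n$. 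The function $H$ is analytic in $|z|<\tau_\kappa^{-2}$. Setting $z=\lambda^{-1}$, the equation $H(z)=0$ reads $(1-\lambda^{-1})\sum_n a_n\lambda^{-n}=-1$, which is exactly \eqref{eq:sol_to_sum}. Therefore $F(z)=1-(1-z)/(cH(z))$ is meromorphic in $|z|<\tau_\kappa^{-2}$, with poles at the zeros of $H$ other than $z=1$; note $H(1)=1\neq0$.

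The second step is to locate the real zero. On $(1,\tau_\kappa^{-2})$ we have $H(z)=1-(z-1)S(z)$ with $S(z)=\sum_n a_nz^n$ increasing and positive. The function $g(z)=(z-1)S(z)$ is strictly increasing from $0$ at $z=1$. As $z\uparrow\tau_\kappa^{-2}$, we get $S(z)\ge\sum_n \alpha\tau_\kappa^{2(n+1)}z^n\to\infty$, since $a_n\ge \alpha\tau_\kappa^{2(n+1)}$. Hence there is a unique root $z_0\in(1,\tau_\kappa^{-2})$, which gives the uniqueness of $\lambda=z_0^{-1}\in(\tau_\kappa^2,1)$. The root is simple because $g'(z_0)>0$. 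Moreover $z_0\ne1$, so the factor $1-z$ does not cancel it, and the residue of $F$ at $z_0$ is nonzero.

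The third step is the main obstacle: showing that $z_0$ is the unique zero of $H$ in the closed disc $|z|\le z_0$, so that it is the dominant singularity of $F$. Then, by standard transfer (subtract the simple pole and apply Cauchy estimates on a slightly larger circle $|z|=z_0+\varepsilon$, on which $F$ minus its principal part is analytic), we obtain $Q(\xi_1=n)=\psi_1 z_0^{-n}(1+o(1))$ with $\psi_1=(1-z_0)/(c\,z_0 H'(z_0))>0$.

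For $|z|<1$ this is automatic: $F(z)=\E_Q[z^{\xi_1}]$ is analytic there and $|F(z)|<1$, so $Q=1/(1-F)$ has no poles and $H$ has no zeros. On $|z|=1$ with $z\ne1$, we have $|F(z)|\le1$ with equality only if $\xi_1$ is lattice-supported. But $Q(\xi_1=1)=q_1>0$, so the law is aperiodic and $|F|<1$ there. For $1<|z|\le z_0$ I would try a positivity argument. Because $Q(\xi_1=n)\ge0$, Pringsheim's theorem says the radius of convergence $R$ of $F$ is a singularity on the positive axis, so $R=z_0$. For $|z|<z_0$ we then have $|F(z)|\le F(|z|)<F(z_0^-)$; as a first attempt I would show directly that $F(|z|)<1$ for $|z|<z_0$, which excludes other poles inside the disc. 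On $|z|=z_0$ we have $|F(z)|\le F(z_0)$ with equality only at $z=z_0$ by aperiodicity, so $1-F(z)\ne0$ away from $z_0$, provided the positive-coefficient series of $F$ converges on that circle apart from the pole. If that bookkeeping fails, the fallback is a Rouché/argument-principle count of the zeros of $H$ in $|z|\le z_0+\varepsilon$ (cf.\ Lemma~\ref{L:num_zeros}). The idea is to compare $H(z)$ with $1-(z-1)\alpha\tau_\kappa^2/(1-\tau_\kappa^2 z)$, using the explicit rational leading term of $S$ and bounding the remainder $\sum_n (a_n-\alpha\tau_\kappa^{2(n+1)})z^n$ on the circle.
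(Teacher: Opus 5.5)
\textbf{Gap: the boundary circle $|z|=z_0$ is not handled.}

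Your Steps 1--2 are correct and match the paper: your $cH(z)$ is exactly the paper's $q^*+(1-z)H(z)=(1-z)\widetilde Q(z)$. Your Pringsheim argument also correctly shows that $H$ has no zeros in the open disc $|z|<z_0$.

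The circle $|z|=z_0$ cannot be skipped. A competing pole $z_0e^{i\theta}$ there would add an oscillating term of the same order $z_0^{-n}$ and break the claimed asymptotics. Your argument ``$|F(z)|\le F(z_0)$ with equality only at $z_0$'' is empty on this circle. The Taylor series of $F$ has radius of convergence exactly $z_0$, and $F(x)=1+(x-1)/(cH(x))\to+\infty$ as $x\uparrow z_0$. So $\sum_n Q(\xi_1=n)z_0^n=+\infty$, and there is no finite value for the strict triangle inequality to be compared against.

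Two related problems:
\begin{itemize}
\item Your ``first attempt'' claim that $F(|z|)<1$ for $|z|<z_0$ is false. The same formula gives $F(x)>1$ for $1<x<z_0$. Pringsheim already makes that step unnecessary, so this does no damage.
\item The Rouch\'e fallback is not a proof. Comparing $a_n$ with $\alpha\tau_\kappa^{2(n+1)}$ cannot work uniformly in $\alpha$; for instance $a_0=(1-\tau_\kappa^2)^{-\alpha}-1$ grows exponentially in $\alpha$. You also give no bound on the circle.
\end{itemize}

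\textbf{How to close it.} The missing idea is to apply positivity to a series that converges beyond $z_0$. Since $q_n$ is strictly decreasing,
\[
cH(z)=(1-z)Q(z)=1-P(z),\qquad P(z)=\sum_{n\ge1}(q_{n-1}-q_n)z^n .
\]
$P$ has strictly positive coefficients and radius of convergence $\tau_\kappa^{-2}>z_0$. Then $P(z_0)=1$, and for $|z|\le z_0$ you have $|P(z)|\le P(|z|)\le 1$. Equality $P(z)=1$ forces $|z|=z_0$ and forces every term $p_nz^n$ to be a positive real. In particular $p_1z>0$, so $z=z_0$.

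This excludes every other zero of $H$ in the closed disc at once, and it makes your $|z|\le 1$ and Pringsheim steps superfluous. Since zeros are isolated, there is then a zero-free circle $|z|=z_0+\varepsilon$, and your transfer step goes through.

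\textbf{Comparison with the paper.} The paper takes a different route. It rewrites
\[
\widetilde Q(z)=q^*\sum_{k\ge0}\binom{\alpha+k-1}{k}\frac{\tau_\kappa^{2k}}{1-z\tau_\kappa^{2k}},
\]
where every term has imaginary part of the same sign as $\Im(z)$. This rules out non-real zeros in the whole disc $B_{1/\tau_\kappa^2}$. It gives the stronger error $O(R^{-n})$ for any $R<\tau_\kappa^{-2}$, which the paper later uses for uniformity in $\alpha$.
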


It remains to express the connection probabilities under $Q$ in terms of the original law $\P_{\alpha,\kappa}$. We have 
\begin{align}
     \P_{\alpha,\kappa}(0 \leftrightarrow x_r) &= \P_{\alpha,\kappa}(0 \leftrightarrow x_r, x_{-1} \centernot\leftrightarrow 0) + \P_{\alpha,\kappa}(0 \leftrightarrow x_r, x_{-1} \leftrightarrow 0) \nonumber \\
     &=\P_{\alpha,\kappa}( x_{-1} \centernot\leftrightarrow 0)Q(0 \leftrightarrow x_r) + \P_{\alpha,\kappa}(0 \leftrightarrow x_{r+1}) \nonumber \\
     &=  \P_{\alpha,\kappa}(x_{-1} \centernot\leftrightarrow 0) \sum_{k=r}^\infty Q(0 \leftrightarrow x_k). \label{eq:P_Q_sum}
 \end{align} 
By Proposition~\ref{P:xi_order}, 
\[
Q(0 \leftrightarrow x_k) = Q(\xi_1 \geq k+1) = \sum_{n=k+1}^\infty Q(\xi_1 = n) = \psi_1 \frac{\lambda}{1- \lambda} \lambda^{k}\,(1+o_{\alpha,\kappa}(1)), \,\,\text{as }k\to\infty.
\]
Thus, 
    \begin{align*}
        \P_{\alpha,\kappa}(0 \leftrightarrow x_r) &= \psi_1\frac{\lambda}{1- \lambda} \P_{\alpha,\kappa}(x_{-1} \centernot\leftrightarrow 0) \big(\sum_{k=r}^\infty  \lambda^k\big)\, (1+o_{\alpha,\kappa}(1))\\
        &\stackrel{\eqref{eq:single_loop_LS}}=\psi_1 \frac{\lambda}{(1- \lambda)^2} (1-\tau_\kappa^2)^\alpha\, \lambda^{r}\,(1+o_{\alpha,\kappa}(1)),\,\,\text{as }r\to\infty,
    \end{align*}
and \eqref{eq:psi_two_point} holds with 
\begin{equation}\label{eq:psi-psi1}
\psi(\alpha, \kappa) = \psi_1\frac{\lambda}{(1- \lambda)^2} (1-\tau_\kappa^2)^\alpha.
\end{equation}
The proof of Theorem~\ref{T:two_point_order} is completed by noting that $\lambda$ is uniquely determined as the solution to \eqref{eq:sol_to_sum}, and that the left-hand side of \eqref{eq:sol_to_sum} is monotone increasing in $\alpha$ whenever $\lambda^{-1} > 1$. 
\end{proof}

\begin{remark}\label{rem:longrange}
Even though the two-point function $\P_{\alpha,\kappa}(u \leftrightarrow v)$ decays exponentially in $d(u,v)$, the loop percolation exhibits strong spatial correlations for $\kappa\leq 0$. For example, it follows from Lemma~\ref{L:single_loop} that 
\[
\inf_{r\geq 1}\,\mu_0(\ell\,:\,\partial B(r) \xleftrightarrow{\ell} \partial B(2r)) >0.
\]
Thus,
\[
\inf_{r\geq 1}\,\P\big(\exists\,\ell\in\mathcal L_{\alpha,0}\,:\, \partial B(r) \xleftrightarrow{\ell} \partial B(2r)\big) >0.
\]
(For $\kappa>0$, the measure $\mu_\kappa$ of loops intersecting $\partial B(r)$ and $\partial B(2r)$ goes to $0$ exponentially fast, and so does the probability that such a loop exists in $\mathcal L_{\alpha,\kappa}$.)
\end{remark}

\smallskip

\section{Generating functions and growth rate} \label{S:complex_ana}
The goal of this section is to prove Proposition~\ref{P:xi_order}. The strategy is to extract the asymptotic behavior of the coefficients of the generating function $F(z)$ (from \eqref{eq:powerseries_FQ}) by analyzing the singularities of $Q(z)$. To this end, we extend the identity
\[
F(z) = 1 - \frac{1}{Q(z)}
\]
beyond $|z|<1$ to a larger domain in the complex plane. Throughout this section, we write $\tau$ for $\tau_\kappa$. 

\subsection{Analytic extension}
Denote by $B_r = \{ z \in \C : |z| < r \}$ the open ball of radius $r$ centered at $0$ in the complex plane. Recall the definition of $q_n$ from \eqref{eq:def_q_n} and let 
    \begin{equation} \label{eq:def_q^*}
        q^* = (1 - \tau^{2})^\alpha.
    \end{equation}
    Note that $q_n \downarrow q^*$. Consider 
\[
H(z) := \sum_{n\geq 0} (q_n - q^*) z^n = q^* \sum_{n \geq 0} \Big( (1 - \tau^{2(n+1)} )^{-\alpha} - 1\Big) z^n.
\]
This power series converges for $z \in B_{1/\tau^2}$. 

Let 
\[
D:= B_{1/\tau^{2}} \setminus \{ 1\} 
\]
and define for $z \in D$ the analytic extension of $Q(z)$ beyond the unit disk as  
\begin{equation}
    \widetilde{Q}(z) := \frac{q^*}{1-z} +H(z). \label{eq:def_tilde_Q}
\end{equation}
Notice that $\widetilde{Q}(z)$ is an analytic function on $D \subset \C$, and that for $z \in B_1$ the series $Q(z)$ agrees with $\widetilde{Q}(z)$ since
\[
Q(z) = \sum_{n\geq 0} q_n z^n =  \sum_{n\geq 0} (q^* + q_n - q^*) z^n = \sum_{n\geq 0} q^* z^n + H(z) = \frac{q^*}{1-z} + H(z),
\]
which is well defined for all $z \in B_1$. 

\smallskip

Denote by $Z_{\widetilde{Q}} = \{z \in D : \widetilde{Q}(z) = 0\}$ the zeros of $\widetilde{Q}(z)$ in $D$. Consider now
\[
\widetilde{F}(z) = 1 - \frac{1}{\widetilde{Q}(z)} \qquad \forall z \in D \setminus Z_{\widetilde{Q}},
\]
and define $\widetilde{F}(1) = 1$ (the point $z=1$ corresponds to a removable singularity of $1/\widetilde{Q}(z)$). Equivalently, we may write
\[
\widetilde{F}(z) = 1 - \frac{1-z}{q^* + (1-z) H(z)} \qquad \forall z \in B_{1/\tau^2} \setminus Z_{\widetilde{Q}},
\]
where $q^* + (1-z) H(z)$ is analytic in $B_{1/\tau^2}$. Note that the solutions of $\widetilde{Q}(z) = 0$ coincide with the solutions of $q^* + (1-z) H(z) = 0$.

\begin{lemma} \label{L:num_zeros}
    There exists a real number $z_0 \in (1, 1/\tau^{2})$ such that $Z_{\widetilde{Q}} = \{z_0\}$. Furthermore, $z_0$ is a simple zero of $\widetilde{Q}(z)$ and it is given as the unique solution of $z \in D$ to
    \begin{equation}
        (z-1) \sum_{n\geq 0}\Big((1- \tau^{2(n+1)})^{-\alpha}  - 1\Big) z^n = 1. \label{eq:sol_z}
    \end{equation}
\end{lemma}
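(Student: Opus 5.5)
The plan is to work with the entire-on-$B_{1/\tau^2}$ function $g(z) := q^* + (1-z)H(z)$, whose zeros in $D$ are exactly $Z_{\widetilde Q}$. Write $h(z) = H(z)/q^* = \sum_{n\ge0} a_n z^n$ with $a_n = (1-\tau^{2(n+1)})^{-\alpha}-1 > 0$. Then $g(z)=0$ is equivalent to $(z-1)h(z)=1$, which is \eqref{eq:sol_z}.

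\textbf{Step 1: a unique real zero in $(1,1/\tau^2)$.} On the real segment, set $\phi(x) = (x-1)h(x)$. For $x\in(1,1/\tau^2)$ both factors are positive and strictly increasing, so $\phi$ is strictly increasing, with $\phi(1)=0$. Near $1/\tau^2$ we have $a_n \sim \alpha\tau^{2(n+1)}$, hence $h(x)\ge c\sum_n (\tau^2x)^n \to\infty$ as $x\uparrow 1/\tau^2$, and so $\phi(x)\to\infty$. This gives a unique real solution $z_0\in(1,1/\tau^2)$. For real $x\in(-1/\tau^2,1]$ the equation cannot hold: for $x\in[0,1]$ we have $\phi\le0$, and for negative $x$ one has $|h(x)|\le h(|x|)$ but also $|x-1|\ldots$ — this will be absorbed into Step 2.

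\textbf{Step 2: no non-real zeros, and no zeros with $|z|\le$ anything off the positive axis.} Suppose $z\in D$ with $(z-1)h(z)=1$. The key observation I would try is a rewriting that exposes positive coefficients: $(z-1)h(z) = -a_0 + \sum_{n\ge1}(a_{n-1}-a_n)z^n$, and $a_{n-1}-a_n>0$ since $a_n$ is decreasing. So the equation becomes
\[
\sum_{n\ge1} b_n z^n = 1 + a_0,\qquad b_n := a_{n-1}-a_n>0 .
\]
Let $P(z)=\sum_{n\ge1} b_n z^n$, a power series with positive coefficients, converging on $B_{1/\tau^2}$. Then $|P(z)|\le P(|z|)$, with equality only if all $z^n$ with $b_n>0$ share the same argument, i.e.\ $z\ge0$ (here we use $b_1,b_2>0$). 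If $z$ is a zero with $z\notin[0,\infty)$, then $1+a_0 = |P(z)| < P(|z|)$, so $|z|>r_0$ where $r_0$ is the unique positive solution of $P(r)=1+a_0$. Note that $P$ is increasing on $[0,1/\tau^2)$ and $P(r)=1+a_0$ is exactly $\phi(r)=1$, so $r_0=z_0$. This alone does not exclude zeros with $|z|>z_0$, and here lies the main obstacle. I would first try to exploit more structure in $b_n$: expanding $(1-t)^{-\alpha}-1=\sum_{k\ge1}c_k t^k$ with $c_k>0$ gives $a_n=\sum_k c_k\tau^{2k(n+1)}$, so $h(z)=\sum_k c_k\tau^{2k}/(1-\tau^{2k}z)$, a positive combination of Cauchy kernels with poles at $\tau^{-2k}\ge 1/\tau^2$. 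Then
\[
(z-1)h(z)=\sum_k c_k\tau^{2k}\frac{z-1}{1-\tau^{2k}z}.
\]
For $\operatorname{Im} z>0$, each Möbius map $z\mapsto (z-1)/(1-sz)$ with $0<s<1$ has determinant $1-s>0$, so it preserves the upper half-plane. Hence $\operatorname{Im}[(z-1)h(z)]>0$ and the value cannot equal $1$; symmetrically for $\operatorname{Im} z<0$. This handles all non-real $z\in D$ at once.

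\textbf{Step 3: real negative axis, and simplicity.} For real $x\in(-1/\tau^2,1)$, each term $(x-1)/(1-\tau^{2k}x)$ is negative, so $\phi(x)<0\ne1$. Hence $Z_{\widetilde Q}=\{z_0\}$. For simplicity, it suffices to show $\phi'(z_0)>0$, which holds by the strict monotonicity from Step 1 (each term $(x-1)/(1-sx)$ has derivative $(1-s)/(1-sx)^2>0$). Then $g'(z_0)=-q^*\phi'(z_0)\ne0$, and $\widetilde Q = g/(1-z)$ has a simple zero at $z_0$.

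The hardest point is Step 2. The Pick-function (Möbius) representation is what I would rely on, and the only thing to check is that the series over $k$ converges on $D$, which follows since the poles $\tau^{-2k}$ lie outside $B_{1/\tau^2}$ and the $c_k\tau^{2k}$ are summable.
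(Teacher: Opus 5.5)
Your proof is correct and follows essentially the same route as the paper. The paper also gets the real zero from monotonicity on $(1,1/\tau^2)$, expands via the negative binomial series into a positive combination of Cauchy kernels $\tau^{2k}/(1-\tau^{2k}z)$ whose imaginary parts all carry the sign of $\Im z$, and does a separate sign check on the rest of the real axis. The differences are cosmetic: you run the sign argument on $(z-1)h(z)$ through the M\"obius maps $z\mapsto (z-1)/(1-sz)$ rather than on $\widetilde{Q}$ itself, and the positive-coefficient detour at the start of your Step 2 is not needed.
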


\subsection{Proof of Proposition~\ref{P:xi_order}}
Before proceeding to the proof of Lemma~\ref{L:num_zeros}, let us discuss the consequences of $\widetilde{Q}(z)$ having only a single zero in $D$. The function $\widetilde{F}(z)$ is meromorphic in $B_{1/\tau^2}$, that is, it is analytic in $B_{1/\tau^2}$ except at isolated points where $\widetilde{F}(z)$ has a pole of finite order. Indeed, the zeros of $\widetilde{Q}(z)$ precisely correspond to the poles of $\widetilde{F}(z)$. In particular, the only (simple) pole of $\widetilde{F}(z)$ is at $z_0$. 

Since $\widetilde{Q}(z) = Q(z)$ for all $z \in B_1$, it follows that $\widetilde{F}(z) = F(z)$ in a neighbourhood of the origin. Therefore,  $\widetilde{F}(z)$ and $F(z)$ have the same Taylor expansion at $0$, i.e.\
\[
\widetilde{F}(z) = \sum_{n \geq 0} f_n z^n := \sum_{n \geq 0} Q(\xi_1 = n)z^n 
\]
for $z$ where this sum converges.  
We will use the following theorem (see also Remark~\ref{R:expansion_C}).

\begin{theorem}[Theorem~IV.10  of \cite{FS09}] \label{T:expand_mero}
Let $G(z)$ be a function meromorphic at all points of the closed disc $|z| \le R$, with poles at points $z_1, \ldots, z_m$, and Taylor expansion
\[
G(z) = \sum_{n\geq 0} g_n z^n
\]
around the origin.
Assume that $G(z)$ is analytic at all points of $|z| = R$ and at $z = 0$.
Then there exist $m$ polynomials $\{\Pi_j(x)\}_{j=1}^m$ such that
\[
g_n = \sum_{j=1}^{m} \Pi_j(n)z_j^{-n} + O(R^{-n}).
\]
Furthermore, the degree of $\Pi_j$ is equal to the order of the pole of $G(z)$ at $z_j$ minus one.
\end{theorem}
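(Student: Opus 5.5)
The plan is the classical subtraction-of-singularities argument: remove the principal parts of $G$ at its poles, bound the coefficients of what remains with Cauchy's inequality, and expand each principal part explicitly. Since $G$ is analytic at $z=0$ and on the circle $|z|=R$, the poles $z_1,\ldots,z_m$ satisfy $0<|z_j|<R$. There are finitely many of them, because poles are isolated and the closed disc is compact.

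First, for each $j$ write the principal part of $G$ at $z_j$ as
\[
P_j(z)=\sum_{k=1}^{r_j}\frac{c_{j,k}}{(z-z_j)^k},\qquad c_{j,r_j}\neq 0,
\]
where $r_j$ is the order of the pole. Set $S(z)=G(z)-\sum_{j=1}^m P_j(z)$. Then $S$ has only removable singularities in the closed disc $|z|\le R$, so it is analytic there. Being analytic at every point of a compact set, it is analytic on some open disc $|z|<R+2\varepsilon$ with $\varepsilon>0$.

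Second, apply Cauchy's coefficient formula to $S$ on the circle $|z|=R+\varepsilon$. Its Taylor coefficients $s_n$ satisfy
\[
|s_n|\le \Big(\max_{|z|=R+\varepsilon}|S|\Big)(R+\varepsilon)^{-n}=O(R^{-n}).
\]

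Third, expand each principal part around the origin. Since $z_j\neq0$,
\[
\frac{1}{(z-z_j)^k}=\frac{(-1)^k}{z_j^{k}}\Big(1-\frac{z}{z_j}\Big)^{-k}=\frac{(-1)^k}{z_j^{k}}\sum_{n\ge0}\binom{n+k-1}{k-1}z_j^{-n}z^n .
\]
Hence the $n$-th coefficient of $P_j$ is $\Pi_j(n)z_j^{-n}$, where
\[
\Pi_j(x)=\sum_{k=1}^{r_j}c_{j,k}(-1)^k z_j^{-k}\binom{x+k-1}{k-1}
\]
is a polynomial in $x$. The top term $k=r_j$ contributes degree exactly $r_j-1$ with nonzero leading coefficient $c_{j,r_j}(-1)^{r_j}z_j^{-r_j}/(r_j-1)!$, and all other terms have lower degree. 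So $\deg\Pi_j=r_j-1$. Since the Taylor coefficients of $G$ are $g_n=s_n+\sum_j\Pi_j(n)z_j^{-n}$, the claimed expansion follows.

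I expect no step to be a genuine obstacle. The only point needing care is the first one: justifying that analyticity on the closed disc yields analyticity on a slightly larger open disc. This is exactly what allows the Cauchy bound to be taken on a radius larger than $R$, and it needs only a finite-cover argument on the compact circle.
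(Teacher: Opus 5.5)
Your proof is correct and complete. After you subtract the principal parts, the remainder $S$ is analytic on a neighbourhood of $|z|\le R$, so Cauchy's estimate gives $O(R^{-n})$. The binomial series $(1-z/z_j)^{-k}=\sum_n\binom{n+k-1}{k-1}(z/z_j)^n$ then produces $\Pi_j$ of exact degree $r_j-1$, with leading coefficient $c_{j,r_j}(-1)^{r_j}z_j^{-r_j}/(r_j-1)!\neq0$.

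The paper does not prove the theorem itself; it cites \cite{FS09}. Its only justification is Remark~\ref{R:expansion_C}, which takes the contour-integration route instead. It applies the residue theorem to $\widetilde F(z)z^{-n-1}$ on $|z|=R$ and writes $f_n$ as minus the residue at $z_0$ plus a contour integral bounded by $\max_{|z|=R}|\widetilde F|\,R^{-n}$. It then computes the residue at the single simple pole explicitly.

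That route gives an error term controlled directly by the function on the circle and an immediate closed form $\psi_1=1/(z_0\widetilde Q'(z_0))$. The paper relies on this closed form to get uniformity in $\alpha$ over compact sets. Its drawback in general is that for a pole of order $r_j$ you would have to extract the polynomial structure and its degree from a Leibniz-rule computation of the residue of $G(z)z^{-n-1}$. Your subtraction of principal parts makes the polynomial and its degree transparent instead. The two agree: for a simple pole your formula gives $\Pi=-c_1/z_0$ with $c_1=\mathrm{Res}(\widetilde F,z_0)=-1/\widetilde Q'(z_0)$, which is exactly the paper's $\psi_1$.

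One small correction to your closing comment: enlarging the radius to $R+\varepsilon$ is not what makes the argument work. Since $S$ is analytic on a neighbourhood of the closed disc, Cauchy's estimate on $|z|=R$ already gives $|s_n|\le\max_{|z|=R}|S|\,R^{-n}$. The enlargement only sharpens the error to $O((R+\varepsilon)^{-n})$.
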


\begin{proof}[Proof of Proposition~\ref{P:xi_order}]
    By Lemma~\ref{L:num_zeros}, the function $\widetilde{Q}(z)$ has only a single zero at $z_0$. This zero is simple and is given by the solution to \eqref{eq:sol_z} (so that $\lambda$ will be equal to $1/z_0$). Consider an open ball centered at the origin with radius $R$ that is strictly between $z_0$ and $1/\tau^2$ (e.g.\ the midpoint). Note that $\widetilde{F}(z)$ has only a single pole (of order $1$) at $z_0$, $\widetilde{F}(z)$ is analytic on $|z| = R$, and it has a Taylor expansion
\[
\widetilde{F}(z) = \sum_{n \geq 0} f_nz^n=\sum_{n \geq 0} Q(\xi_1 = n) z^n
\]
at the origin. Applying Theorem~\ref{T:expand_mero} to $\widetilde{F}(z)$ with coefficients $f_n$, gives that there exists a degree $0$ polynomial (i.e.\ a constant) $\psi_1 = \psi_1(\alpha, \kappa)$ depending on $\widetilde{F}(z)$ (and thus on $\alpha$ and $\kappa$) with 
\[
Q(\xi_1 = n) = \psi_1\Big( \frac{1}{z_0} \Big)^{n} + O_{\alpha,\kappa}(R^{-n}) = \psi_1 \Big( \frac{1}{z_0} \Big)^{n}\,(1+o_{\alpha,\kappa}(1)). \qedhere
\]
\end{proof}

\begin{remark} \label{R:expansion_C}
We briefly explain Theorem~\ref{T:expand_mero} in the present setting, which also identifies the function $\psi_1$. Let $\gamma$ denote the positively oriented boundary of $B_R$, where $z_0 < R < 1/\tau^2$. By the residue theorem,
\[
\frac{1}{2i \pi } \int_{\gamma} \frac{\widetilde{F}(z)}{z^{n+1}} dz = \operatorname{Res}\Big( \frac{\widetilde{F}(z)}{z^{n+1}}, 0 \Big) + \operatorname{Res}\Big( \frac{\widetilde{F}(z)}{z^{n+1}}, z_0\Big),
\]
where $\operatorname{Res}(f,a)$ denotes the residue of $f$ at $a$. Rearranging and applying Cauchy's integral formula yields 
\[
f_n = \operatorname{Res}\Big( \frac{\widetilde{F}(z)}{z^{n+1}}, 0 \Big) =  -\operatorname{Res}\Big( \frac{\widetilde{F}(z)}{z^{n+1}}, z_0\Big) + \frac{1}{2i \pi } \int_{\gamma} \frac{\widetilde{F}(z)}{z^{n+1}} dz .
\]
Since $\widetilde{F}(z)$ is analytic on $\gamma$, the integral is $O(R^{-n}) = o(z_0^{-n})$. On the other hand, since 
\[
\widetilde{Q}(z) = \widetilde{Q}'(z_0) (z-z_0) + o(z-z_0), \quad \text{as } z \to z_0,
\]
and $z_0$ is a simple zero, we obtain
\[
\operatorname{Res}\Big( \frac{\widetilde{F}(z)}{z^{n+1}}, z_0\Big) = \lim\limits_{z \to z_0} (z-z_0) \frac{\widetilde{F}(z)}{z^{n+1}} =  - \frac{1}{z_0\widetilde{Q}'(z_0)} z_0^{-n}.
\]
Combining the above identities yields the following expression for the function $\psi_1$ in Proposition~\ref{P:xi_order}, 
\[
\psi_1 = \frac{1}{z_0 \widetilde{Q}'(z_0)} > 0.
\]
If $\alpha$ is bounded away from $0$ and $\infty$, then $z_0$ stays bounded away from $1$ and $1/\tau^2$, and consequently $\widetilde{Q}'(z_0)$ remains bounded. In particular, if $\alpha$ is restricted to some compact set $K \subset (0,\infty)$, the function $\psi_1(\alpha,\kappa)$ is bounded away from $0$ on $K$. Moreover, the difference $R-z_0$ can be uniformly bounded from below, so that the $o_{\alpha,\kappa}(1)$ term in Proposition~\ref{P:xi_order} can also be made uniform in $\alpha$ for compact subsets of $(0,\infty)$.
\end{remark}

\subsection{Proof of Lemma~\ref{L:num_zeros}}
We now prove Lemma~\ref{L:num_zeros}. The argument has two parts: we first show that $\widetilde{Q}(z)$ has a unique simple zero on the real interval $(1,1/\tau^2)$, and then we show that there are no other zeros in $D$.
Notice that for real $z$ with $z \in (1,1/\tau^2)$ we have
\begin{align*}
    \lim\limits_{z \downarrow 1} \widetilde{Q}(z) &= - \infty \\
    \lim\limits_{z \uparrow 1/\tau^2} \widetilde{Q}(z) &= + \infty \\
    \widetilde{Q}'(z) &= \frac{q^*}{(1-z)^2} +H'(z) > 0,
\end{align*}
where the last inequality follows from the fact that the coefficients of $H(z)$ are strictly positive. By the intermediate value theorem, there exists a unique $z_0 \in (1, 1/\tau^2)$ such that $\widetilde{Q}(z_0) = 0$. Since $\widetilde{Q}'(z_0) > 0$, this must be a simple (i.e.\ of order one) zero. Rearranging the equation $\widetilde{Q}(z_0)=0$ yields \eqref{eq:sol_z}.

To exclude any remaining zeros, it is convenient to rewrite $\widetilde{Q}(z)$ as a sum of geometric series. We claim that for $z \in D$ we have
    \begin{equation} \label{eq:new_Q}
        \widetilde{Q}(z) = q^* \sum_{k \geq 0} \binom{\alpha +k -1}{k}\frac{\tau^{2k}}{1 - z \tau^{2k}},
    \end{equation}
    where for $\beta \in \C$ we define $\binom{\beta}{k} = \beta(\beta -1) \ldots (\beta-k+1)/k!$ with $\binom{\beta}{0} = 1$. Since $|\tau^{2(n+1)}|<1$, the negative binomial series (or equivalently, an infinite Taylor series expansion) gives
    \begin{equation}
        q_n = q^* (1 - \tau^{2(n+1)})^{-\alpha} = q^* \sum_{k \geq 0} \binom{\alpha + k - 1}{k} \tau^{2k(n+1)}. \label{eq:expand_1-tau}
    \end{equation}
    Substituting this into the definition of $\widetilde{Q}(z)$, we obtain
    \begin{align*}
        \widetilde{Q}(z) &= \frac{q^*}{1-z} + q^* \sum_{n\geq 0} \Big(\big( \sum_{k \geq 0} \binom{\alpha + k - 1}{k} \tau^{2k(n+1)}\big) -1\Big) z^n \\
        &= \frac{q^*}{1-z} + q^* \sum_{n\geq 0} \sum_{k \geq 1} \binom{\alpha + k - 1}{k} \tau^{2k(n+1)}   z^n.
     \end{align*}
     Interchanging the order of summation gives, for $z \in D$ (so that $z \neq 1$ and $|\tau^{2k} z| < 1$ for $k \geq 1$), that
     \begin{align*}
         \widetilde{Q}(z) &=  \frac{q^*}{1-z} + q^* \sum_{k \geq 1} \binom{\alpha + k - 1}{k} \sum_{n \geq 0}  \tau^{2k(n+1)}   z^n \\
          &=  \frac{q^*}{1-z} + q^* \sum_{k \geq 1} \binom{\alpha + k - 1}{ k} \frac{\tau^{2k}}{1 - \tau^{2k} z},
     \end{align*}
     as required (the $k=0$ term is simply $q^*/(1-z)$). Note that interchanging the order of summation is justified since for $z \in B_{1/\tau^2}$, we have $|z|\tau^{2k} \le |z|\tau^2 < 1$ for all $k\ge 1$ and
    \begin{align*}
        \sum_{k \geq 1} \binom{\alpha + k - 1}{k}
        \sum_{n\geq 0} \tau^{2k(n+1)} |z|^n
        &= \sum_{k \geq 1} \binom{\alpha + k - 1}{k}
           \frac{\tau^{2k}}{1-|z|\tau^{2k}} \\
        &\le
           \Bigg(\sup_{k\ge 1}\frac{1}{1-|z|\tau^{2k}}\Bigg)
           \sum_{k \geq 1} \binom{\alpha + k - 1}{k}\tau^{2k} \\
        &=
           \frac{1}{1-|z|\tau^2}\Big((1-\tau^2)^{-\alpha}-1\Big),
    \end{align*}
    which is finite. This proves \eqref{eq:new_Q}.

     We split the rest of $D \setminus (1,1/\tau^{2})$ into 3 regions:
     \begin{align*}
         R_1 &=\{ z \in D: \Re(z) \in [0,1), \Im(z) = 0 \}, \\
         R_2 &=\{ z \in D : \Re(z) \in (-1/\tau^2, 0), \Im(z) = 0 \},  \\
         R_3 &=  \{ z \in D: \Im(z) \neq 0 \},
     \end{align*}
     where $\Re(z)$ and $\Im(z)$ denote the real and imaginary parts of $z$, respectively. Since $\widetilde{Q}(z) > 0$ for $z\in [0,1)$, there are no zeros in $R_1$. For $z$ in the second region $R_2$, it holds that $1 - z\tau^{2k} \geq 1$, so that each summand in \eqref{eq:new_Q} is strictly positive whenever $\alpha > 0$. For the last region $R_3$, let $z = x + iy$ with $y \neq 0$. Then the quantity
\[
\Im(\frac{1}{1-z\tau^{2k}}) = \Im( \frac{1 - x \tau^{2k} + i y \tau^{2k}}{(1 - x \tau^{2k} - i y \tau^{2k})(1 - x \tau^{2k} + i y \tau^{2k})}) = \frac{y \tau^{2k}}{(1-x \tau^{2k})^2 + y^2 \tau^{4k}},
\]
is not equal to zero and has the same sign as $y$ for every $k$. As the series in \eqref{eq:new_Q} is absolutely convergent, it follows that
\[
\Im(\widetilde{Q}(z)) = q^* \sum_{k \geq 0} \binom{\alpha + k - 1}{ k} \Im(\frac{\tau^{2k}}{1 - z \tau^{2k}}) \neq 0,
\]
so that $\widetilde{Q}(z) \neq 0$, and Lemma~\ref{L:num_zeros} is proven.\qed

\section{Proof of Theorem~\ref{thm:sharpness}}\label{sec:sharpness-proof}

In this section, we prove that $\alpha_\#(\kappa)=\alpha_c(\kappa)$ for all $\kappa\geq \kappa_c$. Since the inequality $\alpha_\#(\kappa)\leq \alpha_c(\kappa)$ is obvious (finiteness of the expected cluster size implies almost sure finiteness of the cluster), it suffices to show that $\alpha_\#(\kappa)\geq \alpha_c(\kappa)$ for $\kappa\geq \kappa_c$. We will show this by a second moment argument. The key technical ingredient is Lemma~\ref{L:3_cluster}, which gives an upper bound on the three-point function.

\smallskip

Fix $\alpha > \alpha_\#(\kappa)$, and denote by $N_r$ the random set of vertices in $\partial B(r)$ that are loop-connected to $0$. By the Paley–Zygmund inequality, 
\[
 \P_{\alpha,\kappa} \big( 0 \leftrightarrow \partial B(r) \big) = \P(|N_r| > 0) \geq \frac{\E_{\alpha,\kappa} [ |N_r| ]^2}{\E_{\alpha,\kappa} [ |N_r|^2]},
\]
where by Theorem~\ref{T:two_point_order},
\begin{equation}
\E_{\alpha,\kappa} [ |N_r| ] = \sum_{v\in\partial B(r)}\P_{\alpha,\kappa}(0\leftrightarrow v) \geq |\partial B(r)|\,\frac12\psi\lambda^r = \frac12 d(d-1)^{r-1}\,\psi\lambda^r, \label{eq:N_r_first_moment}
\end{equation}
for all $r$ large enough.
Thus, if 
\begin{equation}\label{eq:second-moment-upper}
\E_{\alpha,\kappa}[|N_r|^2]\leq C (d-1)^{2r}\lambda^{2r},
\end{equation}
for some $C=C(\alpha,\kappa)$, then 
\[
\theta(\alpha,\kappa) = \lim_{r\to\infty}\P_{\alpha,\kappa}\big( 0 \leftrightarrow \partial B(r) \big)>0,
\]
that is $\alpha\geq \alpha_c(\kappa)$. Since $\alpha>\alpha_\#(\kappa)$ is arbitrary, we obtain that $\alpha_\#(\kappa)\geq \alpha_c(\kappa)$. 

\smallskip

It remains to prove \eqref{eq:second-moment-upper}. We have 
\[
\E_{\alpha,\kappa} \big[|N_r|^2 \big] = \sum_{u,v \in \partial B(r)} \P_{\alpha,\kappa} \big( u,v \in \cC(0) \big).
\]
The next key lemma gives an upper bound on the probability in the right hand side.
\begin{lemma} \label{L:3_cluster}
Let $\alpha \geq 0$ and $\kappa \geq \kappa_c$. 
There exists a constant $D = D(\alpha, \kappa)$ such that for any $u$ and $v$, 
\[
\P_{\alpha,\kappa} \big( u,v \in \cC(0) \big) \leq D\,\lambda^{\frac12(d(u,v) + d(0,u) + d(0,v))}.
\]
\end{lemma}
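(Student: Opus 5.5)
Throughout, let $w$ be the branching point (median) of $0,u,v$ in $\mathbb T_d$, and set $a_0=d(w,0)$, $a_1=d(w,u)$, $a_2=d(w,v)$. On a tree
\[
d(u,v)+d(0,u)+d(0,v)=2(a_0+a_1+a_2),
\]
so the claim is equivalent to $\P_{\alpha,\kappa}(u,v\in\cC(0))\le D\lambda^{a_0+a_1+a_2}$.

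The plan is to decompose the event according to the loops that visit $w$. Since $\cC(0)$ contains $u$ and $v$, every edge on the three geodesic arms $[w,0],[w,u],[w,v]$ must be traversed by an open loop. In particular, for each non-degenerate arm ($a_i\ge1$), the first edge of that arm at $w$ is traversed by an open loop through $w$.

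For $i=0,1,2$ define the \emph{extent} $b_i\in\{0,\dots,a_i\}$ as the largest $k$ such that the vertex at distance $k$ from $w$ on the $i$-th arm is visited by some open loop containing $w$. Then $b_i\ge1$ whenever $a_i\ge1$. Let $y_i$ be the vertex on arm $i$ at distance $b_i$ from $w$, and let $z_0=0$, $z_1=u$, $z_2=v$ be the endpoints of the arms.

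Given the extents, the remaining connection $y_i\leftrightarrow z_i$ must be realised by loops not containing $w$. Any such loop lies in a single component of $\mathbb T_d\setminus\{w\}$, so these loops split into three disjoint families, one for each branch. Hence
\[
\{u,v\in\cC(0)\}\subset\bigcup_{(b_0,b_1,b_2)}\Big(E_{b}\cap\bigcap_{i}\{y_i\leftrightarrow z_i\text{ via loops avoiding }w\}\Big),
\]
where $E_b$ is the event that open loops through $w$ reach $y_0,y_1,y_2$. The three connection events and $E_b$ are measurable with respect to disjoint sets of loops, so by the Poisson independence property the probability of each term factorises. Each connection event is increasing and dominated by $\P_{\alpha,\kappa}(y_i\leftrightarrow z_i)$. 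By Theorem~\ref{T:two_point_order}, $\P_{\alpha,\kappa}(x\leftrightarrow y)\le C\lambda^{d(x,y)}$ uniformly in $x,y$, which gives the bound $C\lambda^{a_i-b_i}$ for arm $i$.

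For $E_b$, each $y_i$ is joined to $w$ by an open loop; the loops may coincide. Lemma~\ref{L:loop_3_point}, applied with $w$ as the common ancestor, gives
\[
\P(E_b)\le C_\alpha\tau_\kappa^{2(b_0+b_1+b_2)}.
\]
If only one or two arms are non-degenerate, or $w\in\{0,u,v\}$, Lemma~\ref{L:single_loop} and the Slivnyak--Mecke argument from the proof of Lemma~\ref{L:loop_3_point} give the analogous bound with fewer factors.

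Summing over the extents yields
\[
\P_{\alpha,\kappa}(u,v\in\cC(0))\le C'\lambda^{a_0+a_1+a_2}\prod_{i}\sum_{b_i\ge 0}\Big(\frac{\tau_\kappa^2}{\lambda}\Big)^{b_i}.
\]
By Remark~\ref{R:lambda>tau^2}, $\tau_\kappa^2<\lambda$, so each geometric series converges and the bound follows with $D=D(\alpha,\kappa)$. For $\alpha=0$ the claim is trivial, since then $\cC(0)=\{0\}$.

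The main obstacle is making the decomposition rigorous. One must check that the connection from $y_i$ to $z_i$ can indeed be realised without loops through $w$. Any open loop through $w$ that reaches beyond $y_i$ on arm $i$ would contradict the maximality of $b_i$. A loop through $w$ that leaves arm $i$ sideways before $y_i$ and re-enters further out is impossible on a tree. So the chain of loops from $y_i$ to $z_i$ can be pruned to loops avoiding $w$. The union over $b$ must be taken with $b$ the exact extents, which makes the decomposition a genuine partition, while the factorised bounds use only the monotone events. This is where I would be most careful.
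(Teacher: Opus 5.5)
Your proposal is correct and follows essentially the same route as the paper: decompose at the branching point of $0,u,v$ according to how far open loops through it reach along each arm, use Poisson independence between loops through that vertex and loops avoiding it, bound the single-loop part by Lemma~\ref{L:loop_3_point} and the residual arms by Theorem~\ref{T:two_point_order}, and sum geometric series with ratio $\tau_\kappa^2/\lambda<1$. Your explicit handling of the degenerate arms and your argument that every edge beyond $y_i$ is traversed only by loops avoiding $w$ make precise points the paper leaves implicit.
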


Before proving Lemma~\ref{L:3_cluster}, we show how it implies \eqref{eq:second-moment-upper}. By Lemma~\ref{L:3_cluster},
\[
\E_{\alpha,\kappa} \big[|N_r|^2 \big] \leq D \sum_{u,v \in \partial B(r)} \lambda^{\frac12(d(u,v) + d(0,u) + d(0,v))} 
= D \lambda^{r} \sum_{u,v \in \partial B(r)} \lambda^{\frac12 d(u,v)}.
\]
Fix $u \in \partial B(r)$ and $k \in \{0,1,\ldots,r\}$. Let $v \in \partial B(r)$ be such that the common ancestor of $u$ and $v$ (with respect to $0$) lies at distance $k$ from $0$. Then $d(u,v) = 2(r-k)$. If $k=0$, there are $(d-1)^r$ such vertices $v$, while if $k=r$, there is exactly one. For $1 \leq k \leq r-1$, there are $(d-2)(d-1)^{r-1-k}\leq (d-1)^{r-k}$ such vertices. Summing over all $u \in \partial B(r)$ and using symmetry, we obtain
\begin{align}
\sum_{u,v \in \partial B(r)} \lambda^{\frac12 d(u,v)}
&= |\partial B(r)| \sum_{k=0}^r \sum_{\substack{v \in \partial B(r): \\ d(u,v) = 2(r-k)}} \lambda^{r-k}
\leq |\partial B(r)| \sum_{k=0}^r (d-1)^{r-k}\lambda^{r-k} \nonumber \\
&\leq |\partial B(r)| (d-1)^r\lambda^r \sum_{k=0}^\infty \big((d-1)\lambda\big)^{-k}. \label{eq:double_boundary_sum}
\end{align}
Since $\alpha>\alpha_\#(\kappa)$, by \eqref{def:alpha-sharp-new}, we have $\lambda(d-1) > 1$, hence the infinite sum above is bounded. All in all, as $|\partial B(r)| = d(d-1)^{r-1}$, the inequality \eqref{eq:second-moment-upper} follows.\qed 

\smallskip

It remains to prove Lemma~\ref{L:3_cluster}. 
\begin{proof}[Proof of Lemma~\ref{L:3_cluster}]
The statement is trivial for $\alpha=0$, so we assume $\alpha>0$. 

Let $a$ be the common ancestor of $u$ and $v$. 
By considering the maximal single loops outgoing from $a$ in the directions of $0$, $u$ and $v$ (see also Figure~\ref{fig:3-point}), we obtain that if $u,v\in\cC(0)$, then there exist vertices $a'$, $u'$ and $v'$ on the geodesic paths between $a$ and $0$, resp.\ $a$ and $u$, resp.\ $a$ and $v$, and loops $\ell_0,\ell_u,\ell_v\in\mathcal L_{\alpha,\kappa}$ such that the events 
\begin{align*}
&\{\exists \ell_0\in \mathcal L_{\alpha,\kappa}\,:\,a\xleftrightarrow{\ell_0}a'\}\cap \{a'\xleftrightarrow{\mathbb T_d\setminus \{a\}} 0\},\\
&\{\exists \ell_u\in \mathcal L_{\alpha,\kappa}\,:\,a\xleftrightarrow{\ell_u}u'\}\cap \{u'\xleftrightarrow{\mathbb T_d\setminus \{a\}} u\},\\
&\{\exists \ell_v\in \mathcal L_{\alpha,\kappa}\,:\,a\xleftrightarrow{\ell_v}v'\}\cap \{v'\xleftrightarrow{\mathbb T_d\setminus \{a\}} v\}
\end{align*}
occur simultaneously; see Figure~\ref{fig:3-point}.
\begin{figure}[t]
    \centering
    \includegraphics[width=0.5\textwidth]{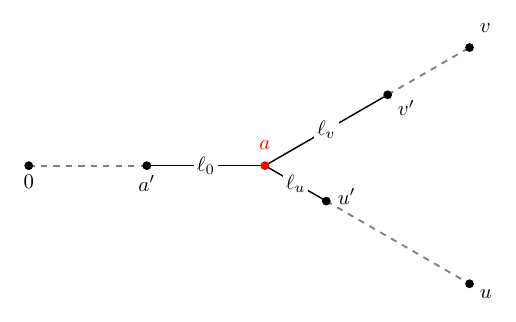}
    \caption{If $u,v \in \cC(0)$, then there are loops $\ell_0,\ell_u,\ell_v\in\mathcal L_{\alpha,\kappa}$ connecting the common ancestor $a$ of $u$ and $v$ to points $a',u',v'$ on the geodesic paths from $a$ to $0,u,v$, respectively. Moreover, the points $a',u',v'$ are connected, respectively, to $0,u,v$ via (disjoint sets of) loops that do not visit the vertex $a$.}
    \label{fig:3-point}
\end{figure}
Notice that the loops involved in the single-loop connections all contain vertex $a$ and the loops involved in the connections to $0$, $u$ and $v$ outside of $a$ are contained in different subtrees of $\mathbb T_d\setminus\{a\}$. Using the independence of the restrictions of the Poisson process to disjoint sets, the respective four events are independent. Hence 
\begin{multline*}
\P_{\alpha,\kappa}\big(u,v\in\cC(0)\big)\leq 
\sum_{a',u',v'} 
\P_{\alpha,\kappa}\big(
\exists \ell_0,\ell_u,\ell_v\in \mathcal L_{\alpha,\kappa}\,:\, a\xleftrightarrow{\ell_0}a', a\xleftrightarrow{\ell_u}u', a\xleftrightarrow{\ell_v}v'\big) \\
\P_{\alpha,\kappa}(a'\xleftrightarrow{\mathbb T_d\setminus \{a\}} 0)
\P_{\alpha,\kappa}(u'\xleftrightarrow{\mathbb T_d\setminus \{a\}} u)
\P_{\alpha,\kappa}(v'\xleftrightarrow{\mathbb T_d\setminus \{a\}} v).
\end{multline*}
By Lemma~\ref{L:loop_3_point},
\begin{align}
\P_{\alpha,\kappa}\big(
\exists \ell_0,\ell_u,\ell_v\in \mathcal L_{\alpha,\kappa}\,:\, a\xleftrightarrow{\ell_0}a', a\xleftrightarrow{\ell_u}u', a\xleftrightarrow{\ell_v}v'\big) 
&\leq 16(\alpha + \alpha^2 + \alpha^3)\tau_\kappa^{d(a',u')+d(u',v')+d(v',a')} \nonumber \\
&= 16(\alpha + \alpha^2 + \alpha^3)\tau_\kappa^{2(d(a,a')+d(a,u')+d(a,v'))} \label{eq:3_point_single_3_cluster},
\end{align}
and, by Theorem~\ref{T:two_point_order},  there exists $C=C(\alpha,\kappa)$ such that 
\begin{multline} \label{eq:3_different_loops}
\P_{\alpha,\kappa}(a'\xleftrightarrow{\mathbb T_d\setminus \{a\}} 0)
\P_{\alpha,\kappa}(u'\xleftrightarrow{\mathbb T_d\setminus \{a\}} u)
\P_{\alpha,\kappa}(v'\xleftrightarrow{\mathbb T_d\setminus \{a\}} v)  \\
\begin{aligned}[t]
&\leq \P_{\alpha,\kappa}(a'\leftrightarrow 0)
\P_{\alpha,\kappa}(u'\leftrightarrow u)
\P_{\alpha,\kappa}(v'\leftrightarrow v)
\leq C \psi^3\lambda^{d(a',0)+d(u',u)+d(v',v)}\\
&= C  \psi^3 \lambda^{\frac12(d(u,v) + d(0,u) + d(0,v))}\,
\lambda^{-(d(a,a')+d(a,u')+d(a,v'))}. 
\end{aligned} 
\end{multline}
Hence it suffices to prove that the sum 
\[
\sum_{a',u',v'} \big(\tau_\kappa^2\lambda^{-1}\big)^{d(a,a')+d(a,u')+d(a,v')}
\]
is finite. Let $\gamma = \tau_\kappa^2\lambda^{-1}$ and recall that $\gamma<1$ (see e.g.\ Remark~\ref{R:lambda>tau^2}). Thus, the above sum is bounded from above by 
\begin{equation}
    \sum_{k,l,m\geq 0} \gamma^{k+l+m} = \Big(\frac{1}{1-\gamma}\Big)^3<\infty. \label{eq:gamma_cubed}
\end{equation}

The proof of Lemma~\ref{L:3_cluster} is completed. 
\end{proof}

\begin{remark}
The bound in Lemma~\ref{L:3_cluster} is actually sharp. Indeed, let $u$ and $v$ be two vertices and denote by $a$ their common ancestor. 
By the FKG inequality and Theorem~\ref{T:two_point_order}, there exists $c=c(\alpha,\kappa)>0$ such that 
\[
\P_{\alpha,\kappa} ( u,v \in \cC(0) )\geq \P_{\alpha,\kappa}(u \leftrightarrow v) \P_{\alpha,\kappa}(0 \leftrightarrow a)
\geq c\,\lambda^{d(u,v) + d(0,a)} = c\,\lambda^{\frac12(d(u,v) + d(0,u) + d(0,v))}.
\]
In particular, 
$\P_{\alpha,\kappa} \big( u,v \in \cC(0) \big) \leq \frac{D}{c}\,\P_{\alpha,\kappa}(u \leftrightarrow v) \P_{\alpha,\kappa}(0 \leftrightarrow a)$.
\end{remark}

\smallskip

\section{Proofs of Theorems~\ref{thm:alpha-c-kappa}, \ref{thm:regularity} and \ref{T:no_percolation}}\label{sec:proofs}

\subsection{Proof of Theorem~\ref{thm:alpha-c-kappa}}

Let $\kappa>\kappa_c$. By Theorem~\ref{T:two_point_order} and \eqref{def:alpha-sharp-new}, $\alpha_\#(\kappa)$ is a unique positive solution to \eqref{eq:alpha-c-kappa}. By Theorem~\ref{thm:sharpness}, $\alpha_c(\kappa)=\alpha_\#(\kappa)$. 

\smallskip

It remains to show that $\alpha_c(\kappa)=0$ for all $\kappa\leq \kappa_c$. Since $\alpha_c(\kappa)$ is non-decreasing in $\kappa$, it suffices to prove that $\alpha_c(\kappa_c)=0$. 
By Theorem~\ref{thm:sharpness}, it suffices to prove that $\alpha_\#(\kappa_c)=0$.

Let $\alpha>0$. We need to show that $\E_{\alpha,\kappa_c}[|\cC(0)|]=\infty$. 
The expected size of the open cluster of the origin is at least as big as the expected number of vertices reachable by a single loop from $0$. Thus, by Lemma~\ref{L:single_loop}, 
\begin{align*}
\E_{\alpha, \kappa_c} \big[|\cC(0)| \big] &\geq 
\sum_v\P(\exists\,\ell\in\mathcal L_{\alpha,\kappa_c}\,:\,0\xleftrightarrow{\ell}v)
=
\sum_v\big(1 - (1 - \tau_{\kappa_c}^{2d(0,v)})^\alpha \big)\\
&=
1 + \sum_{r=1}^\infty d(d-1)^{r-1} 
\big(1 - (1 - \tau_{\kappa_c}^{2r})^\alpha \big).
\end{align*}
Since $\tau_{\kappa_c} = \frac{1}{\sqrt{d-1}}$, the latter series diverges. 
\qed

\subsection{Proof of Theorem~\ref{thm:regularity}}
Consider the function
\begin{equation} \label{def:F}
    F(\alpha, \kappa) =  \sum_{n \geq 0} \Big( (1-\tau_\kappa^{2(n+1)})^{-\alpha} -1 \Big) (d-1)^n  - \frac{1}{d-2},
\end{equation}
cf.\ \eqref{eq:alpha-c-kappa}.
One may verify that $F(\alpha,\kappa)$ is differentiable for $\alpha > 0$ and $\kappa > \kappa_c$, and 
\[
\frac{\partial}{\partial \alpha} F(\alpha, \kappa) > 0, \quad
\frac{\partial}{\partial \kappa} F(\alpha, \kappa) < 0.
\]
By the implicit function theorem, there exists a unique differentiable function $\varphi(\kappa)$ such that $F(\varphi(\kappa), \kappa) = 0$. By Theorem~\ref{thm:alpha-c-kappa}, $\alpha_c(\kappa)=\varphi(\kappa)$. 
Thus, by the implicit function theorem, $\alpha_c'(\kappa) > 0$ for $\kappa > \kappa_c$. 
The continuity of $\alpha_c(\kappa)$ at $\kappa=\kappa_c$ follows from the fact that $\alpha_c(\kappa_c)=0$ and \eqref{eq:asymptotic-kappac}, which we prove next. 

\smallskip

The proofs of the asymptotics \eqref{eq:asymptotic-kappac} and \eqref{eq:asymptotic-infty} are based on Taylor expansions of the sum in \eqref{def:F} and elementary algebraic manipulations, so we leave many of the technical details to the interested reader. 

We begin with the proof of \eqref{eq:asymptotic-kappac}. 
Fix $d \geq 3$ and write $\kappa = \kappa_c + \varepsilon$ for $\varepsilon>0$. By \eqref{def:tau} and the fact that $\tau_{\kappa_c} = \frac{1}{\sqrt{d-1}}$, 
\begin{align*}
\tau_\kappa^2 (d-1) &= 1 + \frac{2d}{\sqrt{d-1}}\varepsilon + \frac{d^2}{2(d-1)}\varepsilon^2 - \frac{(2\sqrt{d-1} + d\varepsilon)\sqrt{4d\sqrt{d-1}\,\varepsilon + d^2\varepsilon^2}}{2(d-1)}\\
&= 1 - 2\sqrt{d}(d-1)^{-1/4} \sqrt{\varepsilon} + O_d(\varepsilon),\,\,\text{as }\varepsilon\to 0.
\end{align*}
Thus, 
\begin{equation}\label{eq:expansion-tau}
\frac{1}{\tau_\kappa^2} = (d-1)\frac{1}{1-2\sqrt{d}(d-1)^{-1/4} \sqrt{\varepsilon} + O(\varepsilon)} = (d-1) + O_d(\sqrt{\varepsilon}),\,\,\text{as }\varepsilon\to 0.
\end{equation}

By \eqref{eq:alpha-c-kappa} and the inequality $(1-x)^{-\alpha}-1\geq \alpha x$, 
\[
\frac{1}{d-2}\geq \alpha_c(\kappa) \sum_{n \geq 0}\tau_\kappa^{2(n+1)} (d-1)^n = 
\alpha_c(\kappa) \frac{\tau_\kappa^2}{1-\tau_\kappa^2(d-1)}.
\]
Hence
\begin{equation}
\alpha_c(\kappa) \leq \frac{1 - \tau_\kappa^2(d-1)}{\tau_\kappa^2(d-2)} = O_d(\sqrt{\varepsilon}),\,\,\text{as }\varepsilon\to 0. \label{eq:alpha_close_kappa_c_upper}
\end{equation}
Similarly, using \eqref{eq:alpha-c-kappa} and the inequality $(1-x)^{-\alpha} -1 \leq \alpha x + 2 \alpha x^2$, valid for $x \in [0,1/2]$ and $\alpha \in [0,1]$ (which is applicable since $\alpha_c(\kappa) \to 0$ as $\varepsilon\to0$), we have 
\[
\frac{1}{d-2}\leq 
\alpha_c(\kappa) \frac{\tau_\kappa^2}{1-\tau_\kappa^2(d-1)}
+ 2\alpha_c(\kappa) \frac{\tau_\kappa^4}{1-\tau_\kappa^4(d-1)}
= \alpha_c(\kappa) \frac{\tau_\kappa^2}{1-\tau_\kappa^2(d-1)} + O_d(\sqrt{\varepsilon}).
\]
Combining this with \eqref{eq:alpha_close_kappa_c_upper} 
and using the expansion \eqref{eq:expansion-tau} yields
\[
\alpha_c(\kappa) = \frac{1 - \tau_\kappa^2(d-1)}{\tau_\kappa^2(d-2)} 
+O_d\big( \sqrt{\varepsilon} (1 - \tau_\kappa^2 (d-1)) \big)
= 
\frac{2\sqrt{d}(d-1)^{3/4}}{d-2} \sqrt{\varepsilon} + O_d(\varepsilon),\,\,\text{as }\varepsilon\to 0, 
\]
which is precisely \eqref{eq:asymptotic-kappac}. 

\smallskip

It remains to prove \eqref{eq:asymptotic-infty}. Recall that $\tau_\kappa\to0$ as $\kappa\to\infty$. Let 
\[
R_1( \alpha, \kappa) =  \sum_{n \geq 1} \Big( (1-\tau^{2(n+1)}_\kappa)^{-\alpha} -1 \Big) (d-1)^n > 0.
\]
By writing $F(\alpha,\kappa) =  (1-\tau_\kappa^2)^{-\alpha} - \frac{d-1}{d-2} + R_1(\alpha,\kappa)$ in \eqref{def:F}, we obtain the bound 
\[
(1-\tau_\kappa^2)^{-\alpha_c(\kappa)} = \frac{d-1}{d-2} - R_1(\alpha_c(\kappa),\kappa) \leq \frac{d-1}{d-2}.
\]
Using further that $x \leq - \log(1-x)$ gives
\begin{equation} \label{eq:App-alpha_sharp_up}
\alpha_c(\kappa) \leq \frac{1}{\tau_\kappa^2} \log \big(\frac{d-1}{d-2} \big).
\end{equation}
However, since
\begin{equation} \label{eq:App_tau_kappa}
\frac{1}{\tau_\kappa^2} = d^2(1+\kappa)^2 - 2(d-1) -(d-1)^2 \tau_\kappa^2,
\end{equation}
we obtain
\[
\alpha_c(\kappa) \leq d^2  \log \big(\frac{d-1}{d-2} \big)(1+\kappa)^2 +O_d(1),\,\,\text{as }\kappa\to\infty.
\]
Furthermore, by \eqref{eq:App-alpha_sharp_up}, $(1-\tau_\kappa^{2(n+1)})^{-\alpha_c(\kappa)} -1 = O_d(\alpha_c(\kappa) \tau_\kappa^{2(n+1)})$ for $n\geq1$, as $\kappa\to\infty$, 
hence
\[
R_1( \alpha_c(\kappa), \kappa) = O_d(\alpha_c(\kappa) \tau_\kappa^4) = O_d(\tau_\kappa^2)
\]
leading to 
\[
\alpha_c(\kappa) \geq -\frac{\log \big(\frac{d-1}{d-2} \big) + O_d(\tau_\kappa^2)}{\log(1-\tau_\kappa^2)}.
\]
Finally, with $-\log(1-x) \leq \frac{x}{1-x}$ and \eqref{eq:App_tau_kappa}, one also obtains
\[
\alpha_c(\kappa) \geq d^2  \log \big(\frac{d-1}{d-2} \big)(1+\kappa)^2 +O_d(1),\,\,\text{as }\kappa\to\infty,
\]
which is precisely \eqref{eq:asymptotic-infty}. \qed


\smallskip

\subsection{Proof of Theorem~\ref{T:no_percolation}}
We use general results about insertion-tolerant bond percolation on non-amenable transitive unimodular graphs, see \cite[Chapters~7 and 8]{LP16} for the relevant background.

The bond percolation induced by the Poisson loop ensemble $\mathcal L_{\alpha,\kappa}$ is insertion-tolerant, since it stochastically dominates the Bernoulli bond percolation induced by the restriction of $\mathcal L_{\alpha,\kappa}$ to the loops of length $2$, see the proof of \cite[Proposition~3.3]{CS16} (and the footnote on \cite[p.\ 238]{LP16}). The tree $\mathbb T_d$ is a non-amenable transitive unimodular graph. 

\smallskip

Let $N$ be the number of infinite open clusters. Since the event $\{N=k\}$ is a tail event, \cite[Proposition~7.1]{CS16} implies that $N$ is almost surely constant in $\{0,1,2,\ldots\}\cup\{\infty\}$. Moreover, 
since the loop percolation is insertion-tolerant, by \cite[Theorem~7.8]{LP16}, $N \in \{0,1,\infty\}$ almost surely. It therefore remains to exclude the cases $N=1$ and $N=\infty$.

\smallskip

Assume that $N=1$ almost surely. By the FKG inequality, 
\begin{align*}
\P_{\alpha,\kappa}(0\leftrightarrow v) &\geq 
\P_{\alpha,\kappa}(|\cC(0)|=\infty, |\cC(v)|=\infty)\geq
\P_{\alpha,\kappa}(|\cC(0)|=\infty)
\P_{\alpha,\kappa}(|\cC(v)|=\infty)\\
&= \theta(\alpha,\kappa)^2>0.
\end{align*}
On the other hand, by Theorem~\ref{T:two_point_order}, $\P_{\alpha,\kappa}(0\leftrightarrow v)\to 0$ as $d(0,v)\to\infty$, which contradicts the uniformly positive lower bound above. Thus, for any $\alpha>0$ and $\kappa\geq \kappa_c$, 
\[
N\in\{0,\infty\}\,,\,\text{ almost surely}.
\]
(It follows from Remark~\ref{rem:loops-cover-T} that for all $\alpha>0$ and $\kappa<\kappa_c$, $N=1$ almost surely.)

\smallskip

It remains to rule out the case $N=\infty$ for $\alpha=\alpha_c(\kappa)$. 
Let $N_r$ be the set of vertices in $\partial B(r)$ loop-connected to $0$, cf.\ Section~\ref{sec:sharpness-proof}. 
Assume that $N=\infty$ almost surely. By \cite[Proposition~8.32]{LP16}, 
any infinite open cluster has infinitely many ends, which particularly implies that 
\[
\P_{\alpha_c(\kappa),\kappa}\big(\lim_{r\to\infty}|N_r|=\infty\,\big|\,|\cC(0)|=\infty\big) =1.
\]
However, by Theorem~\ref{T:two_point_order} and the fact that $\lambda(\alpha_\#(\kappa),\kappa) = \frac{1}{d-1}$, 
\[
\sup_{r\geq 1} \E_{\alpha_c(\kappa), \kappa} \big[ |N_r| \big]  =  \sup_{r\geq 1} \E_{\alpha_\#(\kappa), \kappa} \big[ |N_r| \big] <\infty,
\]
which is in contradiction with the divergence of $|N_r|$. Therefore, the case $N = \infty$ is also impossible for $\alpha=\alpha_c(\kappa)$. \qed

\smallskip

\section{Critical exponents}\label{S:crit_exp}

In this section, we prove Theorem~\ref{thm:crit_expo}. We use the notation $f \asymp g$ to mean that both $f = O_{d,\kappa}(g)$ and $g = O_{d,\kappa}(f)$. The implicit constants in this notation depend on $d$ and $\kappa$, but not on $\alpha$. Furthermore, we write $c_d$ for a constant depending only on $d$ that may differ from line to line.

\subsection{Susceptibility for \texorpdfstring{$\kappa>\kappa_c$}{non-critical kappa}}

Fix $\kappa > \kappa_c$. By Theorem~\ref{thm:alpha-c-kappa}, $\alpha_c(\kappa) > 0$. 
By Theorem~\ref{T:two_point_order}, for any $\alpha<\alpha_c(\kappa)$, 
\[
\E_{\alpha,\kappa} \big[ |\cC(0)| \big] 
\asymp \psi \sum_{r =0}^\infty (d-1)^{r} \lambda(\alpha, \kappa)^r \asymp \frac{1}{1 - \lambda(\alpha, \kappa) (d-1)},
\]
where in the last step we used that $\psi\asymp 1$ for $\alpha$ bounded away from $0$ (see Remark~\ref{R:expansion_C}).
The function $\lambda(\alpha,\kappa)$ is differentiable in $\alpha$, and its derivative is obtained from \eqref{eq:two_point_order} by implicit differentiation. Thus, 
\[
\lambda(\alpha,\kappa) = \frac{1}{d-1} + \lambda'(\alpha_c(\kappa),\kappa)\big(\alpha-\alpha_c(\kappa)\big) + O_{d,\kappa}\big((\alpha-\alpha_c(\kappa))^2\big),\quad\text{as }\alpha\uparrow\alpha_c(\kappa).
\]
Plugging this into the above expression for the susceptibility gives \eqref{eq:suscep_expo}. \qed

\subsection{Percolation probability for \texorpdfstring{$\kappa>\kappa_c$}{non-critical kappa}}

Fix $\kappa>\kappa_c$ and let $\alpha>\alpha_c(\kappa)$. 

We begin with the proof of the lower bound in \eqref{eq:beta_non-crit}. Recall from Remark~\ref{R:expansion_C} that the $o_{\alpha, \kappa}(1)$ term in Theorem~\ref{T:two_point_order} can be made uniform. Therefore, from the argument around \eqref{eq:N_r_first_moment} and \eqref{eq:second-moment-upper}, we obtain that 
\[
\E_{\alpha, \kappa}[|N_r|] \geq \frac{1}{2}  \psi d (d-1)^{r-1} \lambda^r \quad\text{and}\quad
\E_{\alpha, \kappa}[|N_r|^2] \leq C(d-1)^{2r} \lambda^{2r},
\]
for $r$ large enough and 
\[
C = C(\alpha,\kappa) = c_d \psi^3(\alpha+\alpha^2 + \alpha^3)\frac{1}{1-\frac{1}{(d-1)\lambda}} \frac{1}{(1-\gamma)^3},
\]
as follows from \eqref{eq:double_boundary_sum}, \eqref{eq:3_point_single_3_cluster}, \eqref{eq:3_different_loops} and \eqref{eq:gamma_cubed}, where $\gamma = \tau_\kappa^2 \lambda^{-1}$ and $c_d$ is some constant depending only on $d$. 
By decreasing the value of $c_d$, we obtain with the Paley-Zygmund inequality that
\[
\theta(\alpha,\kappa) = \lim\limits_{r\to\infty} \P_{\alpha,\kappa}(|N_r|\geq 1) \geq  \frac{\E_{\alpha,\kappa}\big[|N_r|\big]^2}{\E_{\alpha,\kappa}\big[|N_r|^2\big]}\geq c_d  \frac{(1-\frac{1}{(d-1) \lambda})(1-\gamma)^3}{(\alpha + \alpha^2 + \alpha^3) \psi}\asymp 1-\frac{1}{(d-1) \lambda},
\]
since the terms $(1-\gamma)$ and $\psi$ are uniformly bounded for $\kappa > \kappa_c$ and $\alpha$ bounded away from $0$, see Remark~\ref{R:expansion_C}.
Lastly, as in the previous proof, we obtain that 
\begin{multline} \label{eq:expand_product}
1-\frac{1}{(d-1) \lambda(\alpha, \kappa)} \\
= 1-\frac{1}{(d-1) \big(\frac{1}{(d-1)} + \lambda'(\alpha_c(\kappa), \kappa)\big(\alpha-\alpha_c(\kappa)\big) + O((\alpha-\alpha_c(\kappa))^2 \big) }
\asymp \alpha-\alpha_c(\kappa),
\end{multline}
as $\alpha \downarrow \alpha_c(\kappa)$, which proves the lower bound in \eqref{eq:beta_non-crit}. 

\smallskip

We prove the upper bound in \eqref{eq:beta_non-crit} using a general result from \cite{Lyo92}, which relates percolation probability to the effective conductance of some electric network. We refer the reader to \cite{Lyo92} (and also to Chapters~5.3 and 5.4 of \cite{LP16}) for definitions.

For any adjacent vertices $v_{n-1}$ and $v_{n}$ at distance $n-1$ and $n$ from the origin, respectively, define the resistance of the edge $(v_{n-1},v_n)$ as
\[
r(v_{n-1}, v_{n}) =\frac{1}{\P_{\alpha, \kappa}(0 \leftrightarrow v_{n})} - \frac{1}{\P_{\alpha, \kappa}(0 \leftrightarrow v_{n-1})} 
\]
and let $C_{\text{eff}}(0 \leftrightarrow \infty)$ be the effective conductance from $0$ to infinity in the network with these resistances. 
The following lemma follows from \cite[Theorem~2.4]{Lyo92}. 
\begin{lemma}\label{l:Lyons}
For two vertices $x,y \in V(\mathbb{T}_d)$, denote by $x \wedge y$ their common ancestor. If there exists $M > 0$ such that for any $x,y \in V(\mathbb{T}_d)$ and any vertex set $A$ with the property that the removal of $x \wedge y$ would disconnect $x$ from $A$, 
\[
    \P_{\alpha, \kappa}( 0 \leftrightarrow x \mid 0 \leftrightarrow y, 0 \centernot\leftrightarrow A) \geq M  \P_{\alpha, \kappa}( 0 \leftrightarrow x \mid 0 \leftrightarrow x \wedge y),
\]
then 
\[
\theta(\alpha, \kappa) \leq \frac{4}{M} \frac{C_{\text{eff}}(0 \leftrightarrow \infty)}{1+C_{\text{eff}}(0 \leftrightarrow \infty)}.
\]
\end{lemma}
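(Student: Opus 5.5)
The plan is to verify that Lemma~\ref{l:Lyons} is a special case of \cite[Theorem~2.4]{Lyo92} once the loop percolation is viewed as a percolation on the tree. To make that identification self-contained, I would also reproduce Lyons' first-moment argument.

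\textbf{Step 1: the cluster is a random subtree.} On $\mathbb T_d$, a loop visiting two vertices also visits every vertex of the geodesic between them. Hence $\cC(0)$ is a connected subtree containing $0$, and $0\leftrightarrow y$ implies $0\leftrightarrow z$ for every $z$ on the geodesic $[0,y]$. In particular $\{0\leftrightarrow y\}\subset\{0\leftrightarrow x\wedge y\}$. So the law of $\cC(0)$ is a "percolation on a tree" in Lyons' general sense: a random connected subgraph containing the root, with no independence assumed. Lyons' Theorem~2.4 replaces independence by exactly a one-sided quasi-independence inequality of the type in the hypothesis. The edge resistances $r(v_{n-1},v_n)$ defined above are his canonical weights; note $r\ge0$ by the monotonicity just observed.

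\textbf{Step 2: first-moment argument for $\P(0\leftrightarrow\partial B(n))$.} Fix $n$ and a probability measure $\mu$ on $\partial B(n)$. Set
\[
Y=\sum_{y\in\partial B(n)}\mu(y)\,\frac{\mathds 1\{0\leftrightarrow y\}}{\P_{\alpha,\kappa}(0\leftrightarrow y)},
\]
so that $\E[Y]=1$. Order $\partial B(n)$ from left to right by a planar embedding, and on $\{0\leftrightarrow\partial B(n)\}$ let $X$ be the leftmost vertex of $N_n$. The event $\{X=x\}$ equals $\{0\leftrightarrow x,\,0\centernot\leftrightarrow A_x\}$, where $A_x$ is the set of vertices of $\partial B(n)$ to the left of $x$. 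For every $y$ to the right of $x$, removing $x\wedge y$ separates $x$ from $A_x$.

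I would apply the hypothesis with the roles of $x$ and $y$ swapped (it is symmetric in the common ancestor). Combined with $\{0\leftrightarrow y\}\subset\{0\leftrightarrow x\wedge y\}$, this gives
\[
\E[Y\mid X=x]\ \ge\ M\sum_{y}\mu(y)\frac{\P(0\leftrightarrow y\mid 0\leftrightarrow x\wedge y)}{\P(0\leftrightarrow y)}
= M\sum_y\frac{\mu(y)}{\P(0\leftrightarrow x\wedge y)}.
\]
Only $y$ to the right of $x$, or $y=x$, contribute. The remaining terms are handled symmetrically, either by a second ordering or by keeping only the right-hand terms, which costs a factor $2$. Then $1=\E[Y]\ge \P(0\leftrightarrow\partial B(n))\min_x\E[Y\mid X=x]$.

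\textbf{Step 3: choice of $\mu$ and comparison with conductance.} I would take $\mu$ to be the unit current flow from $0$ to $\partial B(n)$ in the network. Then $\sum_y \mu(y)/\P(0\leftrightarrow x\wedge y)$ is $1$ (the root term $1/\P(0\leftrightarrow0)$) plus the sum of resistances along $[0,x\wedge y]$ averaged against $\mu$. This is the voltage-type quantity equal to $1+\mathcal R(0\leftrightarrow\partial B(n))$ along the flow. A lower bound of $\tfrac12(1+R_{\rm eff})$ uniformly in $x$ follows from the standard left/right splitting of the flow. Combining the factors gives
\[
\P(0\leftrightarrow\partial B(n))\le \frac{4}{M}\,\frac{1}{1+R_{\rm eff}(0\leftrightarrow\partial B(n))}=\frac4M\,\frac{C_n}{1+C_n},
\]
where $C_n=1/R_{\rm eff}(0\leftrightarrow\partial B(n))$ is the effective conductance; the equality is just this rewriting. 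Letting $n\to\infty$, with $C_n\downarrow C_{\rm eff}(0\leftrightarrow\infty)$, yields the claim.

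\textbf{Main obstacle.} The delicate point is Step 3: obtaining the uniform-in-$x$ lower bound on $\sum_y\mu(y)/\P(0\leftrightarrow x\wedge y)$ for the current flow, and tracking the constant $4$. Here I would follow Lyons' proof verbatim rather than re-derive it, checking only that it uses nothing beyond the conditional inequality and the subtree property from Step 1.
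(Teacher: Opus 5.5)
Your top-level plan matches the paper, which gives no argument beyond invoking \cite[Theorem~2.4]{Lyo92}. Step~1 is correct, and so is the inequality in Step~2, $\E[Y\mid X=x]\ge M\sum_{y\succeq x}\mu(y)/\P(0\leftrightarrow x\wedge y)$: removing $x\wedge y$ does separate $y\succ x$ from the vertices left of $x$. Your self-contained reconstruction, however, fails at Step~3. For a fixed $\mu$ such as the unit current flow, there is no lower bound on $\E[Y\mid X=x]$ of order $1+R_{\mathrm{eff}}$ that is uniform in $x$. So $1\ge\P(0\leftrightarrow\partial B(n))\min_x\E[Y\mid X=x]$ gives nothing. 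Concretely, on $\mathbb T_d$ the current flow is uniform on $\partial B(n)$ by symmetry. If $x$ is the rightmost vertex of $\partial B(n)$, then $\{X=x\}$ forces $N_n=\{x\}$, so
\[
\E[Y\mid X=x]=\frac{\mu(x)}{\P(0\leftrightarrow x)}=\frac{1}{d(d-1)^{n-1}\P(0\leftrightarrow x_n)}\asymp\big((d-1)\lambda\big)^{-n}\to 0 .
\]
This happens precisely in the regime $(d-1)\lambda>1$ where the lemma is applied, while $1+R_{\mathrm{eff}}\ge 1$. No ``left/right splitting'' of the flow can repair this pointwise, because at the extreme vertices one side is empty. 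A second ordering does not help either: the bounds for the leftmost and the rightmost connected points are conditional on different events with different laws, so the two one-sided sums cannot be added at the same $x$.

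The missing idea is to average against the law of $X$ itself instead of taking a minimum over $x$. Put $\sigma(x)=\P(X=x)$, so $\|\sigma\|=\P(0\leftrightarrow\partial B(n))$. Write $K(x,y)=1/\P(0\leftrightarrow x\wedge y)$ and use $\mu=\sigma/\|\sigma\|$ in $Y$. Your Step~2 then gives
\[
1=\E[Y]\ge\sum_x\sigma(x)\,\E[Y\mid X=x]\ge\frac{M}{\|\sigma\|}\sum_x\sigma(x)\sum_{y\succeq x}\sigma(y)K(x,y)\ge\frac{M}{2\|\sigma\|}\sum_{x,y}\sigma(x)\sigma(y)K(x,y).
\]
Here the factor $2$ comes from the symmetry of $K$, which is legitimate only because the same measure sits on both sides. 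Since $K(x,y)=1+\sum_{e\in[0,x\wedge y]}r(e)$, the double sum equals $\|\sigma\|^2+\sum_e r(e)\theta_\sigma(e)^2$, where $\theta_\sigma(e)$ is the $\sigma$-mass below $e$, a flow of strength $\|\sigma\|$. Thomson's principle bounds this below by $\|\sigma\|^2\big(1+R_{\mathrm{eff}}(0\leftrightarrow\partial B(n))\big)$. Hence $\P(0\leftrightarrow\partial B(n))\le\frac{2}{M}\frac{C_n}{1+C_n}$, and your limit $n\to\infty$ gives the lemma, even with $2/M$ in place of $4/M$.

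Equivalently, this is the energy argument for the hitting law of the leftmost connected point. The hypothesis, applied with conditioning on $\{X=y\}$ and $A$ the vertices left of $y$, gives $\sigma(y)K(x,y)\le M^{-1}\P(X=y\mid 0\leftrightarrow x)$ for $y\preceq x$. Disjointness of the events $\{X=y\}$ then yields $\sum_{y\preceq x}\sigma(y)K(x,y)\le 1/M$.
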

By Remark~\ref{R:expansion_C}, the $o_{\alpha, \kappa}(1)$ term in Theorem~\ref{T:two_point_order} can be made uniform, thus 
\[
r(v_{n-1}, v_{n}) = \frac{1}{\P_{\alpha, \kappa}(0 \leftrightarrow v_{n})} \big(1 -\frac{\P_{\alpha, \kappa}(0 \leftrightarrow v_{n})}{\P_{\alpha, \kappa}(0 \leftrightarrow v_{n-1})} \big) \asymp \lambda^{-n}.
\]
Hence, the effective conductance $C_{\text{eff}}(0 \leftrightarrow \infty)$ satisfies
\begin{equation} \label{eq:C_eff}
    C_{\text{eff}}(0 \leftrightarrow \infty) \asymp \Big( \sum_{n=1}^\infty  \frac{r(v_{n-1}, v_{n})}{(d-1)^n} \Big)^{-1} \asymp 1 - \frac{1}{(d-1) \lambda},
\end{equation}
which, as seen in \eqref{eq:expand_product}, is of order $\alpha - \alpha_c(\kappa)$ as $\alpha \downarrow \alpha_c(\kappa)$. Together with Lemma~\ref{l:Lyons}, this implies the mean-field upper bound in \eqref{eq:beta_non-crit}. Thus, it suffices to prove the assumption of Lemma~\ref{l:Lyons}. 

\smallskip

Fix vertices $x,y$ and set $A$ as in Lemma~\ref{l:Lyons} and let $z$ be the first vertex on the geodesic from $x \wedge y$ to $x$. If the loop $\ell_{x\wedge y, z}$ consisting of just the edge between $x \wedge y$ and $z$ is open, and $z$ is connected to $x$ using loops not containing $x \wedge y$, then $x \wedge y$ is connected to $x$. Furthermore, the occurrence of these two events is independent of one another and of the events $\{ 0 \leftrightarrow y\}$ and $\{ 0 \centernot \leftrightarrow A\}$ since they depend on loops intersecting different vertex sets. Thus,
\[
    \P_{\alpha, \kappa}( 0 \leftrightarrow x \mid 0 \leftrightarrow y, 0 \centernot\leftrightarrow A) \geq \P_{\alpha, \kappa}( \ell_{x\wedge y, z} \text{ is open}) \P_{\alpha, \kappa}( z \xleftrightarrow{\mathbb T_d\setminus \{x \wedge y\}} x ).
\]
As $\alpha \geq \alpha_c(\kappa) > 0$, the probability of the loop $\ell_{x\wedge y, z}$ being open is bounded away from $0$. Furthermore,
\[
    \P_{\alpha, \kappa}( z \xleftrightarrow{\mathbb T_d\setminus \{x \wedge y\}} x ) \geq \P_{\alpha, \kappa}( z \xleftrightarrow{} x \mid z \centernot\leftrightarrow x \wedge y ) \P_{\alpha, \kappa}(z \centernot\leftrightarrow x \wedge y ).
\]
For, say, $\alpha \leq 2 \alpha_c(\kappa)$, the probability that the edge between $z$ and $x \wedge y$ is not open is bounded from below by some constant depending only on $\kappa$ and $d$. Lastly, by Proposition~\ref{P:xi_order},
\[
    \P_{\alpha, \kappa}( z \xleftrightarrow{} x \mid z \centernot\leftrightarrow x \wedge y ) = Q(0 \leftrightarrow x_{d(z,x)})\asymp \lambda^{d(z,x)} \asymp \lambda^{d(x \wedge y,x)}
\]
so that there exists a constant $c_{d,\kappa}$ depending only on $d$ and $\kappa$ with 
\[
    \P_{\alpha, \kappa}( 0 \leftrightarrow x \mid 0 \leftrightarrow y, 0 \centernot\leftrightarrow A) \geq c_{d,\kappa} \lambda^{d(x \wedge y, x)} \asymp \P_{\alpha, \kappa}( 0 \leftrightarrow x \mid 0 \leftrightarrow x \wedge y).
\]
Thus, the assumption of Lemma~\ref{l:Lyons} is verified. \qed

\subsection{Percolation probability for \texorpdfstring{$\kappa=\kappa_c$}{critical kappa}}

We begin with the proof of the lower bound in \eqref{eq:beta-crit}. We will use the second moment method, as in the case $\kappa>\kappa_c$, however for $\kappa = \kappa_c$ and $\alpha \downarrow \alpha_c(\kappa_c) = 0$ the constant $\psi$ and the 
$o_{\alpha, \kappa}(1)$ term in Theorem~\ref{T:two_point_order} are no longer uniform. 
We begin with two lemmas, which establish the order of $\lambda$ and $\psi$ as $\alpha\downarrow 0$. 
\begin{lemma}\label{l:lambda-kappa-critical}
For $\alpha>0$, 
\begin{equation}\label{eq:crit_lambda_asymp}
\lambda(\alpha,\kappa_c) - \frac1{d-1}\asymp \alpha.
\end{equation}
\end{lemma}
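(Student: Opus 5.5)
The plan is to read \eqref{eq:crit_lambda_asymp} off directly from the characterization \eqref{eq:two_point_order} of $\lambda$ in Theorem~\ref{T:two_point_order}, specialized to $\kappa=\kappa_c$. Since $\lambda(\alpha,\kappa_c)\to 1$ as $\alpha\to\infty$ while $\alpha$ grows, the statement can only hold for $\alpha$ in a bounded range; I will prove it for $\alpha\in(0,A]$ with a fixed $A$, the essential case being $\alpha\downarrow 0$. Write $s=d-1$, so that $\tau_{\kappa_c}^2=1/s$.

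\textbf{Step 1: reparametrize.} Set $z=\lambda^{-1}$. Since $\lambda\in(\tau_{\kappa_c}^2,1)$, we have $z\in(1,s)$. Write $z=s(1-\delta)$ with $\delta\in(0,1-1/s)$. Then
\[
\lambda-\tfrac1s=\tfrac1s\Big(\tfrac{1}{1-\delta}-1\Big)\asymp\delta .
\]
Hence it suffices to show $\delta\asymp\alpha$. In these variables \eqref{eq:two_point_order} reads
\[
(z-1)\,S(\alpha,z)=1,\qquad S(\alpha,z):=\sum_{n\ge0}\Big((1-s^{-(n+1)})^{-\alpha}-1\Big)z^n .
\]

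\textbf{Step 2: sandwich $S$ between explicit sums.} Assume first $\alpha\le1$. Every $x=s^{-(n+1)}$ satisfies $x\le 1/2$. I will use $\alpha x\le(1-x)^{-\alpha}-1\le\alpha x+2\alpha x^2$, the inequality already used in the proof of Theorem~\ref{thm:regularity}. The leading part is
\[
\sum_n \alpha s^{-(n+1)}z^n=\frac{\alpha}{s}\sum_n(1-\delta)^n=\frac{\alpha}{s\delta}.
\]
The correction is
\[
2\alpha\sum_n s^{-2(n+1)}z^n\le 2\alpha s^{-2}\sum_n s^{-n}=O_d(\alpha),
\]
uniformly in $z<s$. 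Hence
\[
\frac{\alpha}{s\delta}\le S(\alpha,z)\le\frac{\alpha}{s\delta}+C_d\alpha .
\]

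\textbf{Step 3: keep $z$ away from $1$.} This is the main point, since otherwise the factor $z-1$ could absorb the smallness. Suppose $z\le(s+1)/2$. Then $\delta\ge c_d>0$, so $S\le C'_d\alpha$, and the equation gives $1\le (s-1)C'_d\alpha$. This is impossible for $\alpha<\alpha_0(d)$. Thus for small $\alpha$ we have $z-1\ge(s-1)/2$, while always $z-1\le s-1$.

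\textbf{Step 4: conclude for small $\alpha$.} From $1=(z-1)S\ge(z-1)\alpha/(s\delta)$ we get
\[
\delta\ge\frac{(s-1)\alpha}{2s}.
\]
From $1\le(s-1)\big(\alpha/(s\delta)+C_d\alpha\big)$ we get
\[
\delta\le\frac{(s-1)\alpha}{s\,(1-(s-1)C_d\alpha)}\le C\alpha
\]
for $\alpha$ small. Hence $\delta\asymp\alpha$ on $(0,\alpha_0]$.

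\textbf{Step 5: the remaining range $[\alpha_0,A]$.} By Theorem~\ref{T:two_point_order}, $\lambda$ is monotone increasing in $\alpha$. It is also strictly larger than $\tau_{\kappa_c}^2=1/s$ and strictly less than $1$. Therefore $\lambda-1/s$ is bounded above and below by positive constants on $[\alpha_0,A]$, and so is $\alpha$. This gives \eqref{eq:crit_lambda_asymp} on all of $(0,A]$.
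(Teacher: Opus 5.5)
Your proof is correct and follows essentially the paper's route. Both arguments linearize the summands of \eqref{eq:two_point_order} in $\alpha$ at $\tau_{\kappa_c}^2=1/(d-1)$, sum the resulting geometric series in $\lambda^{-1}/(d-1)$, and restrict to bounded $\alpha$; the paper takes $\alpha\le 1$, and you rightly note that some bound on $\alpha$ is necessary. The only difference is that the paper writes the equation in the telescoped form $1=\sum_{n\ge1}(q_{n-1}-q_n)\lambda^{-n}$ with $q_{n-1}-q_n\asymp\alpha(d-1)^{-n}$, which absorbs the factor $z-1$ and so makes your Step~3, the a priori bound keeping $z$ away from $1$, unnecessary.
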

\begin{proof}
Recall the expression for $q_n$ in \eqref{eq:def_q_n} and that $Q(z) = \sum_{n\geq 0} q_n z^n$. For $\alpha$ bounded from above, say, $\alpha \leq 1$, by the mean value theorem, 
\begin{equation}
    q_{n-1} - q_n \asymp \alpha \tau^{2n}_{\kappa_c} = \frac{\alpha}{(d-1)^{n}} \qquad \forall n \geq 1. \label{eq:q-diff-crit}
\end{equation}
The expression for the analytic extension of $Q(z)$ in \eqref{eq:def_tilde_Q} gives
\[
(1-z)\widetilde{Q}(z) = q^{*} + (1-z) H(z) = 1- \sum_{n \geq 1} (q_{n-1}-q_{n})z^n.
\]
In particular, since $\lambda^{-1}$ is the unique zero of $\widetilde{Q}(z)$, we obtain
\[
1 = \sum_{n \geq 1} (q_{n-1}-q_{n})\lambda^{-n} \asymp \alpha \sum_{n\geq 1} \frac{1}{(d-1)^n\lambda^n} = \alpha \Big(\frac{1}{1 - \frac{1}{(d-1)\lambda}} -1 \Big).
\]
Hence $1-\frac{1}{(d-1) \lambda} \asymp\alpha$ and the result follows. 
\end{proof}

\begin{lemma}\label{L:two_point_critical}
    There exists a constant $c_{d}$ depending only on $d$ such that
\[
\frac{1}{c_{d}} \alpha \lambda^r \leq\P_{\alpha, \kappa_c}(0 \leftrightarrow x_r) \leq c_{d} \alpha  \lambda^r,
\]
for all $r \geq 1$ and $0 \leq \alpha \leq 1$.
\end{lemma}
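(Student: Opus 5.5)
The plan is to go through the renewal structure of Section~\ref{sec:decay_rate} rather than the residue asymptotics of Proposition~\ref{P:xi_order}, because those asymptotics are not uniform as $\alpha\downarrow 0$. Throughout write $\lambda=\lambda(\alpha,\kappa_c)$ and $\tau=\tau_{\kappa_c}=(d-1)^{-1/2}$.

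\emph{Step 1: reduce to a tail bound under $Q$.} By \eqref{eq:P_Q_sum} and \eqref{eq:single_loop_LS},
\[
\P_{\alpha,\kappa_c}(0\leftrightarrow x_r)=(1-\tau^2)^\alpha\sum_{k\ge r}Q(\xi_1\ge k+1).
\]
For $\alpha\le 1$ the prefactor is $\asymp 1$. By Lemma~\ref{l:lambda-kappa-critical} we have $\lambda\in(\frac1{d-1},1)$, and $\lambda$ stays bounded away from $1$ uniformly in $\alpha\le1$. It therefore suffices to prove
\[
t_n:=Q(\xi_1>n)\asymp \alpha\lambda^n\quad\text{for all } n\ge1,
\]
with constants depending only on $d$. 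Summing this geometric bound over $k\ge r$ then gives the claim.

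\emph{Step 2: a renewal equation for $t_n$.} By \eqref{eq:powerseries_FQ} and the identity used in the proof of Lemma~\ref{l:lambda-kappa-critical}, for $|z|<1$,
\[
\sum_{n\ge0}t_nz^n=\frac{1-F(z)}{1-z}=\frac{1}{(1-z)Q(z)}=\frac{1}{1-\sum_{m\ge1}a_mz^m},\qquad a_m:=q_{m-1}-q_m .
\]
Equivalently, $t_0=1$ and $t_n=\sum_{m=1}^n a_mt_{n-m}$. Now tilt exponentially: set $b_m=a_m\lambda^{-m}$ and $s_n=t_n\lambda^{-n}$. Since $\lambda^{-1}$ is the zero of $\widetilde Q$, we have $\sum_m b_m=1$. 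Hence $(b_m)$ is a probability distribution on $\{1,2,\dots\}$, and $(s_n)$ is exactly its renewal sequence: $s_n$ is the probability that a renewal process with inter-arrival law $b$ has a renewal at time $n$. By \eqref{eq:q-diff-crit},
\[
b_m\asymp \alpha\rho^m,\qquad \rho:=\frac{1}{(d-1)\lambda}.
\]
By Lemma~\ref{l:lambda-kappa-critical}, $1-\rho\asymp\alpha$.

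\emph{Step 3: uniform bound on $s_n$ via hazard rates.} This is the main obstacle: we need $s_n\asymp\alpha$ for every $n\ge1$, not just in the limit. The renewal theorem only gives $s_n\to1/\mu$, where $\mu=\sum_m mb_m\asymp\alpha\rho/(1-\rho)^2\asymp\alpha^{-1}$, and this is not uniform in $n$. Instead I will use the hazard rates
\[
h_k=\frac{b_k}{\sum_{m\ge k}b_m}.
\]
Since $b_m\asymp\alpha\rho^m$, the tail satisfies $\sum_{m\ge k}b_m\asymp\alpha\rho^k/(1-\rho)$. Consequently $h_k\asymp 1-\rho\asymp\alpha$, uniformly in $k$.

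Decompose according to the last renewal $j<n$ before time $n$:
\[
s_n=\sum_{0\le j<n}s_j\,\Big(\sum_{m\ge n-j}b_m\Big)\,h_{n-j}.
\]
The weights $s_j\sum_{m\ge n-j}b_m$ are the probabilities that the interval covering time $n-1$ starts at $j$, so they sum to $1$. Thus $s_n$ is a convex combination of the $h_k$, which gives
\[
\min_k h_k\le s_n\le\max_k h_k,
\]
and hence $s_n\asymp\alpha$ for all $n\ge1$. Undoing the tilt yields $t_n\asymp\alpha\lambda^n$. Combined with Step~1, this proves the lemma for $0<\alpha\le1$; the case $\alpha=0$ is trivial.
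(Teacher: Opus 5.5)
Your argument is correct and is essentially the paper's proof. The paper uses the same recursion $Q(0\leftrightarrow x_r)=\sum_{n=1}^r(q_{n-1}-q_n)\,Q(0\leftrightarrow x_{r-n})$ together with $\sum_{n\ge1}(q_{n-1}-q_n)\lambda^{-n}=1$, and closes an induction whose inductive step is exactly the requirement $h_r\le c_d\alpha$; your exponential tilt plus convex-combination identity is a cleaner packaging of that induction that gives both bounds at once.

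One small correction: the uniform bound $\lambda\le 1-c$ for $\alpha\le 1$ does not follow from Lemma~\ref{l:lambda-kappa-critical} alone. It follows from the monotonicity of $\lambda$ in $\alpha$ in Theorem~\ref{T:two_point_order}, which gives $\lambda(\alpha,\kappa_c)\le\lambda(1,\kappa_c)<1$.
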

\begin{proof}
We use the notation from the proof of Theorem~\ref{T:two_point_order}. 
From the definition of the first renewal step, 
\begin{align*}
    \sum_{r \geq 0} Q(0 \leftrightarrow x_r) z^r &= \sum_{r \geq 0} \sum_{n \geq r+1}Q(\xi_1 =n) z^r = \sum_{n\geq 1} Q(\xi_1 =n) \sum_{r =0}^{n-1} z^r \\
    &=  \sum_{n\geq 1} Q(\xi_1 =n) \frac{1-z^n}{1-z}  = \frac{1-F(z)}{1-z} = \frac{1}{(1-z)Q(z)},
\end{align*}
or, equivalently,
\begin{equation} \label{eq:compare_coeff}
     (1-z)Q(z)\sum_{r \geq 0} Q(0 \leftrightarrow x_r) z^r =1.
\end{equation}
Comparing coefficients of $z^r$ in \eqref{eq:compare_coeff} yields for $r \geq 1$
\[
Q(0 \leftrightarrow x_r) = \sum_{n=1}^r (q_{n-1} - q_n) Q(0 \leftrightarrow x_{r-n}) = q_{r-1} - q_{r} + \sum_{n=1}^{r-1} (q_{n-1} - q_n) Q(0 \leftrightarrow x_{r-n}).
\]
We claim by induction that $Q(0 \leftrightarrow x_r) \leq c_{d} \alpha  \lambda^r$ whenever $c_d$ is large enough, where the $r=1$ case is obvious. 
Namely, from \eqref{eq:q-diff-crit} and \eqref{eq:crit_lambda_asymp} we obtain
\begin{align}
     Q(0 \leftrightarrow x_{r}) &\leq q_{r-1} - q_{r} + c_d \alpha \lambda^{r} \Big(\sum_{n = 1}^{r-1} (q_{n-1} - q_n) \lambda^{-n} \Big) \nonumber\\
     &= q_{r-1} - q_{r} + c_d \alpha \lambda^{r} \Big(1 -\sum_{n\geq r} (q_{n-1} - q_n) \lambda^{-n} \Big) \nonumber \\
     &\leq c_d \alpha \lambda^{r}+C \frac{\alpha}{(d-1)^{r}} - C' c_d \alpha^2 \lambda \frac{(d-1)^{-r}}{(d-1)\lambda - 1} \nonumber \\
     &\leq c_d \alpha \lambda^{r}+C \frac{\alpha}{(d-1)^{r}} - C'' c_d \lambda \frac{\alpha }{(d-1)^{r}}, \label{eq:induction_bound}
\end{align}
where $C, C', C'' > 0$ are some constants depending only on $d$. As $\lambda $ is bounded away from $0$, we may choose $c_d$ large enough such that the sum of the last two terms in \eqref{eq:induction_bound} is less than $0$ for any $r \geq 1$. Using \eqref{eq:P_Q_sum}, and by possibly increasing the constant $c_d$, one can extend this to the required upper bound on $\P_{\alpha, \kappa_c}(0 \leftrightarrow x_r)$. The lower bound on $\P_{\alpha, \kappa_c}(0 \leftrightarrow x_r)$ follows in a similar way.
\end{proof}

\smallskip

We proceed with the proof of the lower bound in \eqref{eq:beta-crit}. 
As in the proof of the lower bound for $\kappa>\kappa_c$,
with Lemma~\ref{L:two_point_critical} replacing the role of Theorem~\ref{T:two_point_order} (so that $\psi$ is replaced with a constant multiple of $\alpha$), we obtain that 
\[
\E_{\alpha, \kappa_c}[|N_r|] \geq \frac{\alpha}{c_d}  (d-1)^{r} \lambda^r \quad\text{and}\quad 
\E_{\alpha, \kappa_c}[|N_r|^2] \leq C(d-1)^{2r} \lambda^{2r},
\]
for $r$ large enough and 
\[
C = C(\alpha) = c_d \alpha^4 \frac{1}{1-\frac{1}{(d-1)\lambda}} \frac{1}{(1-\gamma)^3},
\]
where $1- \gamma = 1 - \tau_{\kappa_c}^2 \lambda^{-1} = 1 - \frac{1}{(d-1)\lambda}$. By Lemma~\ref{l:lambda-kappa-critical}, we obtain by enlarging $c_d$ that 
\[
\E_{\alpha, \kappa_c}[|N_r|^2] \leq c_d(d-1)^{2r} \lambda^{2r}.
\]
Hence, by the Paley-Zygmund inequality,
\[
\theta(\alpha,\kappa_c) = \lim\limits_{r\to\infty}\P_{\alpha,\kappa_c}\big(|N_r|\geq 1\big) \geq \frac{\E_{\alpha,\kappa_c}\big[|N_r|\big]^2}{\E_{\alpha,\kappa_c}\big[|N_r|^2\big]}
\geq c_d^{-3}\alpha^2,
\]
which completes the proof of the lower bound in \eqref{eq:beta-crit}.

\smallskip

We proceed with the proof of the upper bound in \eqref{eq:beta-crit}. The application of Lemma~\ref{l:Lyons} does not give the right decay in the case $\kappa=\kappa_c$: even though the effective conductance $C_{\text{eff}}(0 \leftrightarrow \infty)$ is of order $\alpha^2$, we do not have a uniform control in $\alpha$ of the constant $M$. Therefore, we use a different approach here. 
Recall the definition of $A_r$ from \eqref{eq:def_A_r} and note that for $\kappa=\kappa_c$, 
\[
A_r = 2 + \frac{d-2}{d-1}(r-1) \asymp r.
\]

By Lemma~\ref{L:single_loop_to_boundary}, the measure of loops intersecting $0$ and the boundary $\partial B(r)$ satisfies
\[
\mu_{\kappa_c}(\ell : 0 \in \ell, \ell \cap \partial B(r) \neq \emptyset) = - \log( 1 - \frac{\tau^{2r}_{\kappa_c} d(d-1)^{r-1}}{A_r}).
\]
In particular, using that $\tau^2_{\kappa_c} (d-1) = 1$, there exists a constant $c_d$ such that for $r$ large enough
\begin{equation}
\mu_{\kappa_c}(\ell : 0 \in \ell, \ell \cap \partial B(r) \neq \emptyset) \leq - \log(1 - c_d r^{-1}). \label{eq:bound_0_B(r)_single}
\end{equation}
For $L \in \N$, denote by $\cC_{\leq L}(0)$ the open cluster of $0$ using only loops of diameter at most $L$. Further, let $\cC_{\leq L}^1(0) \subseteq \cC_{\leq L}(0)$ be those vertices (other than $0$) that are connected to $0$ by a single loop of diameter at most $L$. Lemma~\ref{L:single_loop} gives
\begin{align*}
    \E_{\alpha, \kappa_c} \big[ | \cC_{\leq L}^1(0)| \big] &= \sum_{k=1}^L \sum_{v \in \partial B(k)} \P(\exists \ell \in \mathcal{L}_{\alpha, \kappa_c}\text{ with }0,v \in \ell, \, \text{diam}(\ell) \leq L) \\
    &\leq \sum_{k=1}^L d(d-1)^{k-1} \P(\exists \ell \in \mathcal{L}_{\alpha, \kappa_c} \, : \, 0, v \in \ell) \\
    &\leq 2 d \alpha \sum_{k=1}^{L}(d-1)^{k-1} (d-1)^{-k} =  2 \alpha L \frac{d}{d-1}.
\end{align*}
As similarly observed in \cite[Section~5]{CS16}, the size of $\cC_{\leq L}(0)$ is stochastically dominated by a Galton-Watson process with the offspring distribution given by the size of $\cC_{\leq L}^1(0)$. 
For
\[
L = \lfloor (d-1)/4d\alpha \rfloor,
\]
the Galton-Watson process is subcritical with mean offspring at most $1/2$. This implies that $\cC_{\leq L}(0)$ is almost surely finite with $\E[|\cC_{\leq L}(0)|]\leq 2$.

If $0$ is in an infinite cluster but $\cC_{\leq L}(0)$ is finite, there must be at least one vertex in $\cC_{\leq L}(0)$ contained in a loop of diameter at least $L+1$. By conditioning on $\mathcal{F}_{\leq L}$, the sigma-algebra generated by loops of diameter at most $L$, we see from \eqref{eq:bound_0_B(r)_single} that
\begin{align*}
    \P_{\alpha, \kappa_c}( |\mathcal C(0)|=\infty \mid \mathcal{F}_{\leq L}) &\leq \sum_{v \in \cC_{\leq L}(0)} \P( \exists \ell \in \mathcal{L}_{\alpha, \kappa_c}  \text{ with } v \in \ell, \, \text{diam}(\ell) > L) \\
    &\leq |\cC_{\leq L}(0)| \P( \exists \ell \in \mathcal{L}_{\alpha, \kappa_c}  \text{ with } 0 \in \ell, \, \ell\cap\partial B(\lfloor L/2\rfloor)\neq \emptyset)\\
    &\leq  |\cC_{\leq L}(0)| (1 - (1- c_d'\alpha)^\alpha),
\end{align*}
where $c_d'$ is some constant depending only on $d$. Using the bound $1-(1-x)^{\alpha} \leq 2x\alpha$ (if $\alpha$ is small enough), and then taking expectations, yields 
\[
\theta(\alpha, \kappa_c) \leq 4 c_d' \alpha^2,
\]
as required.\qed

\bibliographystyle{alpha}
\bibliography{loop_bib} 

\end{document}